\renewcommand*\env@matrix[1][*\c@MaxMatrixCols c]{%
  \hskip -\arraycolsep
  \let\@ifnextchar\new@ifnextchar
  \array{#1}}
\newcommand{\on}[1]{\operatorname{#1}}
\newcommand{\bb}[1]{{\mathbb{#1}}}
\newcommand{\ca}[1]{{\mathcal{#1}}}
\newcommand{\mf}[1]{{\mathfrak{#1}}}
\newcommand*\isom{\xrightarrow{\sim}}
\newcommand{\rr}{\bb{R}}
\newcommand{\pair}[1]{\langle#1\rangle}
\newcommand{\divisor}{\operatorname{div}}
\newcommand{\ord}{\operatorname{ord}}
\newcommand{\Pic}{\operatorname{Pic}}
\newcommand{\rmH}{H}
\newcommand{\Aut}{\operatorname{Aut}}
\newcommand{\Ker}{\operatorname{Ker}}
\newcommand{\Sp}{\operatorname{Sp}}
\newcommand{\GL}{\operatorname{GL}}
\newcommand{\spann}{\operatorname{span}}
\newcommand{\abs}[1]{\lvert#1\rvert}
\newcommand{\aabs}[1]{\lvert\lvert#1\rvert\rvert}
\newcommand{\sing}{^{\textrm{sing}}}
\newcommand{\ra}{\rightarrow}
\newcommand{\sub}{\subseteq}
\newcommand{\Lie}{\operatorname{Lie}}
\newcommand{\ov}[1]{\overline{#1}}
\newcommand{\wt}[1]{\widetilde{#1}}
\newcommand{\Rat}{\operatorname{Rat}}
\renewcommand\Im{\operatorname{Im}}
\DeclareMathOperator{\Id}{Id}
\def\qq{\mathbb{Q}}
\def\rr{\mathbb{R}}
\def\zz{\mathbb{Z}}
\def\cc{\mathbb{C}}
\def\mm{\mathcal{M}}
\def\hh{\mathcal{H}}
\def\QQ{\boldsymbol{Q}}
\def\hh{\boldsymbol{H}}
\def\VV{\mathcal{V}}
\def\HH{\mathcal{H}}
\def\WW{\mathcal{W}}
\def\ww{{\boldsymbol{W}}}
\def\oo{\mathcal{O}}
\def\pp{\mathcal{P}}
\def\ff{\mathcal{F}}
\def\deldelbar{\partial \bar{\partial}}
\def\Ybar{\ov Y}
\def\Xbar{\ov X}
\def\Gr{\mathrm{Gr}}
\def\defeq{\stackrel{\text{\tiny def}}{=}}
\newdefinition{definition}{Definition}[section]
\newdefinition{conjecture}[definition]{Conjecture}
\newdefinition{condition}[definition]{Condition}
\newdefinition{problem}[definition]{Problem}
\newdefinition{assumption}[definition]{Assumption}
\newtheorem{proposition}[definition]{Proposition}
\newtheorem{lemma}[definition]{Lemma}
\newtheorem{theorem}[definition]{Theorem}
\newtheorem{corollary}[definition]{Corollary}
\newtheorem{claim}[definition]{Claim}
\newtheorem{inclaim}{Claim}[definition]
\newtheorem{remark}[definition]{Remark}
\newtheorem{example}[definition]{Example}
\newproof{proof}{Proof}
\numberwithin{equation}{section}
\newcounter{nootje}
\newcommand{\Lear}[1]{\left[#1 \vphantom{{#1}^{\sum}}\right]} 
\newcommand{\mat}[1]{\left(\begin{matrix}#1\end{matrix}\right)}
\begin{document}

\authorheadline{J.~I.~Burgos Gil, D.~Holmes and R.~de~Jong}
\runningtitle{Singularities of the biextension metric for abelian
  varieties}

\begin{frontmatter}

\title{Singularities of the biextension metric for families of abelian varieties}

\author[1]{Jos\'e~Ignacio~Burgos~Gil}
\address[1]{Instituto de Ciencias Matem\'aticas (CSIC-UAM-UCM-UCM3),
  Calle Nicol\'as Ca\-bre\-ra~15, Campus UAM, Cantoblanco, 28049 Madrid,
  Spain
\ead{burgos@icmat.es}}

\author[2]{David~Holmes}
\address[2]{Mathematical Institute,  
Leiden University,
PO Box 9512, 
2300 RA Leiden, 
The Netherlands
\ead{holmesdst@math.leidenuniv.nl}}

\author[3]{Robin~de~Jong}
\address[3]{Mathematical Institute,  
Leiden University,
PO Box 9512, 
2300 RA Leiden, 
The Netherlands
\ead{rdejong@math.leidenuniv.nl}}

\received{}

\begin{abstract} In this paper we study the singularities of the
  invariant metric of the Poincar\'e bundle over a family of abelian
  varieties and their duals over a base of arbitrary dimension. As an
  application of this study we prove the effectiveness of the height
  jump divisors for families of pointed abelian varieties. The effectiveness
  of the height jump divisor was conjectured by Hain in the more
  general case of variations of polarized Hodge structures of weight
  $-1$.     
\end{abstract}

\MSC[2010]{14H10 (primary); 11G50; 14D07}

\end{frontmatter}


\section{Introduction}

\subsection{Families of curves}
\label{sec:families_curves}
By way of motivation of the general results in this paper, consider
the following situation.  Let $X$ be a smooth complex algebraic
variety of dimension $n$, and let $\pi \colon Y\to X$ be a family of
smooth projective curves parametrized by $X$. Let $A$, $B$ be two
relative degree zero divisors on $Y \to X$, with disjoint support.  To
these divisors we can associate a function $h\colon X\to \rr$, given
by the archimedean component of the N\'eron height pairing
\begin{displaymath}
  h(x)=\langle A_x,B_x \rangle_{\infty} \, ,
\end{displaymath}
where $x \in X$.
Let $X\hookrightarrow \ov X$ be a smooth compactification of $X$ with
$D=\ov X\setminus X$ a normal crossings divisor. We are interested in
the behavior of the function $h$ close to the boundary divisor
$D$. As is customary to do, we assume that the monodromy operators on
the homology of the fibers of $Y \to X$ about all irreducible
components of $D$ are unipotent. Let $x_{0}$ be a point of $\ov X$,
and $U\isom \Delta^n$ a small enough coordinate neighborhood of
$x_{0}$ such that $ D\cap U$ is given by $q_1\cdots q_k=0$. Thanks to
a result of Brosnan and Pearlstein \cite{brospearl} (see also
\cite{hdj} \cite{lear} for the case where $X$ has dimension~$1$), there exist a
continuous function $h_{0}\colon U\setminus D\sing\ra \rr$ and
rational numbers $f_{1},\dots, f_k$ such that on $U\setminus D$ the equality
\begin{equation} \label{eq:2}
  h(q_1,\dots,q_n)=h_{0}(q_1,\dots,q_n)-\sum_{i=1}^{k}f_{i}\log|q_{i}|
\end{equation}
holds. Since $h_{0}$ is continuous on $U\setminus D\sing$, this
determines the behavior of $h$ close to the smooth points of $D$. The
question remains what happens when we approach a point of $D\sing$. In
other words, what kind of singularities may $h_{0}$ have on $D\sing$?

From work by G. Pearlstein \cite{pearldiff} we find a more precise
statement. Let $x_0 \in \ov X$ be as above. Then there exists a 
homogeneous weight one function $f \in \qq(x_1,\ldots,x_k)$ such that
the following holds. Consider a holomorphic test curve
$\ov \phi \colon \ov C\to \ov X$ that has image not contained in $D$,
a point $0\in \ov C$ such that $\ov \phi(0)=x_0$, and a local analytic
coordinate $t$ for $\ov C$ close to $0$. Assume that $\ov \phi$ is
given locally by
\begin{displaymath}
  t\mapsto\big(t^{m_{1}}u_{1}(t),\dots,t^{m_{k}}u_{k}(t),q_{k+1}(t),
  \dots,q_{n}(t)\big), 
\end{displaymath}
where $m_{1},\dots,m_{k}$ are non-negative integers, $u_{1},\dots,u_{k}$
are invertible functions and $q_{k+1},\dots,q_{n}$ are arbitrary holomorphic
functions. Then the asymptotic estimate 
\begin{equation} \label{onevariableasympt}
 h(\ov \phi(t)) =  b'(t) - f(m_1,\ldots,m_k) \log|t| 
\end{equation}
holds in a neighborhood of $0 \in \ov C$. Here $b'$ is a continuous
function that extends continuously over $0$.  

Naively one might expect
that the function $f$ is linear and $f(m_1,\ldots,m_k)$ is just a
linear combination of the numbers 
$f_{i}$ with coefficients given by the multiplicities $m_{i}$ of the
curve $\ov C$. In general, however this turns out not to be the
case. Examples of non-linear $f$ can be found in \cite{bhdj} and
\cite{bkk}. In \cite{abbf}, \cite{bhdj} and \cite{hdj} one finds a
combinatorial interpretation of the function $f$ in terms of potential
theory on the dual graphs of stable curves.

As a special case of one of the main results of this paper we will
have a stronger asymptotic estimate. Namely
\[ h(q_1,\ldots,q_n) = b(q_1,\ldots,q_n) + f(-\log|q_1|,\ldots,-\log|q_k|)    \]
on $U \setminus D$, where $b \colon U \setminus D \to \rr$ is a
bounded continuous function that extends in a continuous manner over  
$U \setminus D^{\mathrm{sing}}$. The boundedness of $b$ can be seen as
a uniformity property on the asymptotic estimates for different test
curves. In general, as shown by Example \ref{exm:1} below, the function  $b$
can not be extended continuously to $D^{\mathrm{sing}}$, thus the
boundedness of $b$ is the strongest estimate that can be hoped for.

As a concrete example of the shape of the function $f$, consider 
the stable curve $Y_0$ obtained by glueing two projective lines
at zero and infinity, and marking the point $(1:1)$ in both components. Let
$\Ybar 
\to \Xbar$ be a versal deformation of $Y_0$. The locus in $\Xbar$
where the morphism $\Ybar \to \Xbar$ is not smooth is a normal crossings divisor, locally
defined by $q_1q_2 = 0$, say. The examples in \cite{bhdj} and \cite{bkk}
show that the function $f(x_1, x_2)$ is given, up to linear forms in
$x_1$ and $x_2$,  by $x_1x_2/(x_1 + x_2)$.

One may ask for further properties of $h$. For example, a result of T. Hayama
and G.~Pearlstein \cite[Theorem~1.18]{hp} implies that
$h$ is locally integrable. Another question is whether the same can be
said about the forms $\partial h$ and $\deldelbar h$ and their
powers. As seen in \cite{bkk} in a case where $\Xbar$ is two-dimensional this may lead
to interesting intersection numbers between infinite towers of
divisors.  We plan to address this question in full generality in a subsequent
work. In this paper we will focus on the one-dimensional case because
it is the only case needed to treat Conjecture \ref{con:2} below. Thus assume that the dimension of $X$ is one.  Let $h_{0}$ be 
the function appearing in equation \eqref{eq:2}. Then we prove that the 1-form
$\partial h_{0}$ is locally integrable on $U$ with zero
residue. Also the 2-form $\partial\bar \partial h_{0}$ is locally
integrable on $U$.

\subsection{Admissible variations of mixed Hodge structures}
\label{sec:vari-hodge-struct}

The correct general setting for approaching these issues is to
consider a variation of polarized pure Hodge structures $\hh$ of
weight $-1$ over $X$, see for instance \cite{hainbiext} and
\cite{hain_normal}. Let $\hh^\lor$ be the dual variation. Let
$J(\hh) \to X$ and $J(\hh^\lor) \to X$ be the corresponding families
of intermediate jacobians. Then on
$J(\hh) \underset{X}{\times } J(\hh^\vee)$ one has a Poincar\'e
(biextension) bundle $\ca{P}=\ca{P}(\hh)$ with its canonical
(biextension) metric. The polarization induces an isogeny of complex
tori $\lambda \colon J(\hh) \to J(\hh^\lor)$. Let
$\nu, \mu \colon X \to J(\hh)$ be two sections (with good behavior
near $D$,  more precisely \emph{admissible normal functions}). Then we define
\[ L = \ca{P}_{\nu,\mu} \defeq (\nu,\lambda \mu)^* \ca{P} \, , \]
as a metrized analytic line bundle on $X$. We put
$\ca{P}_\nu=\ca{P}_{\nu,\nu}$. This ``diagonal'' case will be of
special interest to us. One important example, discussed at length in
\cite{hain_normal} and \cite{brospearl}, is given by the normal
function in $J(\bigwedge^3
H_1(Y_x))=H_{3}(J(Y_{x}))$ associated to the Ceresa cycle
$[Y_{x}]-[-Y_{x}]$ in $J(Y_x)$, for a family of curves $Y \to X$.

A second example is provided by the sections determined by two
relative degree zero divisors $A, B$ on a family of smooth projective curves,
as above.  
Let $\hh$ be the variation of Hodge structure given by the homology of the fibers of
the family of curves $Y \to X$. Then $J(\hh)$ is the usual jacobian
fibration associated to $Y \to X$. It is principally polarized in a
canonical way.  The divisors $A, B$ give rise to sections $\nu, \mu$
of $J(\hh) \to X$. The Deligne pairing associates to the line bundles
$\ca{O}_Y(A)$ and $\ca{O}_Y(B)$ a line bundle $\pair{A, B}$ on $X$, in a
functorial and bi-multiplicative way, see \cite{de}. The line bundle $\pair{A,B}$ comes with a
canonical rational section $s_{A,B}$, as well as a canonical hermitian
metric $\aabs{\cdot}_{A,B}$. The metric on $\pair{A,B}$ is determined
by the archimedean height pairing. More precisely, we have the identity 
\begin{equation*}
h(x) = \pair{A_x,B_x}_\infty = -\log \big(\aabs{s_{A,B}}_{A,B}(x)\big)
\end{equation*}
for all $x \in X$. There is  a canonical isometry
\[ \pair{A,B}^{\otimes (-1)} \isom \ca{P}_{\nu,\mu} \, . \]
Thus the singularity near $x_0$ of the biextension metric of the local
rational section $s_{A,B}$ precisely
gives the singularity of the function $h$ near $x_0$ as discussed
above.

Returning to the general set-up, the result of Brosnan and Pearlstein
\cite[Theorems 24 and 79]{brospearl} is that some power $L^{\otimes N}$ extends as a
continuously metrized line bundle over $\ov{X} \setminus
D^{\mathrm{sing}}$. Here we need to impose the condition that the
monodromy operators on the fibers of $\hh$ about all irreducible
components of $D$ are unipotent.
Moreover, Theorem 233 and Remark 234 of \cite{brospearl} provide a
canonical extension of $L^{\otimes N}$ on $\Xbar
\setminus D^{\on{sing}}$ to an analytic line bundle over the
whole of $\Xbar$ (though the metric will in general not extend
continuously over $D^{\on{sing}}$). Note that if the line bundle
$L^{\otimes N}$ on $\Xbar \setminus D^{\on{sing}}$ is algebraic, then it has a unique
extension to an algebraic line bundle on $\Xbar$. 
We denote the resulting line
bundle on $\Xbar$ by $\Lear{L^{\otimes N},\aabs{-}}_{\ov
X}$. This extension is commonly known as the \emph{Lear extension} of $L^{\otimes N}$, though the first general proof of its existence is due to Brosnan and Pearlstein in \cite{brospearl}. In order to remove the dependence on the choice of $N$ we will adopt the formalism of $\qq$-line bundles, and consider 
the Lear extension $\Lear{L,\aabs{-}}_{\ov
X}=\frac{1}{N} \Lear{L^{\otimes N},\aabs{-}}_{\ov
X}$ as a $\qq$-line bundle on~$\Xbar$. 

We are interested in the behavior of the biextension
metric on $L$ when we approach a point $x_0$ in the
singular locus $D^{\mathrm{sing}}$. Let $s$ be a 
section of $L=\ca{P}_{\nu,\mu}$ on $U \cap X$ that corresponds to an
admissible biextension variation of mixed Hodge structures. Pearlstein 
\cite[Theorem~5.19]{pearldiff} has proved that there exists a homogeneous weight one
function $f_s \in \qq(x_1,\ldots,x_k)$ such that for each holomorphic
test curve $\ov \phi \colon \ov C \to \ov X$  as above the asymptotic estimate
\begin{equation} \label{asymp_pearl} -\log\|s(\ov \phi(t))\| = b'(t) -f_s(m_1,\ldots,m_k)\log|t| 
\end{equation}
holds in a neighborhood $V$ of $0 \in \ov C$, with $b'(t)$ continuous on $V$. 

Now assume that the polarized variation $\hh$ is  torsion-free and of type $(-1,0)$,
$(0,-1)$ over $X$, so that the family $J(\hh) \to X$ is a family of polarized
abelian varieties over $X$. Under this assumption we are able to strengthen the result of
Pearlstein's.

\subsection{Statement of the main results}\label{sec:statement_of_main}

Recall that we work with a smooth complex algebraic variety $X$, provided with a
partial compactification $\ov X$ with $D= \ov X\setminus X$ a normal crossings divisor, and
a polarized pure variation of Hodge structures $\hh$ of weight $-1$ over $X$. 

Let $(q_1,\ldots,q_n) \colon U \isom \Delta^n$ be a
coordinate chart on $\ov{X}$ such that $D \cap U = \{ q_1\cdots q_k =
0 \}$. Denote by $D_{i}$ the local component of $D$ with equation given by
$q_{i}=0$.  For any $ 0 < \epsilon < 1$ write  
\[ 
U_\epsilon = \{ (q_1,\ldots,q_n) \in U : 
\abs{q_i} < \epsilon \quad \textrm{for all} \quad i=1,\ldots,n \} \,
. 
\]
Note that $U_\epsilon \cap X$ is identified via the coordinate
chart with $(\Delta^*_\epsilon)^k \times \Delta_\epsilon^{n-k}$.

\begin{theorem}  \label{singbiext} Assume that $\hh$ is a
  variation of torsion-free polarized pure Hodge structures of type
  $(-1,0), (0,-1)$ on $X$. Assume that the monodromy operators on the
  fibers of $\hh$ about the irreducible components of $D$ are
  unipotent. Let
  $\nu, \mu \colon X \to J(\hh)$ be two admissible normal functions of $J(\hh)$ over $X$. Then there
  exist an integer $d$, a homogeneous polynomial $Q\in
  \bb{Z}[x_1,\ldots,x_k]$ of degree $d$ with no zeroes on
  $\bb{R}_{>0}^k$ and, for each section $s$ of
  $\ca{P}_{\nu,\mu}$ corresponding to an admissible biextension
  variation of mixed Hodge structures over $U \cap X$, a homogeneous polynomial $P_s\in
  \bb{Z}[x_1,\ldots,x_k]$ of degree $d+1$ such that the homogeneous
  weight one rational function 
  $f_s=P_s/Q$ satisfies the following properties.
  \begin{enumerate}
  \item For all $\epsilon \in \bb{R}_{>0}$ small enough, the
    function
    \[ b(q_1,\ldots,q_n)=-\log\aabs{s} - f_s(-\log|q_1|,\ldots,-\log|q_k|) \]
    is bounded on $U_\epsilon \cap X$ and extends continuously over
    $U_\epsilon \setminus D^{\mathrm{sing}}$.
  \item The function $f_{s}$ is uniquely determined by the previous
    property. Moreover, if $s'$ is another section of $\ca{P}_{\nu,\mu}$ over $U \cap X$, such that
    \begin{equation}\label{eq:12}
      \divisor(s'/s)=\sum_{i=1}^{k}a_{i}D_{i} \, ,
    \end{equation}
    then the difference
    \begin{displaymath}
      f_{s'}-f_{s}=\sum_{i=1}^{k}a_{i}(-\log|q_{i}|)
    \end{displaymath}
    is linear in the functions $-\log|q_{i}|$.   
  \item The function $f_s \colon \bb{R}_{>0}^k \to \bb{R}$ extends to
    a continuous function $\ov{f}_s \colon \bb{R}^k_{\ge 0} \to
    \bb{R}$.
  \item \label{item:3} In the case that $\mu=\nu$, the function $f_s$ is convex as a
    function on $\bb{R}_{>0}^k$ and the function
    $\ov{f}_s$ is convex as a function on $\bb{R}^k_{\ge 0}$. 
  \end{enumerate}
\end{theorem}
We make a few remarks about Theorem \ref{singbiext}. First of all, by \cite[Theorem 81]{brospearl}, if $U$ is small enough,
    admissible sections $s$ as in Theorem \ref{singbiext} exist.
    
    Next, by \cite[Corollary 177]{brospearl}, if $U$ is small enough the set of admissible biextension variations on $U \cap X$ is a non-empty torsor over the group of meromorphic functions with
    poles  only on $D$. Hence the admissibility of $s$ and condition
    \eqref{eq:12} imply the admissibility of $s'$.
    
    Clearly, the function $f_s$ from Theorem \ref{singbiext} coincides with the $f_s$ from Pearlstein's asymptotic estimate (\ref{asymp_pearl}). However, we do not assume \cite[Theorem~5.19]{pearldiff} in our proof, hence our arguments give an independent proof of (\ref{asymp_pearl}) for the case of polarized, torsion-free variations of type $(-1,0), (0,-1)$.
    
    If the family $J = J(\bf H)$ of jacobians is algebraic, that is, $J$ is an
    abelian scheme over $X$, then any two algebraic sections $\mu$
    and $\nu$ of $J$ over $X$ are admissible, and for such $\mu, \nu$ the Lear extension
    of $\ca{P}_{\nu,\mu}$ over $\Xbar$ is an algebraic $\qq$-line bundle.  

Let the rank of $\hh$ be $2g$. Our proof of Theorem \ref{singbiext} in section \ref{sec:proof-main-results} will show that the function $f_s$ in the theorem has the shape 
\begin{equation} \label{shape_f_s}
{f}_s(x_1, \dots, x_k) = (\sum_{i=1}^k x_i A_i c_i )^t (\sum_{i=1}^k x_i A_i )^{-1} (\sum_{i=1}^k x_i A_i c_i ),
\end{equation}
where the $A_i$ $(i = 1, \dots, k)$ are positive semidefinite $g
\times g$ matrices such that $\sum_{i=1}^k A_{i}$ is positive definite, the
$c_i$ are in $\bb Q^g$, and are determined by 
the monodromy of $\mu$ and $\nu$  about the branches of the divisor $D$. Thus the singularity of $-\log\aabs{s}$ has the shape 
\begin{equation*}
\begin{split}
f_s(-\log &\abs{q_1}, \dots, -\log \abs{q_k}) \\
= &\left(\sum_{i=1}^k -\log\abs{q_i} A_i c_i \right)^t \left(\sum_{i=1}^k
  -\log\abs{q_i} A_i \right)^{-1} \left(\sum_{i=1}^k -\log\abs{q_i} A_i c_i
\right).  
\end{split}
\end{equation*}
Finally, Example \ref{exm:1} below will show that, in general, the locus of
indeterminacy $D^{\mathrm{sing}}$ of $b$ can not be reduced to a
smaller set.   

We next turn to the issue of local integrability, in dimension one. R. Hain has made the following conjecture 
(see \cite[Conjecture 6.4]{hain_normal}). Assume we work with an
arbitrary polarized variation of Hodge structures $(\hh,\lambda)$ of
weight $-1$, whose underlying local system of abelian groups is torsion-free, and let $\ca{P}$
be its  Poincar\'e bundle. Let $\nu$ be an admissible normal function of the family of intermediate jacobians $J(\hh)$ over $X$.
 \begin{conjecture}[Hain] \label{con:2}  Write $\hat{\ca{P}} =
   (\mathrm{id},\lambda)^*\ca{P}$ and let $\omega=c_1(\hat{\ca{P}})$ be
   the first Chern form of the pullback of the Poincar\'e bundle with its
   canonical metric. Assume that
   $X$ is a curve. Let $L=\ca{P}_\nu=\nu^* \hat{\ca{P}}$ with induced
   metric $\aabs{-}$ and let $N \in \zz_{>0}$ be such that $L^{\otimes
     N}$ extends as a continuous metrized line bundle over $\ov
   X$. Let $c_1\left(\Lear{L^{\otimes N}, \aabs{-}}_{\ov X}\right)$ be the
   first Chern class of the extended line bundle $\Lear{L^{\otimes N},
     \aabs{-}}_{\ov X}$. Then the $2$-form $\nu^*\omega$ is
   integrable on $\ov X$, and the 
   equality \[ \int_X \nu^*\omega = \frac{1}{N} \int_{\ov X} c_1
     \left(\Lear{L^{\otimes N}, \aabs{-}}_{\ov X} \right) \]
   holds. 
\end{conjecture}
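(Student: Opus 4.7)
The strategy is to combine the one-dimensional specialization of Theorem \ref{singbiext} with the local integrability and vanishing-residue statements announced at the end of Section \ref{sec:families_curves}, and then apply Stokes' theorem and the Poincar\'e--Lelong formula. We work throughout in the abelian setting of the paper, so that Theorem \ref{singbiext} is available.

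Since $X$ is a curve, $D \subset \ov X$ is a finite set $\{x_1,\ldots,x_m\}$. At each $x_i$, pick a local coordinate $q_i$ and a local generating section $s_i$ of $L = \ca{P}_\nu$. The $k=n=1$ case of Theorem \ref{singbiext} provides a decomposition
\[
-\log\aabs{s_i} = b_0^{(i)}(q_i) - c_i\log|q_i|,
\]
with $b_0^{(i)}$ bounded and continuous near $x_i$ and $c_i \in \qq$. Since $L^{\otimes N}$ extends as a continuously metrized line bundle $\ll := \Lear{L^{\otimes N}, \aabs{-}}_{\ov X}$, one has $Nc_i \in \zz$, and near $x_i$ the section $s_i^N q_i^{Nc_i}$ is a continuously metrized local frame of $\ll$ whose log-potential equals $Nb_0^{(i)}(q_i)$.

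Choose a meromorphic section $t$ of $\ll$ with divisor $\divisor(t) = \sum_j n_j P_j$ disjoint from $D$, so that $\int_{\ov X} c_1(\ll, \aabs{-}) = \deg \ll = \sum_j n_j$. For $\epsilon > 0$ small, let $z_j$ be a local coordinate at $P_j$ and set $\Omega_\epsilon = \ov X \setminus \bigcup_j\{|z_j|<\epsilon\} \setminus \bigcup_i\{|q_i|<\epsilon\}$. On $\Omega_\epsilon$ the function $-\log\aabs{t}$ is smooth, so Stokes' theorem gives
\[
\int_{\Omega_\epsilon} c_1(L^{\otimes N}, \aabs{-}) = \int_{\partial\Omega_\epsilon} d^c(-\log\aabs{t}).
\]
By Poincar\'e--Lelong the small circles about the $P_j$ contribute $\sum_j n_j$ in the limit $\epsilon \to 0$. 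Near each $x_i$, one has $-\log\aabs{t} = Nb_0^{(i)}(q_i) + (\text{smooth})$, so the contribution from the circle $\{|q_i|=\epsilon\}$ reduces to $N\int_{\{|q_i|=\epsilon\}} d^c b_0^{(i)}$, which tends to $0$ by the zero-residue statement for $\partial b_0^{(i)}$. Simultaneously, local integrability of $\partial\bar\partial b_0^{(i)}$ lets us pass to the limit on the left: $\int_{\Omega_\epsilon} c_1(L^{\otimes N}, \aabs{-}) \to N\int_X \nu^*\omega$. Combining the two limits yields
\[
N\int_X \nu^*\omega = \sum_j n_j = \int_{\ov X} c_1(\ll, \aabs{-}),
\]
and dividing by $N$ gives the conjectured equality.

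The main obstacle is establishing the two analytic inputs just invoked: local integrability of $\partial\bar\partial b_0^{(i)}$ on a neighborhood of $x_i$, and vanishing of the residue of $\partial b_0^{(i)}$ at $x_i$. Neither is a formal consequence of continuity of $b_0^{(i)}$; both reflect the finer asymptotic structure of the biextension metric at the degenerating boundary and must be derived from the explicit form of the potential in terms of the period data of the variation. Once these inputs are in place, the rest is a routine application of Stokes and Poincar\'e--Lelong, as outlined above.
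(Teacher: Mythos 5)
Your proposal is correct and follows the paper's own route: the two analytic inputs you flag as the main obstacle (local integrability of $\partial\bar\partial b_0^{(i)}$ and vanishing residue of $\partial b_0^{(i)}$ at $x_i$) are exactly the content of Theorem \ref{theorem:local_integrability_over_curves}, which the paper proves from the derivative estimates $\partial\varphi_0/\partial x_1 = O(x_1^{-2})$ and $\partial^2\varphi_0/\partial x_1^2=O(x_1^{-3})$ of Theorem \ref{thm:main_technical}, and the paper then deduces Conjecture \ref{con:2} exactly as you do, by globalizing with Stokes' theorem. One small sign correction: with the convention $-\log\aabs{s_i}=b_0^{(i)}-c_i\log|q_i|$, the continuously metrized local frame of the Lear extension near $x_i$ is $s_i^N q_i^{-Nc_i}$, not $s_i^N q_i^{Nc_i}$; with this fix the log-potential of the frame is indeed $Nb_0^{(i)}$ and the rest of your Poincar\'e--Lelong/annular Stokes argument goes through unchanged.
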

Note that $\nu^*\omega = c_1(\ca{P}_\nu)$, and that the integral on
the right hand side equals $\frac{1}{N} \deg_{\ov X} \Lear{L^{\otimes
    N}, \aabs{-}}_{\ov X}$. We prove the following result, which
implies Hain's conjecture in the case of a variation of
torsion-free polarized 
Hodge structure of type $(-1,0), (0,-1)$. 
\begin{theorem} \label{localint} \label{theorem:local_integrability_over_curves}
  Assume that the polarized variation $\hh$ over $X$ is
  torsion-free and pure of type $(-1,0),
  (0,-1)$, and that the monodromy operators on the fibers of $\hh$
  about all irreducible components of $D$ are unipotent. Let $s$ be a
  section of $\ca{P}_{\nu,\mu}$ corresponding to an admissible
  biextension variation of mixed Hodge structures over $U \cap X$ and
  assume that $\dim X = 1$. Write
\[ -\log\|s\| = b(z) - r \log|t| \]
on $U \cap X$ with $r \in \bb{Q}$ and with $b$ bounded continuous on
$U$, as can be done by the existence of the Lear extension of
$\ca{P}_{\nu,\mu}$ over $\ov X$. Then the 1-form $\partial b$ is locally integrable
on $U$ with zero residue. Moreover the 2-form $\partial\bar \partial
b$ is locally integrable on $U$.
\end{theorem}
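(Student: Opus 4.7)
The plan is to exploit the explicit local form of the biextension metric coming from the assumption that $\hh$ is pure of type $(-1,0),(0,-1)$, and to deduce the integrability claims from elementary polar-coordinate estimates applied to an asymptotic expansion in powers of $1/\log|t|$. Shrink $U$ so that $(U,t)\cong(\Delta,t)$ and $D\cap U=\{t=0\}$, and choose a symplectic frame of $\hh_{\bb{Z}}$ on the universal cover of $U\cap X$. Since $\hh$ is of type $(-1,0),(0,-1)$ with unipotent monodromy, the period map of $J(\hh)$ takes the form
\[
\tau(t)=\frac{N}{2\pi i}\log t+\tau_0(t),
\]
with $N$ a symmetric positive semi-definite real matrix (by the polarization and admissibility) and $\tau_0\colon\Delta\to M_g(\cc)$ holomorphic. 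The algebraic sections $\nu,\mu$ lift multivaluedly as
\[
\wt\nu(t)=\alpha\,\frac{\log t}{2\pi i}+\wt\nu_0(t),\qquad \wt\mu(t)=\beta\,\frac{\log t}{2\pi i}+\wt\mu_0(t),
\]
with $\alpha,\beta\in\rr^g$ and $\wt\nu_0,\wt\mu_0$ holomorphic on $\Delta$. For an appropriate local generating section $s$, the standard formula for the Poincar\'e biextension metric then reads
\[
-\log\|s(t)\|^2=2\pi\,\Im(\wt\nu(t))^{T}\bigl(\Im\tau(t)\bigr)^{-1}\Im(\lambda\wt\mu(t))+h(t),
\]
with $h$ real-analytic on $\Delta$.

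Set $S(t):=\Im\tau(t)=-\frac{N}{2\pi}\log|t|+\Im\tau_0(t)$. By decomposing $\hh_\rr$ as the polarization-orthogonal sum $\ker N\oplus(\ker N)^{\perp}$ and inverting $S(t)$ blockwise, one checks that $S(t)^{-1}$ admits an asymptotic expansion
\[
S(t)^{-1}=M_0(t)+\frac{M_1(t)}{\log|t|}+\frac{M_2(t)}{(\log|t|)^2}+\cdots
\]
with each $M_i$ real-analytic on $\Delta$ (on $\ker N$ the $\log|t|$-part of $S$ vanishes and $\Im\tau_0|_{\ker N}$ is positive definite by admissibility, giving $M_0$; on $(\ker N)^{\perp}$ the matrix $N$ is invertible and the expansion starts at $M_1/\log|t|$). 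Combining this with the analogous expansions of $\Im(\wt\nu)$ and $\Im(\wt\mu)$ and substituting into the formula above yields
\[
-\log\|s(t)\|^2=-2r\log|t|+g_0(t)+\sum_{i\ge 1}\frac{g_i(t)}{(\log|t|)^i}
\]
with each $g_i$ real-analytic on $\Delta$ and $r\in\qq$ the slope of the Lear extension. Hence $b=-\log\|s\|+r\log|t|$ equals $\tfrac12 g_0+\sum_{i\ge 1}\tfrac12 g_i/(\log|t|)^i$, which is manifestly bounded continuous on $U$.

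Term-by-term differentiation, using $\partial_t(\log|t|)^{-k}=-k/(2t(\log|t|)^{k+1})$ and the analogous formula for $\partial_{\bar t}$, gives for $t$ close to $0$ the pointwise bounds
\[
|\partial_t b(t)|\le\frac{C}{|t|(\log|t|)^2},\qquad |\partial_t\partial_{\bar t}b(t)|\le\frac{C}{|t|^2(\log|t|)^2},
\]
the real-analytic factors $g_i$ contributing only bounded multiplicative constants. A polar-coordinate computation gives
\[
\int_{|t|<1/2}\frac{dA(t)}{|t|^2(\log|t|)^2}=2\pi\int_0^{1/2}\frac{dr}{r(\log r)^2}=\frac{2\pi}{\log 2}<\infty,
\]
and the corresponding integral for $\partial b$ is even easier; hence $\partial b,\partial\bar\partial b\in L^1_{\mathrm{loc}}(U)$. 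The zero-residue property of $\partial b$ is then automatic from the boundedness of $b$: for any $\varphi\in C^\infty_c(U)$, integrating by parts on $U\setminus\{|t|<\epsilon\}$ and letting $\epsilon\to 0$ leaves a boundary contribution of order $\|b\|_\infty\,\|\varphi\|_{C^1}\cdot 2\pi\epsilon\to 0$, so $\partial b$ as a distribution coincides with the above $L^1_{\mathrm{loc}}$ form, without any $\delta_0$-contribution.

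The main technical obstacle lies in the first step: the rigorous derivation of the explicit local formula for the biextension metric, and the control of $S(t)^{-1}$ when $\ker N$ is non-trivial. Both require carefully matching the Poincar\'e biextension metric to the block decomposition of $\hh_\rr$ induced by $N$, and exploit that the $(-1,0),(0,-1)$ hypothesis makes the pullback metric purely quadratic in $\Im(\wt\nu)$ and $\Im(\wt\mu)$, with no higher-weight theta-type corrections. This last point guarantees that the expansion of $-\log\|s\|^2$ only contains a linear-in-$\log|t|$ term together with $(\log|t|)^{-k}$ remainders, and no $(\log|t|)^2$ terms, which is precisely what makes the derivative bounds above close.
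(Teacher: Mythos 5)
Your approach is essentially the same as the paper's: express $-\log\|s\|$ via the period map as a quadratic form $\Im\delta^t(\Im\Omega)^{-1}\Im\delta$ with $\Im\Omega$ growing linearly in $-\log|t|$, decompose into blocks adapted to $\ker N$ (Schur complement), and deduce integrability from pointwise bounds in powers of $1/\log|t|$. The paper packages exactly this into its ``normlike function'' machinery: its Theorem \ref{thm:main_technical}.\ref{main_technical_k=1} gives $\partial_{x_1}\varphi_0 = O(x_1^{-2})$ and $\partial_{x_1}^2\varphi_0 = O(x_1^{-3})$, which after substituting $x_1 = -\log|t|$ yield $|\partial b| \lesssim 1/(|t|(\log|t|)^2)$ and $|\partial\bar\partial b| \lesssim 1/(|t|^2(\log|t|)^3)$; Lemma \ref{expandoominverse} is precisely the first two terms of your proposed expansion of $S(t)^{-1}$. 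You propose a \emph{full} asymptotic series in $1/\log|t|$, which is stronger than needed and would require additional justification (uniform convergence in the $t$-dependent coefficients, term-by-term differentiability); the paper only needs, and only proves, the first-order behaviour plus the bounds on the first two derivatives, which is more economical.

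Two small points to sharpen. First, your ``zero residue'' argument conflates two statements: that the distributional $\partial b$ has no $\delta_0$-contribution, and that $\lim_{\epsilon\to 0}\int_{|t|=\epsilon}\partial b = 0$. What the paper actually uses (so that $d[\bar\partial b]=[\partial\bar\partial b]$ via Stokes) is the latter, and it does not follow from boundedness of $b$ alone — it follows from the pointwise bound you already have, via $\bigl|\int_{|t|=\epsilon}\partial b\bigr| \leq 2\pi\epsilon \cdot C/(\epsilon(\log\epsilon)^2) = 2\pi C/(\log\epsilon)^2 \to 0$. This is what the paper writes down. Second, your bound $|\partial_t\partial_{\bar t}b| \leq C/(|t|^2(\log|t|)^2)$ is weaker than the true $O(1/(|t|^2(\log|t|)^3))$ — the worst term $g_1 \cdot \partial_t\partial_{\bar t}(1/\log|t|) = g_1/(2|t|^2(\log|t|)^3)$ actually decays by one more power of $\log|t|$ than you claim — but since $1/(r(\log r)^2)$ is already integrable this discrepancy is harmless. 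The main thing missing, as you note yourself, is the careful derivation of the metric formula and the control of the block inverse; the paper does this in Sections 2.2–2.4 and 3, and you should cite or reprove those inputs rather than asserting them.
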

As also $\partial \bar{\partial} \log|t|$ is locally integrable, we find that $\partial \bar{\partial} \log \aabs{s}$ is locally integrable. 
Since moreover the $1$-form $\partial b$ has no residue on $U$, so
that $d [\bar{\partial} b]=[\partial \bar{\partial} b]$, upon
globalizing using bump functions and applying Stokes' theorem we find 
\[ \int_X c_1(\ca{P}_{\nu,\mu}) = \frac{1}{N} \int_{\ov X} c_1 \left(\Lear{\ca{P}_{\nu,\mu}^{\otimes N}, \aabs{-}}_{\ov X} \right) 
= \deg \Lear{\ca{P}_{\nu,\mu}, \aabs{-}}_{\ov X}\, . \]
In the diagonal case, we mention that by
\cite[Theorem~13.1]{hain_normal} or \cite[Theorem~8.2]{pearlpeters}
the metric on $\ca{P}_\nu$ is non-negative. Thus Theorem \ref{localint} implies that actually the inequality 
\begin{equation} \label{positivedegree}
\on{deg} \Lear{\ca{P}_\nu, \aabs{-}}_{\ov{X}} \geq 0  
\end{equation}
holds. We mention that in a letter to P. Griffiths, G. Pearlstein sketches a proof of Conjecture \ref{con:2}, and hence of the inequality (\ref{positivedegree}), without the assumption that the type be $(-1,0), (0,-1)$.

We return again to the setting where the parameter space $X$ is of any dimension. However, we specialize to the ``diagonal'' case where $\mu=\nu$. Consider, as before a test curve $\ov \phi \colon \ov C\to \ov X$ that has image not contained in $D$, and a point $0\in \ov C$ such that $\ov \phi(0)=x_0$. Let $\phi$ denote the restriction of $\ov \phi$ to $\ov C \setminus \ov \phi^{-1}D$. The $\qq$-line bundle
\begin{equation*}
\Lear{\phi^*(\ca{P}_\nu, \aabs{-})}_{\ov{C}}^{\otimes -1} \otimes \ov{\phi}^*\Lear{\ca{P}_\nu, \aabs{-}}_{\ov{X}}
\end{equation*}
has a canonical non-zero rational section, as it is canonically
trivial over $\ov C \setminus \ov \phi^{-1}D$. We call its divisor the
height jump divisor $J=J_{\phi,\nu}$ on $\ov C$. R. Hain has made the following conjecture (see \cite[end of \S 14]{hain_normal}).
\begin{conjecture} \label{con:1} For all holomorphic test curves 
$\ov \phi \colon \ov C\to \ov X$ with image not contained in $D$, the height jump divisor $J=J_{\phi,\nu}$ on $\ov C$ is effective.
\end{conjecture}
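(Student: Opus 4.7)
The plan is to deduce Conjecture~\ref{con:1} directly from Theorem~\ref{singbiext} by computing the multiplicity of $J_{\phi,\nu}$ at each point of $\ov C$ explicitly in terms of the function $\ov f_s$, then invoking the positive homogeneity together with the convexity provided by part~(4).

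Fix a point $0\in\ov C$ with $x_0=\ov\phi(0)\in D$, choose a coordinate chart $(q_1,\ldots,q_n)\colon U\isom\Delta^n$ around $x_0$ as in Theorem~\ref{singbiext}, and let $s$ be a local generating section of $\ca{P}_\nu$ on $U\cap X$. Write $\ov\phi$ near $0$ as
\begin{displaymath}
t\mapsto \bigl(t^{m_1}u_1(t),\ldots,t^{m_k}u_k(t),q_{k+1}(t),\ldots,q_n(t)\bigr)
\end{displaymath}
with $u_1,\ldots,u_k$ invertible. I would compute the order at $0$ of $\phi^*s$ as a rational section of each of $L_1=\Lear{\phi^*(\ca{P}_\nu,\aabs{-})}_{\ov C}$ and $L_2=\ov\phi^*\Lear{\ca{P}_\nu,\aabs{-}}_{\ov X}$. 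For $L_2$: specializing Theorem~\ref{singbiext}(1) to a smooth point of the local branch $D_i$ (where only $q_i\to 0$) and using positive homogeneity and the continuous extension $\ov f_s$ from part~(3) yields $-\log\aabs{s}=-\ov f_s(\bd{e}_i)\log|q_i|+O(1)$, so $s$ has divisor $\sum_i \ov f_s(\bd{e}_i)\,D_i$ in the Lear extension over $\ov X\setminus D^{\mathrm{sing}}$; since $\codim D^{\mathrm{sing}}\geq 2$ this equality of $\qq$-divisors extends to $\ov X$, and pulling back gives $\ord_0(\phi^*s\text{ in }L_2)=\sum_i m_i\,\ov f_s(\bd{e}_i)$. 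For $L_1$: the pullback $\phi^*\hh$ is admissible of the correct Hodge type with unipotent local monodromy $\prod_i T_i^{m_i}$ (products of commuting unipotents), and Theorem~\ref{singbiext} applied to it on $\ov C$ gives a rational $r_1$ with $-\log\aabs{\phi^*s(t)}=\tilde b(t)+r_1(-\log|t|)$ and $\tilde b$ bounded continuous near $0$. Substituting $\ov\phi$ into the expansion of $-\log\aabs{s}$ on $\ov X$, writing $\lambda=-\log|t|$ and $v_i=-\log|u_i(t)|$, and using positive homogeneity, one gets $-\log\aabs{\phi^*s}/\lambda=b(\ov\phi(t))/\lambda+f_s(m+v/\lambda)$; for $\ov\phi(t)\in U_\epsilon$ the arguments $m+v/\lambda$ remain in $\rr^k_{>0}$ and converge to $m$, so by continuity of $\ov f_s$ the left side tends to $\ov f_s(m_1,\ldots,m_k)$, forcing $r_1=\ov f_s(m_1,\ldots,m_k)$.

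A short computation with local generating sections identifies the canonical rational section $\sigma$ of $L_1^{-1}\otimes L_2$ — tautologically the identity on $\ov C\setminus\ov\phi^{-1}D$ — and shows that $\ord_0(\sigma)=\ord_0(\phi^*s\text{ in }L_2)-\ord_0(\phi^*s\text{ in }L_1)$, hence
\begin{displaymath}
\ord_0(J_{\phi,\nu})=\sum_{i=1}^k m_i\,\ov f_s(\bd{e}_i)-\ov f_s(m_1,\ldots,m_k).
\end{displaymath}
Since $\ov f_s$ is convex on $\rr^k_{\geq 0}$ by Theorem~\ref{singbiext}(4) and homogeneous of degree one, it is subadditive: convexity gives $\ov f_s((x+y)/2)\leq(\ov f_s(x)+\ov f_s(y))/2$ and positive homogeneity gives $\ov f_s((x+y)/2)=\ov f_s(x+y)/2$, so $\ov f_s(x+y)\leq\ov f_s(x)+\ov f_s(y)$; iterating with $x_i=m_i\bd{e}_i$ and using $\ov f_s(m_i\bd{e}_i)=m_i\ov f_s(\bd{e}_i)$ yields $\ov f_s(m)\leq\sum_i m_i\,\ov f_s(\bd{e}_i)$. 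Thus $\ord_0(J_{\phi,\nu})\geq 0$, and since $0\in\ov C$ was arbitrary, $J_{\phi,\nu}$ is effective.

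The main obstacle I anticipate lies in the limiting argument used to identify $\ord_0(\phi^*s\text{ in }L_1)$ with $\ov f_s(m_1,\ldots,m_k)$ when some $m_i$ vanish: then $m$ lies on the boundary of $\rr^k_{\geq 0}$, where the rational function $f_s=P_s/Q$ need not be defined, since $Q$ is only guaranteed non-vanishing on the open positive orthant. One must use the continuity of the extension $\ov f_s$ from part~(3) together with the fact that, for $\ov\phi(t)\in U_\epsilon$, the arguments $m+v(t)/\lambda(t)$ stay in the open positive orthant and converge to $m$, so that passage to the limit through values of $f_s$ is legitimate. The remaining verifications (admissibility and unipotency of $\phi^*\hh$, and Hartogs extension of the Lear $\qq$-divisor across $D^{\mathrm{sing}}$) are standard.
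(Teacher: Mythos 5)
Your argument is correct and follows essentially the same route as the paper's proof of Theorem \ref{theorem:effectivity}: you compute $\ord_0$ of $\phi^*s$ in the two Lear extensions as $\ov f_s(m_1,\ldots,m_k)$ and $\sum_i m_i\,\ov f_{s,i}$ respectively (matching the paper's Propositions \ref{learpullback} and \ref{pullbacklear}, the latter going through the explicit description of the Lear extension in Corollary \ref{learext_explicit}), and then conclude effectivity from the subadditivity of $\ov f_s$ derived from convexity plus weight-one homogeneity, exactly as in the paper. The concern you raise about boundary $m_i=0$ is actually moot here since the chart is centred at $\ov\phi(0)$ so that necessarily $m_i\geq 1$ for $i=1,\dots,k$, but your fallback via continuity of $\ov f_s$ is sound and mirrors the paper's own use of $\ov f_s$ rather than $f_s$ in those propositions.
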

 Choose coordinates in a neighbourhood $U$ of $x_{0}$ as before so that $x_{0}$ has coordinates $(0,\dots,0)$
and let $f_s \in \bb{Q}(x_1,\ldots,x_k)$ be as in Pearlstein's asymptotic estimate (\ref{asymp_pearl}), based on the choice of some admissible section $s$ of $\ca{P}_\nu$ on $U \cap X$. It can be shown that the function $f _s\colon \bb{R}_{>0}^k \to \bb{R}$ extends to a continuous function $\ov{f}_s \colon \bb{R}^k_{\ge 0} \to \bb{R}$. Locally around $0$ the map $\ov
\phi $ can be written as
\begin{displaymath}
  \ov \phi (t)=(t^{m_{1}}u_{1}(t),\dots,
  t^{m_{k}}u_{k}(t),q_{k+1}(t),\dots, q_{n}(t)), 
\end{displaymath}
where, for $i\in [1,k]$, $m_{i}> 0$ and $u_{i}(0)\not = 0$.
 Write
$\ov{f}_{s,i}\defeq\ov{f}_s(0,\ldots,0,1,0,\ldots,0)$ (the $1$ placed
in the $i$-th spot),  then
\begin{equation} \label{expression_J}
  \ord_0 J =-\ov{f}_s(m_1,\ldots,m_k)+\sum_{i=1}^{k}m_{i}\ov{f}_{s,i} \, . 
\end{equation}
Note that indeed $\ord_0 J$ is independent of the choice of $s$. The rational number $\ord_0 J$ is called the ``height jump''
associated to the test curve $\ov \phi$, the admissible normal function $\nu$ and the point $0\in \ov C$. 

The terminology is due to R.\ Hain \cite{hain_normal}, who also observed a first instance where the height jump is non-zero. We refer the reader to the monograph \cite{brospearl} by P. Brosnan and
G. Pearlstein, where an extensive study of
the height jump in complete generality is given. Note that
the height jumps precisely when $f_s$ is not linear.
We mention that Conjecture \ref{con:1} about the height jump was stated in
\cite{hain_normal} only for the normal function on $\mm_g$ associated to the Ceresa
cycle, but it seems reasonable to make this broader conjecture.  

In this paper we prove Conjecture \ref{con:1} in the case of admissible normal functions of families of polarized abelian varieties. 
\begin{theorem} \label{theorem:effectivity} Assume that the polarized
  variation $\hh$ over the smooth complex variety $X$ is torsion-free and pure of type
  $(-1,0), (0,-1)$, and that the monodromy operators on the fibers of
  $\hh$ about all irreducible components of $D$ are unipotent. Let $\nu$ be an admissible normal function of the family of intermediate jacobians $J(\hh)$ over $X$. Then
  for all holomorphic test curves $\ov \phi \colon \ov C\to \ov X$
  with image not contained in $D$, the associated height jump divisor
  $J=J_{\phi,\nu}$ on $\ov C$ is effective.
\end{theorem}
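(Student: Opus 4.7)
The plan is a short deduction from Theorem~\ref{singbiext}(4). The paper has already derived the explicit local formula for the height jump at a boundary point $0 \in \ov C$ mapping to $x_0 \in D$, namely
\[
\ord_0 J = -\ov f_s(m_1,\ldots,m_k) + \sum_{i=1}^k m_i \ov f_{s,i},
\qquad \ov f_{s,i} = \ov f_s(e_i),
\]
with $e_i$ the $i$-th standard basis vector of $\bb{R}^k$. Since the support of the height jump divisor is contained in the finite set $\ov\phi^{-1}D$, establishing effectivity of $J$ reduces to verifying that $\ord_0 J \ge 0$ at each such boundary point, i.e., that
\[
\ov f_s(m_1,\ldots,m_k) \le \sum_{i=1}^k m_i\, \ov f_s(e_i)
\]
holds for every $(m_1,\ldots,m_k) \in \bb{Z}_{\ge 0}^k$. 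Thus the problem collapses to a combinatorial inequality for the boundary function $\ov f_s$.

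The crucial input is the convexity assertion of Theorem~\ref{singbiext}(4): in the diagonal case $\mu = \nu$, the extension $\ov f_s$ is convex on the nonnegative orthant $\bb{R}^k_{\ge 0}$. Combined with positive homogeneity of degree one (inherited from $f_s$, and preserved in the continuous extension), convexity immediately forces subadditivity on $\bb{R}^k_{\ge 0}$: for any $x, y \in \bb{R}^k_{\ge 0}$,
\[
\ov f_s(x+y) \;=\; 2\,\ov f_s\!\left(\tfrac{x+y}{2}\right) \;\le\; \ov f_s(x) + \ov f_s(y).
\]
Iterating this along the decomposition $(m_1,\ldots,m_k) = \sum_{i=1}^k m_i e_i$, and then using homogeneity $\ov f_s(m_i e_i) = m_i \ov f_s(e_i)$ (valid because each $m_i \ge 0$), gives exactly the required bound, and hence $\ord_0 J \ge 0$.

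In this sense all the substantive work lies upstream in Theorem~\ref{singbiext}(4); once the convexity of $\ov f_s$ in the diagonal case is established, the effectivity of the height jump divisor is a purely formal consequence via the homogeneous-convex-implies-subadditive principle. I therefore do not anticipate any genuine obstacle in this final step: the only point requiring a modicum of care is to check that the cone $\bb{R}^k_{\ge 0}$ on which $\ov f_s$ is defined really is closed under the additions used in the induction (which it is), so that every intermediate application of convexity takes place within the domain on which Theorem~\ref{singbiext}(4) applies.
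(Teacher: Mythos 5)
Your proposal is correct and follows essentially the same route as the paper: deduce subadditivity of $\ov f_s$ from its convexity (Theorem~\ref{singbiext}(4)) and weight-one homogeneity, then apply it to the formula $\ord_0 J = -\ov f_s(m_1,\ldots,m_k) + \sum_i m_i\,\ov f_{s,i}$. The one point you pass over lightly is the justification of that formula itself: it is only asserted in the introduction, and its actual proof forms part of the paper's Section~4 argument (Propositions~\ref{learpullback} and~\ref{pullbacklear}, which compute the order at $0$ of a rational section of $\Lear{\phi^*(\ca{P}_\nu,\aabs{-})}_{\ov C}$ and the pullback of $\on{div}_{\ov X}(s)$ respectively, using the boundedness statement of Theorem~\ref{singbiext}(1) and Corollary~\ref{learext_explicit}). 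Citing that formula as ``already derived'' is therefore mildly circular; a complete proof should establish it, but once it is in hand your convexity-implies-subadditivity step is identical to the paper's.
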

Combining with inequality (\ref{positivedegree}) we obtain
\begin{corollary} Assume that $\ov C$ is smooth and projective. Then under the assumptions of Theorem \ref{theorem:effectivity}, the $\bb{Q}$-line bundle $\ov{\phi}^*\Lear{\ca{P}_\nu, \aabs{-}}_{\ov{X}}$ has non-negative degree on $\ov{C}$.
\end{corollary}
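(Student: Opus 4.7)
The strategy is to decompose the line bundle $\ov\phi^*\Lear{\ca P_\nu, \aabs{-}}_{\ov X}$ using the very definition of the height jump divisor, and then show that both pieces have non-negative degree. By construction, the height jump divisor $J=J_{\phi,\nu}$ on $\ov C$ is defined as the divisor of the canonical rational section of
\[
\Lear{\phi^*(\ca P_\nu, \aabs{-})}_{\ov C}^{\otimes -1} \otimes \ov\phi^*\Lear{\ca P_\nu,\aabs{-}}_{\ov X},
\]
a section that is regular and nowhere vanishing on $\ov C\setminus \ov\phi^{-1}D$. Hence one has a canonical isomorphism
\[
\ov\phi^*\Lear{\ca P_\nu,\aabs{-}}_{\ov X}\;\cong\;\Lear{\phi^*(\ca P_\nu, \aabs{-})}_{\ov C}\otimes \ca O_{\ov C}(J),
\]
and taking degrees on the smooth projective curve $\ov C$ yields
\[
\deg \ov\phi^*\Lear{\ca P_\nu,\aabs{-}}_{\ov X}\;=\;\deg \Lear{\phi^*(\ca P_\nu, \aabs{-})}_{\ov C}\;+\;\deg J.
\]

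The plan is now to control the two summands independently. The term $\deg J$ is non-negative by Theorem~\ref{theorem:effectivity}, which asserts the effectivity of $J$ under exactly the hypotheses made on $\hh$ here. For the remaining term, I would apply the degree inequality \eqref{positivedegree} directly on $\ov C$ to the pulled-back data: the pullback $\phi^*\hh$ is an admissible variation of polarized Hodge structures of type $(-1,0),(0,-1)$ on $\ov C\setminus \ov\phi^{-1}D$, and $\phi^*\nu$ is a section of its associated family of abelian varieties whose biextension line bundle with its canonical metric is $\phi^*(\ca P_\nu,\aabs{-})$. Since \eqref{positivedegree} (which follows from Theorem~\ref{theorem:local_integrability_over_curves} together with the non-negativity of the diagonal biextension curvature cited from Hain and Pearlstein--Peters) holds in the curve case whenever the hypotheses of that theorem are met, one concludes $\deg \Lear{\phi^*(\ca P_\nu,\aabs{-})}_{\ov C}\ge 0$. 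Summing the two non-negative contributions gives the desired inequality.

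The only point that requires a brief justification is that the hypotheses of Theorem~\ref{theorem:local_integrability_over_curves} do indeed transfer along the pullback by $\ov\phi$. Admissibility of a polarized variation of Hodge structures is preserved by pullback along a holomorphic map of complex algebraic varieties, and the purity of type $(-1,0),(0,-1)$ is a fibrewise condition, so it is preserved. For the unipotence of local monodromy, near a point $p\in \ov\phi^{-1}D$ the local monodromy operator of $\phi^*\hh$ is a product of integer powers of the local monodromy operators of $\hh$ around the components of $D$ passing through $\ov\phi(p)$; since each of these is unipotent and unipotence is preserved under products of commuting (and more generally, under integer powers of) unipotent operators, the pullback variation also has unipotent local monodromy around every component of $\ov\phi^{-1}D$. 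With these standard verifications in hand, the proof is essentially immediate: the main substantive content is already packaged into Theorem~\ref{theorem:effectivity} and inequality \eqref{positivedegree}, and the corollary is the arithmetic combination of the two.
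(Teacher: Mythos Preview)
Your proof is correct and follows exactly the approach the paper intends: the corollary is derived by combining Theorem~\ref{theorem:effectivity} (giving $\deg J\ge 0$) with inequality~\eqref{positivedegree} applied to the pulled-back variation on $\ov C$ (giving $\deg \Lear{\phi^*(\ca P_\nu,\aabs{-})}_{\ov C}\ge 0$). The paper states this in a single sentence, and you have simply spelled out the decomposition and the routine verification that the pullback along $\ov\phi$ inherits the required hypotheses.
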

The key to our proof of Theorem \ref{theorem:effectivity} is the convexity of  the homogeneous function $\ov f_s$, as asserted in Theorem \ref{singbiext}(\ref{item:3}). We have the following explicit expression for $\ord_0 J$. In equation (\ref{shape_f_s}) we already gave an expression for $f_s$ and hence $\ov f_s$ in terms of matrices $A_i$ and vectors $c_i$ for $i=1,\ldots,k$. We will see in subsection \ref{propertiesnormlike} that $\ov{f}_{s,i}=c_i^t A_i c_i $ for $i=1,\ldots,k$. Following the general expression (\ref{expression_J})  this gives
\[ \ord_0 J = -(\sum_{i=1}^k m_i A_i c_i )^t (\sum_{i=1}^k m_i A_i )^{-1} (\sum_{i=1}^k m_i A_i c_i ) + \sum_{i=1}^k m_i c_i^t A_i c_i   \]
for the height jump in our setting. 

Turning again to the case of the Ceresa cycle, note that since the
intermediate Jacobian of the primitive part of $H_{3}(J(Y_{x}))$ is a
compact complex torus but not an abelian variety, we can not apply
directly our
results for families of abelian varieties to this case. 

In the special case of families of jacobians of curves Conjecture \ref{con:1} has been proved in \cite{bhdj}. The proof in this special case makes heavy use of the combinatorics of dual graphs of nodal curves, and so cannot readily be extended to families of abelian varieties, nor does it seem practical to reduce the general case to that of jacobians of curves.

\begin{remark} After the initial submission of the present paper to arxiv, two proofs of Conjecture \ref{con:1} have appeared, see \cite{brospearl} and \cite{BGHdJ2}. 
\end{remark}

\subsection{Overview of the paper}

We review the content of the different sections of this paper. In the preliminary section \ref{sec:three-faces-coin} we start by recalling the notions of $\qq$-line bundle and of Lear extension, and the Poincar\'e bundle on the product of a complex
torus and its dual, together with its associated metric. We
also recall the explicit description of the Poincar\'e bundle and its
metric on a family of polarized abelian
varieties. Also we study the period map associated
to a family of pointed polarized abelian varieties. Moreover we give a local expansion for the metric on the pullback of the Poincar\'e bundle under this period map. The functions that appear as the
logarithm of the norm of a section of the pullback of the Poincar\'e
bundle will be called norm-like functions.

In section \ref{technical} we study norm-like functions and give
several estimates on their growth and that of their derivatives. 
Finally in section \ref{sec:proof-main-results} we prove the main results on local integrability and positivity of the height jump. 

We fix some notation that we will use throughout. Let $r$ be a positive integer. For any commutative ring $R$ we
will denote by $\on{Col}_{r}(R)$ (respectively $\on{Row}_{r}(R)$,
$M_{r}(R)$ and $S_{r}(R)$) the set of column vectors of size $r$ with
entries in $R$ (respectively row vectors, matrices and symmetric matrices of size $r$-by-$r$). 

We denote by $S_{r}^{++}(\bb{R})\subset S_{r}(\bb{R})$ (respectively $S_{r}^{+}(\bb{R})\subset S_{r}(\bb{R})$) the cone of
positive definite (respectively positive semidefinite) symmetric
real matrices. We denote by $\mathbb{H}_r$ Siegel's upper half
space of rank $r$, and by $\mathbb{P}^r$ its compact dual. 
 
By a variety we mean an integral separated scheme of finite type over $\bb{C}$. 

\section{Preliminary results}
\label{sec:three-faces-coin}

\subsection{Lear extensions}	
\label{sec:lear-extensions}

We start by recalling the formalism of $\bb{Q}$-line bundles. Details can be found in \cite[Definition 2.10]{bhdj}.

\begin{definition}
Let $X$
be a complex variety. An (algebraic resp.\ analytic)
$\bb{Q}$-line bundle over $X$ is a pair $(L,r)$ where $L$ is an
(algebraic resp.\ analytic) line
bundle on $X$ and $r>0$ is a positive integer (informally, we think of
it as $L^{\otimes{1/r}}$). A metrized 
$\bb{Q}$-line bundle is a triple $(L,\aabs{-},r)$, where $(L,r)$ is a
$\bb{Q}$-line bundle and $\aabs{-}$ is a continuous metric on $L$. An
isomorphism of $\bb{Q}$-line bundles $(L_{1},r_{1})\to (L_{2},r_{2})$ is
an equivalence class of pairs $(a,f)$ where $a$ is a positive integer
and $f\colon L_{1}^{\otimes ar_2}\to L_{2}^{\otimes ar_1}$ is an
isomorphism, where the 
equivalence relation is generated by setting $(a,f) \sim (an, 
f^{\otimes n})$.    An  isomorphism of metrized line bundles is an
isometry if one (equivalently all) of the
corresponding morphisms of line bundles is an isometry. Every line
bundle $L$ gives rise to a $\bb{Q}$-line bundle $(L,1)$. Note that, if
$L$ is a line bundle and $r>1$ is an integer, then there is a
canonical isomorphism $(L^{\otimes r},r)\simeq (L,1)$. Moreover, if
$L$ is a torsion line bundle so that $L^{\otimes r}\simeq \ca{O}_{X}$, then
there is an isomorphism of $\bb{Q}$-line bundles $(L,1)\to
(\ca{O}_{X},r)$. If we do not need to specify the multiplicity $r$, a
$\bb{Q}$-line bundle will be denoted by a single letter. We note that
the group of isomorphism classes of $\bb{Q}$-line bundles on $X$ is equal to $\Pic(X)
\otimes_{\bb{Z}} \bb{Q}$. 

We denote
\begin{displaymath}
 \Rat_{\qq}(X)=\left(\oo(X)\setminus\{0\},\times\right)\otimes \qq. 
\end{displaymath}
If $(L,r)$ is a $\qq$-line bundle, a $\qq$-rational section of $(L,r)$ (or
rational section for short) is an
equivalence class of symbols
\begin{math}
  s^{\frac{1}{rd}},
\end{math}
where $s$ is a non-zero rational section of $L^{\otimes d}$. Two
symbols $s_{1}^{1/rd_{1}}$ and $s_{2}^{1/rd_{2}}$ 
are equivalent if
\begin{displaymath}
  (s_{1})^{\otimes d_{2}}
  =(s_{2})^{\otimes d_{1}}
\end{displaymath}
as a section of $L^{\otimes d_{1}d_{2}}$.
The space of rational sections of $(L,r)$ is a torsor over
$\Rat_{\qq}(X)$. Moreover, if $s$ and $s'$ are rational sections of
$(L,r)$ and $(L',r')$ then $s\otimes s'$ is a rational section of
$(L^{\otimes r'}\otimes (L')^{\otimes r},rr')$, but there is no additive structure of
rational sections.

The \emph{divisor of the section} $s^{\frac{1}{rd}}$ is
\begin{displaymath}
  \divisor(s^{\frac{1}{rd}})=\frac{1}{rd}\divisor(s).
\end{displaymath}
\end{definition}

\begin{definition}[Lear extension]
Let $X \sub \ov{X}$ be an open immersion of smooth complex varieties,
such that the boundary divisor $D \defeq \ov{X} \setminus X$ has normal
crossings, and $L$ a line bundle on $X$ with continuous metric
$\aabs{-}$. A \emph{Lear extension} of $L$ is a $\bb{Q}$-line bundle
$(\ca{L},r)$ on $\ov{X}$ together with an isomorphism $\alpha \colon
(L,1)\to (\ca{L},r)|_{X}$ and a continuous metric on
$\ca{L}|_{\ov{X} \setminus D^\mathrm{sing}}$ such that the isomorphism
$\alpha $ is an isometry. Since $D^\mathrm{sing}$ has codimension at
least $2$ in $\ov{X}$, if a Lear extension exists then it is unique up to a
unique isomorphism. If a Lear extension of $L$ exists we denote it by 
$\Lear{L, \aabs{-}}_{\ov{X}}$. Note that the isomorphism class of the
Lear extension of $L$ depends not only on $L$ but also on the metric on $L$. 

If $s$ is a rational section of $L$, writing $s=(s^{\otimes r})^{\frac{1}{r}}$, it can
also be seen as a 
rational section of $\Lear{L, \aabs{-}}_{\ov{X}}$. We will denote by
$\on{div}_{X}(s)$ the divisor of $s$ as a rational section of $L$
and by $\on{div}_{\ov X}(s)$ the divisor of $s$ as a rational section of
$\Lear{L,  \aabs{-}}_{\ov{X}}$.  
\end{definition}

\subsection{Poincar\'e bundle and its metric}
\label{sec:poincare-bundle}

In this section we recall the definition of the Poincar\'e bundle and
its biextension metric. Moreover we make the biextension metric explicit in the case of families of polarized abelian varieties.
 
In the literature one can find small discrepancies in the description
of the Poincar\'e bundle, see Remark \ref{rem:1}. These discrepancies
can be traced back to two different choices of the identification of a
complex torus with its bidual. Moreover, there are also different
conventions regarding the sign of the polarization of the abelian
variety. Since one of our main results is a positivity result it is worthwhile
to fix all the signs to avoid these ambiguities.

\medskip
\noindent
\emph{Complex tori and their duals.}
Let $g\ge 0$ be a non-negative integer, $V$ a $g$-dimensional complex vector
space and $\Lambda \subset V$ a
rank $2g$ lattice. The quotient $T=V/\Lambda $ is a compact complex
torus. It is a K\"ahler complex manifold, but in general it is
not an algebraic variety.

We recall the construction of the dual torus of $T$. We denote by
$V^{\ast}=\on{Hom}_{\ov{\bb{C}}}(V,\bb{C})$ the space of 
antilinear forms $w\colon V\to \bb{C}$. This is not the dual
$V^{\vee}$ of $V$. In fact, let $\ov V$ denote the abelian group $V$
with the complex structure $\ov \cdot$ given by
\begin{displaymath}
  \alpha \,\ov \cdot \,v = \ov \alpha \cdot v.
\end{displaymath}
Then $V^{\ast}=\ov V^{\vee}$.

The bilinear form
\begin{displaymath}
  \langle\cdot,\cdot\rangle\colon V^{\ast}\times V\to \bb{R},\ \langle
  w,z \rangle \defeq \on{Im}(w(z)) 
\end{displaymath}
is non-degenerate. Thus
\begin{displaymath}
  \Lambda ^{\vee}\defeq \{\lambda \in V^{\ast}\mid \langle
  \lambda ,\Lambda  \rangle\subset \bb{Z}\}
\end{displaymath}
is a lattice of $V^{\ast}$. The lattice $\Lambda ^{\vee}$ is
canonically isomorphic to the dual of the lattice $\Lambda $. The
quotient $T^{\vee}=V^{\ast}/\Lambda ^{\vee}$ is 
again a compact complex torus, called the \emph{dual torus} of $T$. 

We
can identify $V$ with $\on{Hom}_{\ov{\bb{C}}}(V^{\ast},\bb{C})$ by the
rule 
\begin{equation}\label{eq:3}
  z(w)=\ov{w(z)}
\end{equation}
so that the bilinear pairing 
\begin{displaymath} 
  (V^{\ast}\oplus V)\otimes (V^{\ast}\oplus V)\to \rr,\quad
  (w,z)\otimes(w',z')\mapsto \on{Im}(w(z'))+\on{Im}(z(w'))  
\end{displaymath}
is antisymmetric. With this identification the double dual
$(T^{\vee})^{\vee}$ gets
identified with $T$. 

The points of $T^{\vee}$ define homologically trivial line bundles on
$T$ giving an isomorphism of $T^{\vee}$ with
$\on{Pic^{0}}(T)$. We recall this construction. Let $\bb{C}_{1}$
denote the subgroup of $\bb{C}^{\times}$ of elements of norm one.
Let $w\in
V^{\ast}$. Denote by $[w]$ its class in $T^{\vee}$ and by
$\chi_{[w]}\in \on{Hom}(\Lambda ,\bb{C}_{1})$ 
the character 
\begin{equation}\label{eq:4}
  \chi_{[w]}(\mu)=\exp(2\pi i\langle w,\mu \rangle). 
\end{equation}
The line bundle associated to $[w]$ is the line bundle $L_{[w]}$ with automorphy
factor $\chi_{[w]}$. In other words, consider the action of $\Lambda $
on $V\times \bb{C}$ given by
\begin{displaymath}
  \mu (z,t)=(z+\mu ,t\exp(2\pi i \langle w,\mu \rangle)).
\end{displaymath}
Write $L_{[w]}=(V\times \bb{C})/\Lambda $. The projection $V\times
\bb{C}\to V$ induces a map $L_{[w]}\to T$. It is easy to check that
$L_{[w]}$ is a holomorphic line bundle on $T$ that only depends on
the class $[w]$. Note that the identification between $T^{\vee}$ and
$\on{Pic^{0}}(T)$ is not completely canonical because it depends on a
choice of sign. We could equally well have used the character
$\chi_{[w]}^{-1}$.  

\medskip
\noindent
\emph{The Poincar\'e bundle.} 
Note that, although the cocycle \eqref{eq:4} is not holomorphic
in $w$, the line bundle
$L_{[w]}$ varies holomorphically with $w$, defining a holomorphic line
bundle on $ T\times T^{\vee}$ called the Poincar\'e bundle. See
\cite[\S~2.5]{bl} for details.  

\begin{definition}\label{def:2}
  A \emph{Poincar\'e (line) bundle} $\ca{P}$ is a holomorphic
  line bundle on $T\times T^{\vee}$ that satisfies
  \begin{enumerate}
  \item the restriction $\ca{P}|_{T\times \{[w]\}}$ is isomorphic to $L_{[w]}$;
  \item the restriction $\ca{P}|_{ \{0\}\times T^{\vee}}$ is trivial. 
  \end{enumerate}
  A \emph{rigidified Poincar\'e bundle} is a Poincar\'e bundle
  together with an isomorphism $\ca{P}|_{\{0\}\times T^{\vee}}
  \isom \ca{O}_{ \{0\}\times T^{\vee}}$.
\end{definition}

To prove the existence of a Poincar\'e bundle, consider the
map
\begin{displaymath}
 a_{\ca{P}}\colon (\Lambda \times \Lambda^{\vee})\times (V\times
V^{\ast}) \to \bb{C}^{\times} 
\end{displaymath}
given by
\begin{equation}\label{eq:16}
  a_{\ca{P}}((\mu,\lambda ),(z,w))
  =\exp\Big(\pi \big((w+\lambda )(\mu )+\overline{\lambda(z)}\big)\Big).
\end{equation}
This map is holomorphic in $z$ and $w$. Moreover, since for $(\mu
,\lambda )\in \Lambda \times \Lambda ^{\vee}$, 
\begin{displaymath}
  \langle \lambda ,\mu \rangle=\frac{1}{2i}(\lambda (\mu
  )-\overline{\lambda (\mu )})\in \bb{Z},
\end{displaymath}
the map $a_{\ca{P}}$ is a cocycle for the additive action of $\Lambda
\times \Lambda ^{\vee}$ on $V\times V ^{\ast}$. Hence, it is an automorphy factor that
defines a holomorphic line bundle $\ca{P}$ on $T\times T ^{\vee}=V\times
V ^{\ast}/\Lambda \times \Lambda  ^{\vee}$. 

For a fixed $w\in V^{\ast}$,
\begin{displaymath}
  a_{\ca{P}}((\mu,0),(z,w))
  =\exp(\pi w(\mu)).
\end{displaymath}
This last cocycle is equivalent to the cocycle \eqref{eq:4}. Indeed,
\begin{displaymath}
  \exp(\pi w(\mu ))\exp(\pi \overline{w(z+\mu)})^{-1}\exp(\pi  \overline{w(z)})= 
  \exp(2\pi i \langle w,\mu \rangle ),
\end{displaymath}
and the function $z\mapsto \exp(\pi  \overline{w(z)})$ is holomorphic
in $z$. 
Thus the restriction $\ca{P}|_{T\times \{[w]\}}$ is isomorphic to
$L_{[w]}$. Moreover
\begin{displaymath}
  a_{\ca{P}}((0,\lambda),(0,w))=1,
\end{displaymath}
which implies that the restriction $\ca{P}|_{\{0\}\times T^{\vee}}$ is
trivial. The uniqueness of the Poincar\'e bundle follows from the
seesaw principle (see \cite[Appendix~A]{bl}).

We conclude
\begin{proposition}\label{prop:6}
  A Poincar\'e bundle exists and is unique up to isomorphism. A
  rigidified Poincar\'e bundle exists and is unique up to a unique
  isomorphism.   
\end{proposition}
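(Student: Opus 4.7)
The two assertions (existence and uniqueness) are essentially already carried out in the discussion preceding the proposition; what remains is to assemble them cleanly, with the uniqueness part resting on a standard seesaw argument.

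For existence, the plan is to verify directly that the automorphy factor $a_{\ca{P}}$ in \eqref{eq:16} yields a Poincar\'e bundle. The cocycle condition for the additive action of $\Lambda\times\Lambda^{\vee}$ on $V\times V^{\vee}$ has essentially been verified above (using that $\lambda(\mu)-\overline{\lambda(\mu)}\in 2\pi i\bb{Z}$), so $a_{\ca{P}}$ does define a holomorphic line bundle $\ca{P}$ on $T\times T^{\vee}$. Condition (1) of Definition \ref{def:2} then follows from the coboundary identity
\[
  \exp(\pi w(\mu))\,\exp\bigl(\pi\overline{w(z+\mu)}\bigr)^{-1}\exp\bigl(\pi\overline{w(z)}\bigr)=\exp(2\pi i\langle w,\mu\rangle),
\]
with $z\mapsto\exp(\pi\overline{w(z)})$ a nonvanishing holomorphic function on $V$, exhibiting an isomorphism between $\ca{P}|_{T\times\{[w]\}}$ and $L_{[w]}$. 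Condition (2) is immediate from $a_{\ca{P}}((0,\lambda),(0,w))=1$, which shows that $\ca{P}|_{\{0\}\times T^{\vee}}$ is trivialized by the constant section $1$.

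For uniqueness up to (non-unique) isomorphism, I would use the seesaw principle. Given two Poincar\'e bundles $\ca{P},\ca{P}'$, the line bundle $\ca{Q}\defeq\ca{P}\otimes(\ca{P}')^{-1}$ on the compact complex manifold $T\times T^{\vee}$ has trivial restriction to every fibre $T\times\{[w]\}$ (by condition (1)) and trivial restriction to $\{0\}\times T^{\vee}$ (by condition (2)). Since $T$ is compact and connected, the family of line bundles on $T$ parametrized by $T^{\vee}$ given by $\ca{Q}$ is constant (equal to the trivial class), so $\ca{Q}\cong p_2^{*}M$ for a line bundle $M$ on $T^{\vee}$; restricting to $\{0\}\times T^{\vee}$ identifies $M$ with the trivial bundle, so $\ca{Q}$ itself is trivial, giving $\ca{P}\cong\ca{P}'$.

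For the rigidified case, the main point is that the only ambiguity in the previous step is a global scalar. Concretely, any two isomorphisms $\phi_1,\phi_2\colon\ca{P}\isom\ca{P}'$ differ by an automorphism of $\ca{P}'$; since $T\times T^{\vee}$ is compact and connected, $\on{Aut}(\ca{P}')=\bb{C}^{\times}$. If $\phi_1,\phi_2$ are both compatible with the given rigidifications of $\ca{P}|_{\{0\}\times T^{\vee}}$ and $\ca{P}'|_{\{0\}\times T^{\vee}}$, the scalar in question must be $1$, proving uniqueness. Existence of a compatible isomorphism then follows by taking any isomorphism provided by the unrigidified case and rescaling by the unique scalar that matches up the two trivializations along $\{0\}\times T^{\vee}$. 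No step here is a serious obstacle; the only thing to be careful about is the sign/factor normalization in \eqref{eq:16}, which is why the coboundary computation in the first paragraph needs to be checked explicitly.
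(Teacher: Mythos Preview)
Your proposal is correct and follows essentially the same route as the paper: existence via the explicit cocycle $a_{\ca{P}}$ and the coboundary computation showing $\ca{P}|_{T\times\{[w]\}}\cong L_{[w]}$, and uniqueness via the seesaw principle. The paper leaves the rigidified uniqueness-up-to-unique-isomorphism step entirely implicit, whereas you spell out the $\on{Aut}(\ca{P}')=\bb{C}^{\times}$ argument; this is a welcome addition rather than a departure.
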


\begin{remark}\label{rem:1}
Using the above identification of $T$ with the dual torus
of $T^{\vee}$ we have that, for a fixed $z\in V$, the restriction
$\ca{P}|_{\{[z]\}\times  T^{\vee}}$ agrees with $L_{[z]}$. In fact    
\begin{displaymath}
  a_{\ca{P}}((0,\lambda ),(z,w))
  =\exp(\pi \overline{\lambda (z)}),
\end{displaymath}
and, arguing as in the proof of Proposition \ref{prop:6}, this cocycle 
is equivalent to the cocycle
\begin{displaymath}
  \exp(2\pi i \on{Im}(\ov{\lambda(z) }))=\exp(2\pi i \langle z,\lambda
  \rangle ).
\end{displaymath}
 Note that the definition of the Poincar\'e bundle in \cite[\S
 3.2]{hainbiext} states that $\ca{P}|_{\{[z]\}\times
   T^{\vee}}=L_{[-z]}$. The discrepancy between \cite{hainbiext} and
 the current paper is due to a different choice of
 identification between $T$ and $(T^{\vee})^{\vee}$.
\end{remark}

\begin{remark}\label{rem:2} As we will see later, in equation \eqref{eq:13}, the
  cocycle \eqref{eq:16} is not optimal because it does not vary
  holomorphically in holomorphic families of tori.
\end{remark}

\medskip
\noindent
\emph{Group theoretical interpretation of the
  Poincar\'e bundle.}
We next give a group theoretic description of
the Poincar\'e 
bundle. We start with the additive real Lie group $W$ given by
\begin{displaymath}
  W= V\times V^{\ast}.
\end{displaymath}
Denote by $\wt {W}$ the semidirect product $\wt{W}=W\ltimes
\bb{C}^{\times}$,  where the product in $\wt{W}$ is given by 
\begin{equation} \label{eq:7}
  \big((z,w),t\big)\cdot\big((z',w'),t'\big)=
  \big((z+z',w+ w'),tt'\exp(2\pi i \langle w,z'\rangle )\big). 
\end{equation}
Clearly the group
\begin{equation}\label{eq:15}
  W_{\bb{Z}}=\Lambda \times \Lambda ^{\vee}
\end{equation}
is a subgroup of $\wt{W}$.

Consider the space
\begin{equation}\label{eq:14}
  P\defeq V\times V^{\ast}\times 
  \bb{C}^{\times} 
\end{equation}
and the action of $\wt{W}$ on $P$ by biholomorphisms given by
\begin{equation} \label{eq:6}
  \big((\mu,\lambda ),t\big)\cdot ((z,w),s)
  =\big(z+\mu,w+\lambda,
  ts \exp(\pi (w+\lambda )(\mu )+\pi \overline{\lambda (z)})\big). 
\end{equation}
The projection $P\to V\times V^{\ast}$ induces a map
$W_{\bb{Z}}\backslash P\to T\times T^{\vee}$. The action of
$\bb{C}^{\times }$ on $P$ by acting on the third factor provides
$W_{\bb{Z}}\backslash P$ with a structure of
$\bb{C}^{\times}$-bundle over $T\times T^{\vee}$. Denote by 
$\ca{P}_{T}=(W_{\bb{Z}}\backslash
P)\underset{\bb{C}^{\times}}{\times } \bb C$
the associated holomorphic line bundle. The structure of
$P$ as a product space induces a canonical rigidification
$\ca{P}_{T}|_{ \{0\}\times T^{\vee}}=\ca{O}_{\{0\}\times T^{\vee}}$.

\begin{proposition}\label{prop:2} The line bundle
  $\ca{P}_{T}$ is a rigidified Poincar\'e line bundle. 
\end{proposition}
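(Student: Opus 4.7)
The plan is to unpack the group-theoretic construction and reduce the proposition to the cocycle computation already carried out for $a_{\ca P}$ in equation \eqref{eq:16}. Concretely, the semidirect product structure of $\wt W$ is engineered precisely so that the action \eqref{eq:6} descends to $W_{\bb Z}\backslash P$, and the induced cocycle on $V\times V^{\vee}$ coincides with $a_{\ca P}$; hence $\ca P_{T}$ is the line bundle previously shown to be a Poincar\'e bundle.

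First I would verify the two routine algebraic facts. \emph{Step 1:} the product \eqref{eq:7} is associative and gives $\wt W$ the structure of a complex Lie group with $W=V\times V^{\vee}$ as a real subgroup and $\bb C^{\times}$ as the normal factor. \emph{Step 2:} the formula \eqref{eq:6} defines a left action of $\wt W$ on $P$ by biholomorphisms; this is a direct check against \eqref{eq:7}, using that the $\bb C^{\times}$-factor in $\wt W$ acts on the third coordinate by multiplication, and that the $W$-part acts via the cocycle $\exp(\pi(w+\lambda)(\mu)+\pi\overline{\lambda(z)})$.

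Next I would restrict to $W_{\bb Z}$. The map $(\mu,\lambda)\mapsto((\mu,\lambda),1)$ realizes $\Lambda\times\Lambda^{\vee}$ as a subgroup of $\wt W$: indeed, for $\mu,\mu'\in\Lambda$ and $\lambda,\lambda'\in\Lambda^{\vee}$ one has $\langle\lambda,\mu'\rangle\in\bb Z$, so the $\bb C^{\times}$-contribution $\exp(2\pi i\langle\lambda,\mu'\rangle)$ in \eqref{eq:7} is $1$. \emph{Step 3:} Reading off the third component of \eqref{eq:6} with $t=1$, the action of $(\mu,\lambda)\in W_{\bb Z}$ on $P$ is
\[
(\mu,\lambda)\cdot((z,w),s)=\bigl((z+\mu,w+\lambda),s\cdot a_{\ca P}((\mu,\lambda),(z,w))\bigr).
\]
Hence the $\bb C^{\times}$-bundle $W_{\bb Z}\backslash P\to T\times T^{\vee}$ is precisely the one defined by the automorphy factor $a_{\ca P}$, and the associated line bundle $\ca P_{T}$ is, by the discussion following \eqref{eq:16}, a Poincar\'e bundle: its restriction to $T\times\{[w]\}$ is $L_{[w]}$, and its restriction to $\{0\}\times T^{\vee}$ is trivial.

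Finally, \emph{Step 4:} the rigidification. The product decomposition $P=V\times V^{\vee}\times\bb C^{\times}$ gives a tautological section $(z,w)\mapsto((z,w),1)$ of the $\bb C^{\times}$-bundle over $\{0\}\times V^{\vee}$. On this locus $a_{\ca P}((0,\lambda),(0,w))=1$, so this section is $W_{\bb Z}$-invariant and descends to a nowhere-vanishing section of $\ca P_{T}|_{\{0\}\times T^{\vee}}$, which is the rigidification. Combining Steps~1--4 with Proposition~\ref{prop:6} yields the claim. The only step requiring real care is Step~2: keeping track of the real-bilinear versus $\bb C$-antilinear terms in $(w+\lambda)(\mu)+\overline{\lambda(z)}$ when verifying that \eqref{eq:6} is compatible with \eqref{eq:7}; this bookkeeping is the main (minor) obstacle, but it is forced by the requirement that the cocycle descend to $a_{\ca P}$.
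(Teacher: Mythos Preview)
Your proposal is correct and follows exactly the paper's approach: the paper's proof is the single sentence ``From the explicit description of the cocycle \eqref{eq:16} and of the action \eqref{eq:6} we deduce that $\ca{P}_{T}$ is a Poincar\'e bundle,'' which is precisely your Step~3, with the rigidification already noted just before the statement. Your Steps~1, 2, and~4 simply spell out details the paper leaves implicit.
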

\begin{proof}
From the explicit
description of the cocycle \eqref{eq:16} and of the action
\eqref{eq:6}
we deduce that $\ca{P}_{T}$ is a Poincar\'e bundle.
\end{proof}

\medskip
\noindent
\emph{The metric of the Poincar\'e bundle.}
The Poincar\'e bundle has a metric that is determined up to constant
by the condition that its curvature form is invariant under
translation. On a rigidified Poincar\'e bundle, with given rigidification
$\ca{P}_{T}|_{ \{0\}\times T^{\vee}} \isom  \ca{O}_{\{0\}\times
  T^{\vee}}$, the constant is 
fixed by imposing the condition  $\| 1\|=1$. We now describe
explicitly this metric.

Let  $\wt{W}_{1}=W\ltimes
\bb{C}_{1}$ with the product described before.
Denote by $\ca{P}^{\times}_{T}$ the Poincar\'e bundle
with the zero section deleted. Since
$\ca{P}_{T}^{\times}= W_{\bb{Z}}\backslash P$, the invariant metric of
$\ca{P}_{T}$ is described by the unique function
$\|\cdot\|\colon P\to \bb{R}_{>0}$ satisfying the conditions
\begin{enumerate}
\item (Norm condition) For $(z,w,s )\in P$, we have
  \begin{displaymath}
    \|(z,w,s )\|=|s |\|(z,w,1)\|.
  \end{displaymath}
\item (Invariance under $\wt{W}_{1}$) For $g\in \wt{W}_{1}$ and $x\in
  P$, we have 
  \begin{displaymath}
    \|g\cdot x\|=\|x\|
  \end{displaymath}
\item (Normalization) $\|(0,0,1)\|=1$.
\end{enumerate}
Using the explicit description of the action given in \eqref{eq:6}, we
have that
\begin{displaymath}
  (z,w,s)=(z,w,1)\cdot (0,0,s\exp(-\pi w(z))),
\end{displaymath}
from which
one easily derives that the previous conditions imply 
\begin{equation}\label{eq:8}
  \|(z,w,s)\|^{2}= |s|^{2}\exp\Big(-\pi \big(w(z)+\overline{w(z)}\big)\Big).
\end{equation}

\medskip
\noindent
\emph{Holomorphic families of complex tori.} Let $X$ be a
complex manifold and $\ca{T} \to X$ a holomorphic family 
of dimension $g$ complex tori.
This means that $\ca{T}$ is defined by
a holomorphic vector bundle $\VV$ of rank $g$ on $X$ and an integral local
system $\Lambda \subset \VV$ of rank $2g$ such that, for each $s\in X$, the fiber $\Lambda _{s}$ is a lattice in $\VV_{s}$ and the flat
sections of $\Lambda $ are holomorphic sections of $\VV$. Indeed
$\Lambda $ is the local system  $s \mapsto H_{1}(\ca{T}_{s},\bb{Z})$
and $\VV$ the holomorphic vector bundle $s\mapsto
H_{1}(\ca{T}_{s},\bb{C})/ F^{0}H_{1}(\ca{T}_{s},\bb{C})$. 

 We now want to give to the dual family of compact tori a holomorphic
 structure. That is, we want to
construct a holomorphic family of compact tori $\ca{T}^{\vee}$ with a
canonical identification
$(\ca{T}^{\vee})_{s}=(\ca{T}_{s})^{\vee}$. This construction is not
completely obvious because the vector spaces $(\VV_{s})^{\ast}$ vary
anti-holomorphically with $s$. We will use the lattice $\Lambda $ to
define a holomorphic structure on this family of vector spaces.

Write $\HH_{\cc}=\Lambda \otimes \ca{O}_{X}$. It is a holomorphic
vector bundle, with a holomorphic surjection $\HH_{\cc}\to \VV$ and an
integral structure that determines a complex conjugation in
$\HH_{\cc}$. The kernel $\ca{F}^{0}= \Ker(\HH_{\cc}\to \VV)$ is a
holomorphic vector bundle. For every $s\in X$, the surjection
$\HH_{\cc}\to \VV$ allows us to identify $\ov {\ca{F}^{0}}_{s}$ with
$\VV_{s}$, hence $\ca{F}^{0}_{s}$ with $\ov \VV_{s}$. Let $\Lambda
^{\vee}$ be the dual local system to $\Lambda $. On the dual vector bundle
$\HH^{\vee}= \Lambda ^{\vee}\otimes  \ca{O}_{X}$ consider the
orthogonal complement $(\ca{F}^{0})^{\perp}$ to $\ca{F}^{0}$. Then
$(\ca{F}^{0})^{\perp}$ is isomorphic with the dual vector bundle
$\VV^{\vee}$. The quotient
\begin{math}
  \HH^{\vee}/(\ca{F}^{0})^{\perp}
\end{math}
is a holomorphic vector bundle that we denote by $\VV^{\ast}$. The
identification $\ca{F}^{0}_{s}=\ov \VV_{s}$ gives us the equality
\begin{displaymath}
  (\VV^{\ast})_{s}=
  (\HH^{\vee}/(\ca{F}^{0})^{\perp})_{s}=(\ca{F}^{0}_{s})^{\vee}=
  (\ov \VV_{s})^{\vee}=(\VV_{s})^{\ast},
\end{displaymath}
 that explains the notation. 

Then the dual family of tori is defined as 
\begin{displaymath}
  \ca{T}^{\vee}=\VV^{\ast}/\Lambda ^{\vee}.
\end{displaymath}

Let $U\subset X$ be a small enough open subset such
that the restriction of $\ca{T}$ to $U$ is topologically
trivial. Choose $s_0\in U$ and an integral basis  
$$(a,b)=(a_{1},\dots,a_{g},b_{1},\dots,b_{g})$$ of
$\Lambda _{s_{0}}$ such that $(a_{1},\dots,a_{g})$ is
a complex basis of $\VV_{s_0}$. By abuse of
notation, we denote by $a_{i},b_{i}$, $i=1,\dots,g$ the corresponding
flat sections of $\Lambda $. We can see them as holomorphic sections
of $\ca{H}_{\bb{C}}$ and we will also denote by $a_{i}, b_{i}$ their
images in $\VV$. After shrinking $U$ if
necessary, we can assume that the sections $a_{i}$ form a frame of
$\VV$, thus we can write
  \begin{equation}\label{eq:10}
    (b_{1},\dots,b_{g})=(a_{1},\dots, a_{g})\Omega 
  \end{equation}
 for a holomorphic map $\Omega \colon U \to M_{g}(\bb{C})$. We call
 $\Omega$ the period matrix of the variation on the basis
 $(a,b)$. Note that 
 condition \eqref{eq:10} is equivalent to saying that
 $\ca{F}^{0}\subset \ca{H}_{\bb{C}}$ is
 generated by the columns of the matrix
 \begin{displaymath}
   \begin{pmatrix}
     -\Omega \\
     \on{Id}
   \end{pmatrix}.
 \end{displaymath}
Writing $\HH_{\bb{R}}$ for the real vector subbundle of $\HH_{\bb{C}}$ formed by sections that are invariant under complex conjugation, 
we have that $\ca{F}^{0}\cap \HH_{\bb{R}}=0$. This implies that $\Im
\Omega$ is non-degenerate.  The complex basis
 $(a_{1},\dots,a_{g})$ gives us an  
 identification of $\VV|_U$ with the trivial vector bundle
 $\on{Col}_{g}(\bb{C})$ and the basis
 $(a,b)$ identifies $\Lambda $ with the trivial
 local system $\on{Col}_{g}(\bb{Z})\oplus \on{Col}_{g}(\bb{Z})$. With
 these identifications, the
inclusion $\Lambda \to \VV$ is given by 
\begin{displaymath}
  (\mu _{1},\mu _{2})\mapsto \mu=\mu _{1}+\Omega \mu _{2}.
\end{displaymath}

Let now $(a^{\ast},b^{\ast})=(a_{1}^{\ast},\dots,a_{g}^{\ast},b_{1}^{\ast},\dots
,b_{g}^{\ast})$ be the basis of $\Lambda ^{\vee}_{s_{0}}$ dual to
$(a,b)$.  As before we extend the elements
$a_{i}^{\ast},b_{i}^{\ast}$, $i=1,\dots,g$ to flat sections of
$\Lambda $ over $U$.  Then $b_{1}^{\ast},\dots ,b_{g}^{\ast}$ is a frame 
of $\VV^{\ast}$. One can check that, on
$\ca{V}^{\ast}$, the equality
\begin{displaymath}
  (a_{1}^{\ast},\dots,a_{g}^{\ast})=-(b_{1}^{\ast},\dots,b_{g}^{\ast})\Omega ^{t}
\end{displaymath}
holds. Thus if we identify $\VV^{\ast}$ with the trivial vector bundle
$\on{Row}_{g}(\bb{C})$
using the basis $(b^{\ast})$ and $\Lambda ^{\vee}$ with the trivial
local system
$\on{Row}_{g}(\bb{Z})\oplus \on{Row}_{g}(\bb{Z})$ using the basis
$(a^{\ast},b^{\ast})$ we obtain that the
inclusion $\Lambda^{\vee}\to \VV^{\ast}$ is given by
\begin{equation}\label{eq:9}
  (\lambda _{1},\lambda _{2})\mapsto \lambda =-\lambda _{1}\Omega +\lambda
  _{2}. 
\end{equation}
In the fixed bases, one can check that 
the pairing between $\ca{V}^{\ast}$ and $\ca{V}$
is given by 
\begin{equation}\label{eq:11}
  w(z)=-w (\Im \Omega)^{-1} \bar {z},
\end{equation}
where $w \in \on{Row}_{g}(\bb{C})$ and $z \in \on{Col}_{g}(\bb{C})$,
while 
the pairing between the
lattice $\Lambda $ and its dual $\Lambda ^{\vee}$ is given by
\begin{displaymath}
  \langle (\lambda _{1},\lambda _{2}),(\mu _{1},\mu _{2})\rangle =
  \lambda _{1}\mu _{1}+ \lambda _{2}\mu _{2},
\end{displaymath}
where $\lambda_1, \lambda_2 \in \on{Row}_{g}(\bb{Z})$ and $\mu_1,
\mu_2 \in \on{Col}_{g}(\bb{Z})$. Clearly the pairing between $\Lambda
$ and $\Lambda ^{\vee}$ has integer values.

The cocycle $a_{\ca{P}}$ from equation
\eqref{eq:16} can now be written down explicitly as 
\begin{multline}\label{eq:13}
  a_{\ca{P}}((\mu _{1},\mu _{2}),(\lambda _{1},\lambda _{2}),(z,w))\\=
  \exp(-\pi ((w-\lambda _{1}\Omega +\lambda _{2})(\Im \Omega)^{-1}(\mu
  _{1}+\bar \Omega 
  \mu _{2})+ (-\lambda _{1}\bar \Omega +\lambda _{2})(\Im \Omega)^{-1}z)),
\end{multline}
which is not holomorphic with respect to $\Omega $. Thus it
does not give us on the nose a holomorphic Poincar\'e bundle in
families. Nevertheless 
the construction of the Poincar\'e bundle can be given a holomorphic structure.

\begin{proposition}\label{prop:1}
  Let $X$ be a complex manifold and $\ca{T}\to X $ a holomorphic
  family of dimension $g$ complex tori. Let $\nu_0\colon X\to
  \ca{T}\underset{X}{\times }\ca{T}^{\vee}$ be the zero section. 
Then
  \begin{enumerate}
  \item the fiberwise dual tori form a
    holomorphic family of complex tori $\ca{T}^{\vee}\to X$;
  \item \label{item:2} on $\ca{T}\underset{X}{\times }\ca{T}^{\vee}$ there is a holomorphic
    line bundle $\ca{P}$, together with an isomorphism
    $\nu_0^{\ast}\ca{P}\isom \ca{O}_{X} $, called the
    rigidified Poincar\'e bundle, which is unique up to a unique
    isomorphism, and is
    characterized by the property that
for every point $p\in X$, the restriction 
      $\ca{P}|_{\ca{T}_{p}\times \ca{T}^{\vee}_{p}}$ is the
      rigidified Poincar\'e bundle of $\ca{T}_{p}$; 
  \item there is a unique metric on $\ca{P}$ that induces the trivial
    metric on $\nu_0^{\ast}\ca{P}=\ca{O}_{X} $ and whose curvature is
    fiberwise translation invariant.
  \end{enumerate}
\end{proposition}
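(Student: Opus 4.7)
The plan is to handle the three parts in order. Assertion (1) is essentially bookkeeping on the construction already given: it suffices to check that $\VV^\vee = \HH^\vee/(\ca{F}^0)^\perp$ is a holomorphic vector bundle of rank $g$ (the annihilator of a holomorphic subbundle is holomorphic), and that $\Lambda^\vee$ projects to a rank $2g$ local system of full lattices in $\VV^\vee$, which follows fiberwise from the non-degeneracy of the pairing noted just before \eqref{eq:4}.

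For the uniqueness in (2), I would use a cohomology-and-base-change (seesaw) argument. Let $\ca{P}_1, \ca{P}_2$ be two rigidified Poincar\'e bundles and set $\ca{L} = \ca{P}_1 \otimes \ca{P}_2^{-1}$. By Proposition~\ref{prop:6}, $\ca{L}$ is trivial on each fiber of $\pi \colon \ca{T} \times_X \ca{T}^\vee \to X$, and is rigidified along $\nu_0$. Then $\pi_*\ca{L}$ is an invertible sheaf on $X$ with $\pi^*\pi_*\ca{L} \isom \ca{L}$, and the rigidification canonically identifies $\pi_*\ca{L} \cong \nu_0^*\ca{L} \cong \ca{O}_X$, producing the unique rigidification-preserving isomorphism $\ca{P}_1 \isom \ca{P}_2$.

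For existence in (2), the strategy is local construction followed by gluing via this uniqueness. Over a small $U$ equipped with a period matrix $\Omega$ as in \eqref{eq:10}, the cocycle \eqref{eq:16} fails to be holomorphic in $\Omega$, as already flagged in Remark~\ref{rem:2}. The plan is to multiply it by a smooth coboundary $\delta f$ on $V \times V^\vee$, where $f(z,w,\Omega)$ is the exponential of a quadratic polynomial in $(z,w)$ with $\Omega$-dependent coefficients, chosen to absorb the antiholomorphic $\bar\Omega$-terms arising from $\overline{\lambda(z)}$ and the $(\Im\Omega)^{-1}$ in the pairing \eqref{eq:11}; this is the family analogue of the Appell--Humbert normalization. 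Writing down this $f$ explicitly and verifying that the modified cocycle is holomorphic in $(z,w,\Omega)$, satisfies the cocycle identity for $\Lambda \times \Lambda^\vee$, and restricts on each fiber to the rigidified Poincar\'e bundle of Proposition~\ref{prop:6}, is the main technical obstacle. Once the holomorphic cocycle is in place, the preceding uniqueness produces canonical gluing data on overlaps, and these automatically satisfy the cocycle condition and yield a global rigidified Poincar\'e bundle $\ca{P}$.

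For (3), the formula \eqref{eq:8}, read using the pairing \eqref{eq:11}, depends smoothly on $\Omega$ and thereby defines a Hermitian metric on the local Poincar\'e bundle whose fiberwise restriction is the unique translation-invariant metric normalized by $\|1\|=1$ along $\nu_0$. Uniqueness of the global metric is easier: if $h_1, h_2$ are two such metrics then the positive ratio $h_1/h_2$ satisfies $\partial \bar{\partial} \log(h_1/h_2) = \omega_1 - \omega_2$, and on each fiber the $\omega_i$ are translation-invariant representatives of $c_1(\ca{P}_{\text{fiber}})$, hence equal, since any two cohomologous translation-invariant forms on a compact torus coincide. Thus $\log(h_1/h_2)$ is pluriharmonic and therefore constant on each (compact) fiber, so $h_1/h_2$ descends to $X$; the rigidification forces $h_1/h_2 \equiv 1$. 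The local metrics then glue to the required global canonical metric on $\ca{P}$.
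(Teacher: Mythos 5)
Your proposal follows essentially the same route as the paper for all three parts: part~(1) by inspecting the construction of $\VV^\vee$ and $\Lambda^\vee$, part~(2) by a local cocycle modification glued via a relative seesaw argument, and part~(3) by smoothness of the fiberwise formula \eqref{eq:8} together with a uniqueness-by-normalization argument. The uniqueness arguments in (2) and (3) are fine, and are in fact spelled out in slightly more detail than the paper gives (the paper simply appeals to the seesaw principle and to the fact that translation-invariant curvature plus normalization along $\nu_0$ pin the metric down).

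The issue is the existence half of part~(2). You correctly identify that the task is to multiply $a_{\ca{P}}$ by a coboundary $\delta f$ with $f$ the exponential of an $\Omega$-dependent bilinear expression in $(z,w)$, so that the modified cocycle becomes holomorphic in $(z,w,\Omega)$, but you explicitly defer writing down $f$ and verifying the three required properties as ``the main technical obstacle.'' That is exactly where the content of the existence proof lies, so as written the argument has a genuine gap. The paper resolves it by taking, in the coordinates of \eqref{eq:10} and \eqref{eq:11}, the single function $\psi(z,w)=\exp\bigl(-\pi\, w(\Im\Omega)^{-1}z\bigr)$ of \eqref{eq:19} — holomorphic in $(z,w)$ but only smooth in $\Omega$ — and checking directly the identity $b_{\ca{P}}((\mu,\lambda),(z,w))=a_{\ca{P}}((\mu,\lambda),(z,w))\,\psi(z,w)\,\psi(z+\mu,w+\lambda)^{-1}$, where the cocycle $b_{\ca{P}}$ of \eqref{eq:5} is manifestly holomorphic in $(z,w,\Omega)$ and satisfies the cocycle condition. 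Your intuition about the shape of $f$ is right, but without this explicit formula there is no way to verify that the new cocycle is holomorphic in $\Omega$, still a cocycle for $\Lambda\times\Lambda^\vee$, and fiberwise equivalent to $a_{\ca{P}}$; the result is that the heart of the statement remains unproved.
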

\begin{proof} Fix an open subset $U\subset X$ as before. 
  The dual family of tori $\ca{T}^{\vee}$ is holomorphic by definition.

  In order to prove that the Poincar\'e bundle defines a holomorphic
  line bundle on the family
  we need to exhibit a new cocycle that is holomorphic in $z$, $w$ and
  $\Omega $ and that, for fixed $\Omega $, is equivalent to
  $a_{\ca{P}}$ holomorphically in $z$ and $w$. Write $\lambda =-\lambda
  _{1}\Omega +\lambda _{2}$ and $\mu =\mu _{1}+\Omega
  \mu _{2}$ as before with $\lambda_1, \lambda_2 \in \on{Row}_{g}(\bb{Z})$ and $\mu_1, \mu_2 \in \on{Col}_{g}(\bb{Z})$. Consider the cocycle
  \begin{equation}\label{eq:5}
    b_{\ca{P}}((\lambda,\mu),(z,w))
    =\exp(2 \pi i((w-\lambda _{1}\Omega +\lambda _{2})\mu _{2}-\lambda_{1} z))
  \end{equation}
for $w \in \on{Row}_{g}(\bb{C})$ and $z \in \on{Col}_{g}(\bb{C})$. Then $b_{\ca{P}}$ is holomorphic in $z$, $w$, and $\Omega $. Consider also the function
  \begin{equation}\label{eq:19}
    \psi (z,w)=\exp(-\pi  w(\Im \Omega)^{-1}z),
  \end{equation}
  which is holomorphic in $z$
  and $w$. Since
  \begin{displaymath}
    b_{\ca{P}}((\mu,\lambda),(z,w))=a_{\ca{P}}((\mu,\lambda
    ),(z,w))\psi (z,w)\psi (z+\mu ,w+\lambda )^{-1}
  \end{displaymath}
  we deduce that the cocycle $b_{\ca{P}}$ determines a line bundle
  that satisfies the properties stated in item (\ref{item:2}) from the proposition over the
  open $U$. The
  uniqueness follows again from the seesaw principle. 
    By the uniqueness, we can glue together the rigidified Poincar\'e bundles obtained
  in different open subsets $U$ to obtain a rigidified Poincar\'e bundle over $X$.
  
  The fact that the invariant metric has invariant curvature fixes it
  up to a function on $X$ that is determined by the normalization
  condition. Thus if it exists, it is unique. Since the expression for
  the metric in  \eqref{eq:8} is smooth in $\Omega $ and the change of
  cocycle function in  \eqref{eq:19} is also smooth in $\Omega $ we
  obtain an invariant metric locally. Again the uniqueness implies that we can
  patch together the different local expressions. 
\end{proof}

\begin{remark} \label{formulanorm}
  Since the cocycle $a_{\ca{P}}$ does not vary holomorphically in
  families, the frame for the Poincar\'e bundle used in equation
  \eqref{eq:8} is not holomorphic in families. The cocycle
  $b_{\ca{P}}$ and the rigidification do determine a holomorphic frame
  of the Poincar\'e bundle over $X\times V\times
  V^{\ast}$. In this holomorphic frame  the metric is given by
  \begin{align}
    \|(z,w,s)\|^{2}&=|s|^{2}\exp(-\pi (w(z)+\overline{w(z)}))|\psi
                     (z,w)|^{2}\notag \\
    &=|s|^{2}\exp(4\pi \on{Im}(w)(\Im \Omega)^{-1} \on{Im}(z))\label{eq:20},
  \end{align}
  where $\psi $ is the function given in \eqref{eq:19}. 
\end{remark}

\medskip
\noindent
\emph{Abelian varieties.}
We now specialize to the case of polarized abelian
varieties. A polarization on the torus $T=V/\Lambda $ is the datum of an antisymmetric
non-degenerate bilinear form $E\colon \Lambda \times \Lambda \to \bb{Z}$
such that for all $v, w \in V$,
\begin{displaymath}
  E(iv,iw)=E(v,w),\qquad -E(iv,v)>0,\text{ for }v\not=0.
\end{displaymath}
Here we have extended $E$ by $\bb{R}$-bilinearly to $V=\Lambda \otimes \bb{R}$. Note that the standard convention in the literature on abelian
varieties is to ask $E(iv,v)$ to be positive. But this convention is
not compatible with the usual convention in the literature on Hodge
Theory. We have changed the sign here to have compatible conventions
for abelian varieties and for Hodge structures. 
   
Since $E$ is antisymmetric and non-degenerate we can choose an integral
basis $(a,b)$ such
that the matrix of $E$ on $(a,b)$ is given by
\begin{equation} \label{type}
  \begin{pmatrix}
    0 & \Delta \\
    -\Delta  & 0
  \end{pmatrix} \, ,
\end{equation}
where $\Delta $ is an integral diagonal matrix. We will call such
basis a $\qq$-symplectic integral basis. From a $\qq$-symplectic
integral basis $(a,b)$ we can construct a symplectic rational basis
$(a\Delta^{-1} ,b)$.  

With the choice of a $\qq$-symplectic integral basis, the
condition $E(iv,iw)=E(v,w)$ is equivalent to
 the product matrix $\Delta \Omega $ being symmetric. Thus
 $\Omega^{t}\Delta =\Delta \Omega $. The condition
 $-E(iv,v)>0$ is equivalent 
to $\Delta \Im \Omega$ being 
positive definite. This last condition is equivalent to that any of
the symmetric matrices $(\Im \Omega)^{t}\Delta $, $((\Im
\Omega)^{-1})^{t}\Delta 
$ or $\Delta (\Im \Omega)^{-1}$ is positive definite.  

Recall from (\ref{eq:10}) that $\Omega \in
M_g(\bb{C})$ is determined by the relation $b=a\Omega$. The
polarization $E$ defines a positive definite hermitian form $H$ on 
$V$ given by
\begin{displaymath}
  H(v,w)=-E(iv,w)-iE(v,w),
\end{displaymath}
so that we recover the polarization $E$ as the restriction of
$-\on{Im}(H)$ to $\Lambda \times \Lambda $. In the basis
$(a_{1},\dots,a_{g})$ of $V$, 
the hermitian form $H$ is given by $\Delta (\Im \Omega)^{-1}=((\Im
\Omega)^{-1})^{t}\Delta $. That is, under 
the identification $V=\on{Col}_{g}(\bb{C})$, we have
\begin{equation}\label{eq:21}
  H(v,w)=v^{t}\Delta (\Im \Omega)^{-1}\ov{w}. 
\end{equation}

The polarization defines an isogeny $\lambda_{E}\colon T\to T^{\lor}$
that is given by the map $V\to V^{\ast}$, $v\mapsto H(v,-)$. Under
the identification
$V^{\ast}=\on{Row}_{g}(\bb{C})$ given by the basis $(b^{\ast})$, by
equations \eqref{eq:11} and \eqref{eq:21}, we deduce that 
$\lambda_E$ is given by
\begin{equation}\label{eq:22}
  \lambda _{E}(v)=-v^{t}\Delta .
\end{equation}
The fact that $\Delta \Omega $ is symmetric and $\Delta $ is integral 
implies that this map sends
$\Lambda $ to $\Lambda ^{\lor}$ defining an isogeny. The dual
polarization $E^{\lor}$ on
$V^{\ast}$ is given by the hermitian form $H^{\vee}(e,f)=e(\Im
\Omega)^{-1}\Delta ^{-1}\ov{f}^{t}$ 
so that the  map $V\to V^{\ast}$ is an isometry. 

Consider now the composition of the diagonal map with the polarization
map on the second factor $(\mathrm{id},\lambda_E) \colon T\to T\times
T^{\vee}$ and let $\ca{P}$ be the Poincar\'e bundle
on $T\times T^{\vee}$. Then $(\mathrm{id},\lambda_E)^{\ast}\ca{P}$
is an ample line bundle on $T$ whose first Chern class agrees with the
given polarization of $T$. 
\begin{theorem} \label{explicitmetricPoinc}
The metric induced on
the bundle $(\mathrm{id},\lambda_E)^{\ast}\ca{P}$ is given by the function $\|\cdot\|\colon V\times \bb{C}^{\times}\to \rr_{>0}$,
\begin{equation}\label{eq:23}
  \|(z,s)\|^{2}=|s|^{2}\exp(-4\pi \Im(z)^{t}\Delta (\Im \Omega)^{-1}\Im(z)).
\end{equation}
\end{theorem}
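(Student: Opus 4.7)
The plan is to simply pull back the explicit formula for the metric on the Poincaré bundle that was derived in Remark \ref{formulanorm}, using the explicit description of the polarization isogeny $\lambda_E$ given in equation \eqref{eq:22}.

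Concretely, the first step is to recall from \eqref{eq:20} that, in the holomorphic frame coming from the cocycle $b_{\ca{P}}$ together with the rigidification, the metric of the Poincaré bundle on $\ca{T}\underset{X}{\times}\ca{T}^{\vee}$ is
\begin{equation*}
  \|(z,w,s)\|^{2}=|s|^{2}\exp\bigl(4\pi\,\on{Im}(w)\,(\Im\Omega)^{-1}\,\on{Im}(z)\bigr),
\end{equation*}
where $z\in\on{Col}_g(\bb{C})$ and $w\in\on{Row}_g(\bb{C})$. By construction of the pullback $(\mathrm{id},\lambda_E)^{\ast}\ca{P}$, a generating section is obtained by composing the map $z\mapsto (z,\lambda_E(z))$ with the holomorphic frame used above, so the induced metric is obtained by substituting $w=\lambda_E(z)$ in the displayed formula.

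The second step is then to invoke \eqref{eq:22}, which identifies $\lambda_E(z)=-z^{t}\Delta$. Since the matrix $\Delta$ is real (in fact integral), taking imaginary parts gives $\on{Im}(\lambda_E(z))=-\on{Im}(z)^{t}\Delta$. Substituting this into the exponential yields
\begin{equation*}
  \|(z,s)\|^{2}=|s|^{2}\exp\bigl(-4\pi\,\on{Im}(z)^{t}\,\Delta\,(\Im\Omega)^{-1}\,\on{Im}(z)\bigr),
\end{equation*}
which is exactly the claimed formula.

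There is no real obstacle: the only point that needs a brief sanity check is that the resulting quadratic form $\on{Im}(z)^{t}\Delta(\Im\Omega)^{-1}\on{Im}(z)$ is indeed real, which follows because $\Delta(\Im\Omega)^{-1}$ was observed, immediately after \eqref{type}, to be a symmetric positive definite real matrix. This also confirms that the metric is well-defined and that $(\mathrm{id},\lambda_E)^{\ast}\ca{P}$ is positive, consistent with the fact that its first Chern form should represent the given polarization.
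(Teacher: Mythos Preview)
Your proof is correct and follows exactly the same approach as the paper, which simply states that the result follows from equations \eqref{eq:20} and \eqref{eq:22}. Your writeup merely makes explicit the substitution $w=\lambda_E(z)=-z^{t}\Delta$ in the formula of Remark \ref{formulanorm}, together with the harmless observation that $\Delta$ is real so that imaginary parts commute with the substitution.
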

\begin{proof} This follows from equations \eqref{eq:20} and \eqref{eq:22}.
\end{proof}

\medskip
\noindent
\emph{Hodge structures of type $(-1,0),(0,-1)$.} Recall that a pure Hodge
structure of type $(-1,0), (0,-1)$ is given by
\begin{enumerate}
\item A finite rank $\bb{Z}$-module, $H_{\bb{Z}}$.
\item A decreasing filtration $F^{\bullet}$ on $H_{\bb{C}}\defeq
  H_{\bb{Z}}\otimes \bb{C}$ such that
  \begin{displaymath}
    F^{-1}H_{\bb{C}}=H_{\bb{C}},\quad F^{1}H_{\bb{C}}=0,\quad
    H_{\bb{C}}=F^{0}H_{\bb{C}}\oplus \overline{F^{0}H_{\bb{C}}}.
  \end{displaymath}
\end{enumerate}

A polarization of a Hodge structure of type $(-1,0),(0,-1)$
is a non-degenerate antisymmetric bilinear form $Q\colon
H_{\bb{Z}}\otimes H_{\bb{Z}}\to \bb{Z}$ which, when extended to
$H_{\bb{C}}$ by linearity, satisfies the ``Riemann bilinear relations''
\begin{enumerate}
\item The subspace $F^{0}H_{\bb{C}}$ is isotropic.
\item If $x\in F^{0}H_{\bb{C}}$, then $iQ(x,\ov x)>0$. 
\end{enumerate}

We will be interested only in torsion-free Hodge structures.
We recall that the category of torsion-free Hodge structures of type
$(-1,0),(0,-1)$ and the category of compact complex tori are
equivalent cf. \cite[Exercise 1.5.10]{bl}. If
$H=(H_{\bb{Z}},F^\bullet)$
is such a Hodge structure, we write $V=H_{\bb{C}}/F^{0}$ and $\pi
\colon H_{\bb{C}}\to H_{\bb{C}}/F^{0}$ for the projection. Then
$\Lambda \defeq \pi (H_{\bb{Z}})$ is a lattice in $V$, that defines
a torus $T=V/\Lambda $. This torus is denoted $J(H)$ and called the
Jacobian of $H$.

Conversely, if $T$ is a complex torus,
then $H_{1}(T,\bb{Z})$ is torsion-free and has a Hodge structure of type
$(-1,0),(0,-1)$. 

If $(H_{\bb{Z}},F^\bullet)$ has a polarization $Q$ then, identifying
$\Lambda $ with $H_{\bb{Z}}$ and writing $E=Q$, we obtain a 
polarization of $T$. We finish by verifying that, indeed $E$ is a
polarization in the sense of complex tori. That $E$ is non-degenerate
follows from the non-degeneracy of $Q$. Let $v,w\in V$, choose $\bar
x,\bar y\in
\overline{F^{0}H_{\bb{C}}}$
such that $\pi (\bar x)=v$ and $\pi (\bar y)=w$. Write $x$, $y$ for the
complex conjugates of $\bar x$  and $\bar y$ respectively. Then
$x+\bar x\in
H_{\bb{Z}}\otimes \bb{R}$ and $\pi (x+\bar x)=v$, while
$ix-i\bar x\in H_{\bb{Z}}\otimes \bb{R}$  and $\pi (ix-i\bar
x)=-iv$. Thus by the first Riemann bilinear relation
\begin{align*}
  E(iv,iw)&=Q(-ix+i\bar x,-iy+i\bar y)=Q(x,\bar y)+Q(\bar x,y)\\
  E(v,w)&=Q(x+\bar x,y+\bar y)=Q(x,\bar y)+Q(\bar x,y),
\end{align*}
Thus $E(iv,iw)=E(v,w)$.
Moreover, by the second bilinear relation
\begin{displaymath}
  H(v,v)=-E(iv,v)=-Q(-ix+i\bar x,x+\bar x)=2iQ(x,\bar x)>0.
\end{displaymath}

\subsection{Nilpotent orbit theorem}
\label{sec:norm-section}
The aim of this section is to formulate a version of the Nilpotent
orbit theorem that allows us to deal with variations of mixed Hodge structures,
in a setting with several variables. Such a Nilpotent orbit theorem is
stated and proved in \cite{pearlhiggs}. In order to formulate this
theorem, we need quite a bit of background material and in particular
define the notion of ``admissibility'' for variations of mixed Hodge
structures. Also we need to take a detailed look at the behaviour of
monodromy on the fibers of the underlying local systems. Most of the
introductory material below is taken from \cite[Section~14.4]{ps} and
\cite{pearlhiggs}.  

\medskip
\noindent
\emph{Variations of polarized mixed Hodge structures.}
Let $X$ be a complex manifold. A graded-polarized variation of mixed
Hodge structures on $X$ is a local system $\hh \to X$ of finitely
generated abelian groups equipped with: 
\begin{enumerate}
\item A finite increasing filtration
\[ \ww_\bullet \colon \quad 0 \subseteq \ldots \subseteq \ww_k
  \subseteq \ww_{k+1} \subseteq \ldots \subseteq \hh_\qq \]
of $\hh_\qq = \hh \otimes \qq$ by local subsystems, called the weight filtration,
\item A finite decreasing filtration
\[ \ff^\bullet \colon \quad \hh_\cc \otimes \oo_X \supseteq \ldots
  \supseteq \ff^{p-1} \supseteq \ff^p \supseteq \ldots \supseteq 0 \] 
of the vector bundle $\HH=\hh_\cc \otimes \oo_X$ by holomorphic
subbundles, called the Hodge filtration, 
\item For each $k \in \zz$ a non-degenerate bilinear form
\[ \boldsymbol{Q}_k \colon \Gr_k^\ww(\hh_\qq) \otimes \Gr_k^\ww(\hh_\qq) \to \qq_X  \]
of parity $(-1)^k$,
\end{enumerate}
such that: 
\begin{enumerate}
\item For each $p \in \zz$ the Gauss-Manin connection $\nabla$ on
  $\HH$ satisfies the ``Griffiths transversality condition'' $\nabla
  \ff^p \subseteq \Omega^1_X \otimes \ff^{p-1}$,
\item For each $k \in \zz$ the triple
  $(\Gr_k^\ww(\hh_\qq),\ff^\bullet\Gr_k^\ww(\HH),\boldsymbol{Q}_k)$ is
  a variation of pure polarized rational Hodge structures of weight
  $k$. Here for each $p \in \zz$ we write $\ff^p \Gr_k^\ww(\HH)$ for
  the image of $\ff^p \HH \cap \ww_k \HH$ in $\Gr_k^\ww(\hh_\cc)$
  under the projection map $\ww_k \HH \to \Gr_k^\ww(\hh_\cc)$. 
\end{enumerate}
A variation of polarized mixed Hodge structures will be called
torsion-free if $\hh$ is a local system of torsion-free abelian
groups. A $\qq$-variation of polarized mixed Hodge structures is
defined analogously with the difference that $\hh$ is a local system
of finite dimensional $\qq$-vector spaces.

\medskip
\noindent 
\emph{Period domains.}
If $(H,W_\bullet,F^\bullet)$ is a mixed Hodge
structure, then $H_\cc$ has a unique bigrading $I^{\bullet,\bullet}$
such that 
\[ F^pH_\cc = \oplus_{r \geq p,s} I^{r,s} \, , \quad
W_k H_\cc = \oplus_{r+s \leq k} I^{r,s} \, , \quad
I^{r,s} = \overline{I}^{s,r} \bmod \oplus_{p<r,q<s} I^{p,q} \, . \]
The integers $h^{r,s}=\dim I^{r,s}$ are called the Hodge numbers of
$(H,W_\bullet,F^\bullet)$. 

Given a quadruple $(H,W_\bullet,Q_k,h)$ with $H$ a rational vector space,
$W_\bullet$ an increasing filtration of $H$,  $Q_k$ a collection of
non-degenerate bilinear forms of parity $(-1)^k$ on $\Gr_k^W(H)$,  and
a partition of $\dim (H)$ into a sum of non-negative
integers $h=\{ h^{r,s} \}$ satisfying the symmetry condition
$h^{r,s}=h^{s,r}$, there exists a natural classifying space 
(also known as a period domain) $\mm=\mm(h)=\mm(H,W_\bullet,Q_k,h)$ of
mixed Hodge structures 
$(W_\bullet, F^\bullet)$ on $H$ which are graded-polarized by $Q_k$.

We recall the construction of $\mm$ from \cite[\S~3]{pearlhiggs}. Write
\begin{displaymath}
  f^{p}=\sum_{r\ge p,\, s}h^{r,s}\quad \text{and}\quad f_{k}^{p}=\sum_{r\ge p}h^{r,k-r}
\end{displaymath}
and let $\check{\mm}$ be the set of all decreasing filtrations
$F^{\bullet}$ of $H_{\cc}$ satisfying
\begin{displaymath}
  \dim(F^{p})=f^{p},\  \dim(F^{p}\Gr_{k}^{W})=f^{p}_{k},\text{ and }
  Q_{k}(F^{p}\Gr_{k}^{W},F^{k-p+1}\Gr_{k}^{W})=0
\end{displaymath}
The group
\begin{displaymath}
  G_{\cc}=\left\{ g\in \GL(H_{\cc})^{W} \, \middle | \, \Gr_{k}(g)\in
    \Aut_{\cc}(Q_{k})\right\} 
\end{displaymath}
is a complex algebraic group that acts transitively on $\check{\mm}$
giving to it a structure of complex manifold. The manifold
$\check{\mm}$ is usually called the ``compact dual'' of $\mm$ by
analogy with the pure case, although in general it is not compact.

The period domain
$\mm$ is the subset of $\check{\mm}$ formed by the filtrations
$F^{\bullet}$ such that $(H,W_{\bullet},F^{\bullet},Q)$ is a polarized
$\qq$-mixed Hodge structure. By \cite[Lemma 3.9]{pearlhiggs} $\mm$ is
an open subset of $\check{\mm}$, hence it has an induced structure
of complex manifold. By the same lemma, the group
\begin{displaymath}
  G_{P}=\left\{ g\in \GL(H_{\cc})^{W} \, \middle | \, \Gr_{k}(g)\in
    \Aut_{\rr}(Q_{k})\right\}
\end{displaymath}
acts transitively on $\mm$.
We also consider the group
\begin{equation}\label{eq:17}
  G_{\rr}=\left\{ g\in \GL(H_{\rr})^{W} \, \middle | \, \Gr_{k}(g)\in
    \Aut_{\rr}(Q_{k})\right\}. 
\end{equation}
Note that we have inclusions
\[ G_{\rr} \subset G_P \subset G_\cc \, . \]
\begin{remark} \label{rem:4}
  The group $G_{\rr}$ acts transitively on the subset $\mm_{\rr}$ of filtrations
  defining a mixed Hodge structure that is split over $\rr$.
  If the filtration $W$ has only two non-trivial weights that are
  adjacent, that is, if there is a $k$ such that
  \begin{displaymath}
    0= W_{k-2}\subset W_{k-1} \subset W_{k}=H,
  \end{displaymath}
  then any mixed Hodge structure on $\mm(H,W,Q,h)$ is split over
  $\rr$. Therefore $\mm_{\bb{R}}=\mm$ and $G_{\rr}$ acts transitively on $\mm$. This will hold for
  the case of interest to us in Section \ref{sec:families}. 
\end{remark}

\noindent
\emph{Relative filtrations.}
Let $H$ be a rational vector space, equipped with a finite increasing
filtration $W_\bullet$. We let $N$ denote a nilpotent endomorphism of
$H$, compatible with $W_\bullet$. We call an increasing filtration
$M_\bullet$ of $H$ a weight filtration for $N$ relative to $W_\bullet$
if the two following conditions are satisfied:
\begin{enumerate}
\item for each $i \in \zz$ we have $NM_i \subseteq M_{i-2}$,
\item for each $k \in \zz$ and each $i \in \mathbb{N}$ we have that
  $N^i$ induces an isomorphism
\[ N^i \colon \Gr_{k+i}^M \Gr_k^W H \isom \Gr_{k-i}^M \Gr_k^W H \]
of vector spaces.
\end{enumerate}
It can be verified that if $H$ has a weight filtration for $N$
relative to $W_\bullet$, then it is unique. We call $N$ strict if
$N(H)\cap W_k = N(W_k)$ for all $k \in \zz$. By \cite[Proposition
2.16]{sz}, if the filtration $W_\bullet$ has length two (in the sense
that $H=W_k$ and $W_{k-2}=0$ for some $k$), and if $H$ has a weight
filtration for $N$ relative to $W_\bullet$, then $N$ is strict.

\medskip
\noindent
\emph{Admissible variations of mixed Hodge structures.}
Now let $(\hh,\ww_\bullet,\ff^\bullet,\boldsymbol{Q}_k)$ be a
variation of graded-polarized mixed Hodge structures over the
punctured unit disc $\Delta^*$.  Let $s_{0}\in
\Delta ^{\ast}$  and $(H,W_\bullet,F^\bullet,Q_k)$ the fiber of
$(\hh,\ww_\bullet,\ff^\bullet,\boldsymbol{Q}_k)$ over $s_{0}$.
Let $\gamma $ be
a generator of the fundamental group $\pi _{1}(\Delta ^{\ast},s_{0})$
and $T$ the monodromy operator defined by $\gamma $ acting on $H$. 
Since $\ww_\bullet$ is a filtration by
local subsystems, the monodromy operator preserves $\ww_\bullet$.
The operator $T$ can be written as $T=T_{s}T_{u}$, where $T_{s}$ is
semisimple and  $T_{u}$ is unipotent. The monodromy is said to be
quasi-unipotent if $T_{s}^{r}=\Id$ for certain integer $r\ge 1$.
We denote by $N=\log
T_{u}$ the logarithm of the unipotent part of the monodromy. Clearly
$N$ is nilpotent. 

By \cite[II Remarque 5.5]{dediff} the vector bundle $\HH=\hh \otimes_{\bb{C}} \ca{O}_{\Delta^*}$ can be
``canonically'' extended to a vector bundle $\tilde{\HH}$ on the unit
disk. Moreover, the subbundles $\WW_{k}=\ww_{k}\otimes \oo_{\Delta ^{\ast}}$
also extend canonically to subbundles $\tilde{\WW}_{k}$ of $\tilde{\HH} $.
These extensions are really canonical when the monodromy is unipotent. In
case it is not, the extensions depend on the choice of a logarithm,
but, as explained in \cite[\S~II~5]{dediff} this choice can be made
once and for all.  


Following \cite{ka}, \cite{ps} and \cite{sz} we call the variation
$(\hh,\ww_\bullet,\ff^\bullet,\boldsymbol{Q}_k)$ \emph{pre-admissible} if 
\begin{enumerate} 
\item the monodromy is quasi-unipotent,
\item the logarithm $N$ of the unipotent part of the monodromy has a weight filtration
  $M_\bullet(H,W_\bullet,N)$ relative to $W_\bullet$ on $H$, 
\item the subbundles $\ff^\bullet$ of $\hh$ extend to subbundles 
  $\tilde{\ff}^\bullet$ of $\tilde {\HH}$ in such a way that the
  coherent sheaves 
  $\Gr^{p}_{\tilde{\ff}}\Gr^{\tilde{\WW}}_{k}$ are locally free.
\end{enumerate}

Assume now that $X$ is an open submanifold of a manifold $\Xbar$,
where $D = \Xbar \setminus X$ is a normal crossings divisor. Let
$(\hh,\ww_\bullet,\ff^\bullet,\boldsymbol{Q}_k)$ be a graded-polarized
variation of mixed Hodge structures over the complex manifold $X$.  We
then call the variation
$(\hh,\ww_\bullet,\ff^\bullet,\boldsymbol{Q}_k)$ \emph{admissible} if
for every holomorphic map $\bar f\colon \Delta \to \Xbar$ with $\bar f(\Delta
^{\ast})\subset X$, the
variation $f^*\hh$ on $\Delta^*$ is pre-admissible. Here we denote by $f$
the restriction of $\bar{f}$ to $\Delta^*$.

In algebraic geometry, admissible variations of mixed Hodge structures
come about as follows. Let $\pi \colon Y \to X$ be a morphism of complex algebraic
varieties. Then there is a non-empty open subset $\iota \colon U\to X$  such
that the constructible sheaf $\hh=\iota^{\ast}\mathrm{R}^i \pi_*
\zz_Y$ is a local system that 
has a canonical structure
of admissible graded-polarized variation of mixed Hodge structures
$(\hh,\ww_\bullet,\ff^\bullet,\QQ_k)$.  Moreover, there is a 
finite \'etale map $g\colon \widetilde U\to U$ such that 
$g^{\ast}\hh$ has unipotent monodromy.

In general, the usual cohomological operations like direct images or
relative cohomology will produce \emph{mixed Hodge modules}
\cite{Saito1,Saito2} which are a 
generalization of the notion of admissible variations of polarizable
mixed Hodge structures, where the local system $\hh$ is replaced by a
perverse sheaf of $\qq$-vector spaces.
There is a criterion for when a mixed Hodge module is
indeed an admissible variation of mixed Hodge structures:
given a
mixed Hodge module $\hh$ (with a given polarization), if the
underlying perverse sheaf is a local system,
then $\hh$ is an admissible $\qq$-variation of polarized mixed Hodge
structures. See for instance \cite{Asakura} for a survey on mixed
Hodge modules.

For admissible variations of mixed Hodge structures we have the
following compatibility between the graded polarization and the
monodromy. Let $(H,W_\bullet,F^\bullet,Q_k)$ be a reference fiber of
the variation near the boundary divisor $D=\Xbar \setminus X$ of the
smooth algebraic variety $X$. We denote the local monodromy operators
around the branches of $D$ by $T_1,\ldots,T_m$, and the corresponding
logarithms of the unipotent part by $N_1,\ldots,N_m$. We denote by
$\mathfrak{g}_{\cc}$ 
the Lie algebra $\Lie G_{\cc}$ of the
group $C_{\cc}$ defined above. Then, in this generality, the $T_i$
belong to $G_{\cc}$, and the $N_i$ belong to $\mathfrak{g}_{\cc}$, for each
$i=1,\ldots,m$. The $\rr_{>0}$-span $\mathcal{C}$ of the local
monodromy logarithms $N_i$ inside $\mathfrak{g}_{\cc}$ is called the
\emph{open monodromy cone} of the reference fiber
$(H,W_\bullet,F^\bullet,Q_k)$. Each element of $\mathcal{C}$ is
nilpotent, and it can be proved that the relative weight filtration $M_\bullet$ of
$(H,W_\bullet)$ is constant on $\mathcal{C}$.\\

\noindent
\emph{The period map.}
To an admissible graded-polarized variation of mixed Hodge structures
$(\hh,\ww_\bullet,\ff^\bullet,\boldsymbol{Q}_k)$ over $X=(\Delta^*)^k
\times \Delta^{n-k}$ we can  associate a period map, as
follows. Let $\mm=\mm(h)$ be the period domain associated to
$(H,W_\bullet,Q_k)$ and set $G_{\cc}$ as above. Let
$\Gamma \subset G_{\cc}$ be the image of the monodromy representation
$\rho \colon \pi_1(X,x_0) \to G_{\cc}$. The period map $\phi \colon X
\to \Gamma \backslash \mm$ is the map that associates to $x \in X$
the Hodge filtration of $\hh_x$. The period map is holomorphic. 

Let $\mathbb{H} \subset \cc$ be the upper half plane. Let $e
\colon \mathbb{H}^k \to (\Delta^*)^k$ be the uniformization map given
by $(z_1,\ldots,z_k) \mapsto (\exp(2\pi i z_1),\ldots,\exp(2\pi i
z_k))$. Then along $e$ the period map $\phi$ lifts to a map
$\tilde{\phi} \colon \mathbb{H}^k \times \Delta^{n-k} \to \mm$. In
other words, we have the following commutative diagram
\[ \xymatrix{ \mathbb{H}^k \times \Delta^{n-k} \ar[r]^-{\tilde{\phi}} 
\ar[d]^{(e,\mathrm{id})} & \mm \ar[d] \\ (\Delta^*)^k \times \Delta^{n-k} \ar[r]^-\phi 
& \Gamma \setminus \mm   } 
\]
where the right hand arrow is the canonical projection.
As $N_i \in \Lie G_{\cc}$ we find $\exp(\sum_{i=1}^k z_iN_i) \in
G_{\cc}$ for all $z_1,\ldots,z_k \in \mathbb{H}$.  
Let $\tilde{\psi} \colon \mathbb{H}^k \times \Delta^{n-k} \to
\check{\mm}$ be the map given by 
\[ \tilde{\psi}(z_1,\ldots,z_k,q_{k+1},\ldots,q_n) = \exp(-\sum_{i=1}^k z_iN_i) . \tilde{\phi}(z_1,\ldots,z_k,q_{k+1},\ldots,q_n) \, . \] 
Then $\tilde{\psi}$ descends to an ``untwisted'' period map $\psi
\colon (\Delta^*)^k \times \Delta^{n-k} \to
\check{\mm}$, fitting in a commutative diagram
\[ \xymatrix{ \mathbb{H}^k \times \Delta^{n-k} \ar[r]^{\tilde{\psi}} 
\ar[d] & \check{\mm} \\ (\Delta^*)^k \times \Delta^{n-k} \ar[ur]^\psi 
&   } 
\]
Note that, importantly, the map $\psi$ takes values in the compact dual $\check{\mm}$, and not in a quotient of it.

\medskip
\noindent
\emph{Nilpotent orbit theorem.}
The following result is the starting point of G. Pearlstein's
Nilpotent orbit theorem 
(cf. \cite[Section~6]{pearlhiggs}) for admissible graded-polarized
variations of mixed Hodge structures and is enough for showing the
estimates we need. 

\begin{theorem} \label{nilpotentorbit} (G. Pearlstein) Let $(\hh,\ww_\bullet,\ff^\bullet,\boldsymbol{Q}_k)$ be an admissible graded-polarized variation of mixed Hodge structures over $X=(\Delta^*)^k \times \Delta^{n-k}$. Then the untwisted period map $\psi$ extends to a holomorphic map $\psi \colon \Delta^n \to \check{\mm}$. 
\end{theorem}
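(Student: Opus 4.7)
The plan is to follow the strategy of Pearlstein, reducing the admissible mixed case to the pure case (already handled by Schmid in one variable and by Cattani--Kaplan--Schmid in several variables) through the admissibility hypothesis.

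First, I would observe that for each $k \in \zz$ the associated graded piece $(\Gr_k^\ww \hh_\qq, \ff^\bullet \Gr_k^\ww \HH, \boldsymbol{Q}_k)$ is a pure polarized variation of weight $k$ with unipotent monodromy (monodromy on $\hh$ is unipotent, hence so is its induced action on each graded piece). Applying the several-variable nilpotent orbit theorem of Schmid in the pure setting, each associated untwisted period map $\psi_k$ extends holomorphically to all of $\Delta^n$ with values in the compact dual $\check{\mm}_k$ of the relevant pure period domain. This provides holomorphic extensions of the graded pieces ${}^k \tilde{\ff}^\bullet$ on $\Gr_k^\ww \tilde{\HH}$ across the boundary.

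Second, I would invoke the admissibility hypothesis to lift these pure extensions to the mixed setting. By admissibility, the Hodge filtration $\ff^\bullet$ extends to a filtration $\tilde{\ff}^\bullet$ of the Deligne canonical extension $\tilde{\HH}$, inducing on each $\Gr_k^\ww \tilde{\HH}$ precisely the extended filtration ${}^k \tilde{\ff}^\bullet$ from the previous step. Reading the fiber of $\tilde{\ff}^\bullet$ at each point of $\Delta^n$ produces a candidate holomorphic extension of $\psi$ into the ambient flag manifold parametrizing filtrations of $H_\cc$ compatible with $\ww_\bullet$ and the Hodge numbers.

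Third, I must verify that this candidate actually lands in the compact dual $\check{\mm} \subset$ flag manifold and is holomorphic in local coordinates on $\check{\mm}$ adapted to the limit mixed Hodge structure. The defining conditions for $\check{\mm}$ concern compatibility of the graded pieces with the polarizations $\boldsymbol{Q}_k$, and these conditions are closed and already satisfied on the dense open subset $(\Delta^*)^k \times \Delta^{n-k}$, so they persist on the closure. For holomorphy one writes $\tilde{\psi}(z_1,\ldots,z_k,q_{k+1},\ldots,q_n)$ in a Schubert-type coordinate chart of $\check{\mm}$ centered at the candidate limit and verifies that it is bounded as $\Im(z_i) \to \infty$. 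Boundedness follows by combining the $\mathrm{SL}_2$-orbit estimates of Cattani--Kaplan--Schmid on each pure graded piece with the control that the existence of the relative weight filtration $M_\bullet(H,W_\bullet,N)$ (part of admissibility) provides on how $\exp(-\sum_i z_i N_i)$ acts across the $W_\bullet$-graduation. Once boundedness is known on a dense open with removable singularities along a normal crossings divisor, Riemann's extension theorem gives holomorphy on $\Delta^n$.

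The main obstacle is the third step: patching the $\mathrm{SL}_2$-orbit estimates on the pure graded pieces into a uniform estimate for the full mixed variation. This is exactly where the existence of a relative weight filtration along each monodromy cone, together with the constancy of this relative filtration on cones of commuting monodromy logarithms (an admissibility output), is used to control the off-diagonal terms of $\tilde{\psi}$ with respect to the $\ww_\bullet$-splitting; the technical core of the argument is precisely this mixed analogue of the several-variable $\mathrm{SL}_2$-orbit theorem worked out by Pearlstein.
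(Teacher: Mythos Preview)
The paper does not give its own proof of this theorem: it is stated as a result of Pearlstein and cited from \cite{pearlhiggs} (see the sentence introducing the theorem, ``The following result is the starting point of G.~Pearlstein's Nilpotent orbit theorem \ldots''). There is therefore nothing in the paper to compare your proposal against; the authors simply invoke the statement as a black box.

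That said, your outline is a reasonable sketch of the strategy in \cite{pearlhiggs}: pass to the pure graded pieces where Schmid's nilpotent orbit theorem applies, use the admissibility hypothesis (extension of $\ff^\bullet$ to the canonical extension and existence of the relative weight filtration) to lift back to the mixed setting, and then check holomorphy via boundedness and the Riemann extension theorem. One caution: your third step leans on mixed $\mathrm{SL}_2$-orbit estimates, which in the literature come somewhat later (Kato--Nakayama--Usui, Pearlstein's later work) than the nilpotent orbit theorem itself. Pearlstein's original argument in \cite{pearlhiggs} is more direct, working with the canonical extension and Deligne's grading to produce the holomorphic map into $\check{\mm}$ without the full strength of an $\mathrm{SL}_2$-orbit theorem. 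So while your sketch is not wrong in spirit, it overstates the machinery needed for this particular statement.
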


\subsection{Families of pointed polarized abelian varieties}	
\label{sec:families}
\

\medskip
\noindent
\emph{Period vectors.}
Assume that $(H,F^\bullet,Q)$ is a polarized pure Hodge structure of
weight $-1$, torsion-free, of type $(-1,0),(0,-1)$, and of rank $2g$.
Recall that given a
$\qq$-symplectic integral basis $(a_1,\ldots,a_g,b_1,\ldots,b_g)$ of
$(H,Q)$, there exists a unique
basis $(w_1,\ldots,w_g)$ of $F^0H_\cc$
determined by demanding that $w_i = -\sum_{j=1}^g \Omega_{ij}a_j +
b_i$ for some (period) matrix $\Omega \in M_g(\bb{C})$ (cf. equation
(\ref{eq:10})). We call this new basis the associated
\emph{normalized} basis. As we have seen, the Riemann bilinear
relations imply that $\Omega^t\Delta =\Delta \Omega$ and $\Delta \Im
\Omega >0$. 

Assume an extension
\begin{equation} \label{anyextension} 0 \to H \to H' \to \zz(0) \to 0 
\end{equation}
in the category of mixed Hodge structures is given. Then $H'$ has
weight filtration
\[ W_\bullet \colon \quad 0 \subset W_{-1} = H_\qq \subset W_0 = H_\qq' \, . \]
Taking $F^0(-)_\cc$ in (\ref{anyextension}) yields the extension
\[ 0 \to F^0H_\cc \to F^0H'_\cc \to \cc \to 0 \] of $\cc$-vector
spaces. As can be readily checked, for each $a_0 \in H'$ that lifts
the canonical generator of $\zz(0)$ in (\ref{anyextension}) there
exists a unique $w_0 \in F^0H'_\cc$ such that
$w_0 \in a_0 + \cc$-$\spann(a_1,\ldots,a_g)$. Given such a lift $a_0$,
we let $\delta_{H'} = (\delta_1,\ldots,\delta_g)^{t} \in
\on{Col}_{g}(\bb{C})$ be the 
coordinate vector determined by the identity
$w_0=a_0+\sum_{j=1}^g \delta_j a_j$. We call $\delta_{H'}$ the \emph{period
vector} of the mixed Hodge structure $(H',F^\bullet,W_\bullet)$ on the
basis $(a_0,a_1,\ldots,a_g,b_1,\ldots,b_g)$ of the $\zz$-module
$H'$. It can be verified that replacing $a_0$ by some element from
$a_0+H$ changes $\delta$ by an element of $\zz^g+\Omega \zz^g$. The
resulting map
$\mathrm{Ext}^1_{\mathrm{MHS}}(\zz(0),H) \to \cc^g/(\zz^g+\Omega
\zz^g)$ is finite, and gives
$\mathrm{Ext}^1_{\mathrm{MHS}}(\zz(0),H)$ a canonical structure of
complex torus.

Let $A=J(H)$ be the Jacobian of $H$. Thus $A$ is a polarized complex
abelian variety of dimension $g$ with 
$H=\rmH_1(A)$. Let $\nu \in A$,
and write $H(\nu)$ for the 
relative homology group $\rmH_1(A,\{0,\nu\})$. There is an
extension of mixed Hodge structures
\[ 0 \to H \to H(\nu) \to \zz(0) \to 0 \]
canonically associated to $(A,\nu)$. Here $\zz(0)$ is to be identified
with the reduced homology group $\tilde{\rmH}_0(\{0,\nu \})$. 
The map $A \to \mathrm{Ext}^1_{\mathrm{MHS}}(\zz(0),H)$ given by
sending $\nu$ to the extension $H(\nu)$ is a bijection, compatible
with the structure of complex torus on left and right hand side.

\medskip
\noindent
\emph{The period map of a family of pointed polarized
  abelian varieties.} 
Let $X$ be a smooth complex variety with an open immersion $X\subset
\Xbar$  into a smooth complex algebraic variety, with $D = \Xbar
\setminus X$ a normal crossings divisor.  
Let $(\hh,\ff^\bullet,\boldsymbol{Q}_k)$ be a variation of polarized torsion-free pure Hodge
structures of weight $-1$ and type $(-1,0),(0,-1)$ over $X$. We note that such
a pure polarized variation is necessarily admissible. Write $Y=J(\hh)$ and let $\pi \colon Y \to X$  be the 
associated analytic family of polarized abelian
varieties over $X$, with polarization $\lambda \colon Y \to Y^\lor$.

We will now work locally complex analytically. Thus we will suppose
that $\Xbar$ is the polydisk $\Delta^n$, and $D$ is the divisor given by the
 equation $q_1\cdots q_k=0$, so that $X = (\Delta^*)^k \times
 \Delta^{n-k}$. 
We further assume that all local monodromy operators $T_1,\ldots,T_k$ about
the various branches determined by $q_1,\ldots,q_k$ are unipotent (for
instance, this is the case if the family extends as a semiabelian scheme
$\Ybar \to \Xbar$). Let $g$ be the
relative dimension of $Y \to X$.  Also, we will henceforth usually suppress the polarization
from our notation.  

Let $(H,F^\bullet)$ be a reference fiber of $\hh$ near the origin. Let
$N$ be any element of the open monodromy cone of $H$. Then we have
$N^2=0$ and the
filtration associated to $N$ simply reads
\[ 0 \subset M_{-2} \subset M_{-1} \subset M_0 = H_\qq \]
with $M_{-2}=\Im N $ and $M_{-1}=\Ker N $. Since, in this case the
group $G_{\rr}$ acts transitively on $\mm$, the operator $N$ belongs to the
Lie algebra of $G_{\rr}$, thus there exist a $\qq$-symplectic integral basis
$(a_1,\ldots,a_g,b_1,\ldots,b_g)$ of
$(H,Q)$ and a non-negative integer $r \leq g$ such that:
\begin{enumerate}
\item $M_{-2}=\spann{(a_1,\ldots,a_r)}$, 
\item $M_{-1}=\spann{(a_1,\ldots,a_g,b_{r+1},\ldots,b_g)}$. 
\end{enumerate}
In particular,
$(\bar{a}_{r+1},\ldots,\bar{a}_g,\bar{b}_{r+1},\ldots,\bar{b}_g)$ is a
$\qq$-symplectic integral basis of the pure polarized Hodge structure
$\Gr_{-1}^M H$ of type $(-1,0), (0,-1)$. Clearly, with respect to this
basis, each local monodromy operator $N_j$ has the form 
\[ N_j = \begin{pmatrix}[c|c] 0 & A'_j \\
\hline 
0 & 0 \\
\end{pmatrix} \, . \]
Let $\Delta $ be the matrix associated to the polarization as in
equation \eqref{type}.
Each $A'_j$ is integral and the $g$-by-$g$ matrices $A_{j}\defeq \Delta A'_{j}$
are symmetric and
positive semidefinite. Moreover, the left upper $r$-by-$r$
block of $A_j$ is positive definite.  See \cite{cat} for more details in the above
construction. 

To avoid the appearance of the polarization
matrix $\Delta $ and thus to simplify the notation we will sometimes replace the $\qq$-symplectic
integral basis
$(a,b)$ by the symplectic $\qq$-basis $(a\Delta ^{-1},b)$. In this new
basis each local monodromy operator $N_j$ has the form 
\[ N_j = \begin{pmatrix}[c|c] 0 & A_j \\
\hline 
0 & 0 \\
\end{pmatrix} \, . \]

On this new basis we can realize the period domain
associated to $H$ as the usual Siegel's upper
half space $\mathbb{H}_g$ of rank $g$. We have $G_{\rr}=\Sp(2g,\rr)$, and the
action on $\mathbb{H}_g$ is given by the usual prescription 
\[ \begin{pmatrix}[c|c] A & B \\
\hline 
C & D \\
\end{pmatrix} \cdot M = (AM+B)(CM+D)^{-1} \, , \,     
\begin{pmatrix}[c|c] A & B \\
\hline 
C & D \\
\end{pmatrix} \in \Sp(2g,\rr) \, , \, M \in \mathbb{H}_g \, . \]

In this representation the period map $\Omega \colon X \to \Gamma
\setminus \mathbb{H}_g$ is 
made explicit by associating to each $x \in X$ the matrix $\Omega
(x)=\Delta\Omega _{Y_{x}} $, where $\Omega _{Y_{x}}$ is the period
matrix of the fiber $Y_x$ 
on the chosen $\qq$-symplectic integral basis of $H$. Here $\Gamma$ is the
image of the monodromy representation into
$\Sp(2g,\rr)$. In the new basis, the monodromy
representation sends the
local monodromy operator $T_{j}$ to the matrix
\[ \begin{pmatrix}[c|c] 1 & A_j \\
\hline 
0 & 1 \\
\end{pmatrix} \in \Sp(2g,\rr)\, . \]

We will now extend this picture to include an admissible normal function $\nu$.  
\begin{definition}\label{def:3} Let $U\subset X$ be a non-empty open subset. An
  \emph{admissible normal function} of $Y$ on $U$ is a holomorphic section $\nu \colon U \to Y|_U$ such that the associated graded-polarized mixed Hodge structures $\hh_{x}(\nu )$, $x\in U$ form an admissible variation over $U$.
\end{definition}
Let $\nu$ be an admissible normal function of $Y$ on $X$. Varying $x \in X$ we thus find an extension
\[ 0 \to \hh \to \hh(\nu) \to \zz(0) \to 0 \]
of admissible variations of graded-polarized mixed Hodge structure. The weight filtration of this
variation looks like
\[ W_\bullet \colon \quad 0 \subset \boldsymbol{W}_{-1}=\boldsymbol{H}_\qq \subset \boldsymbol{W}_0 = \boldsymbol{H}(\nu)_\qq \, , \]
so that $\Gr_{-1}^{\boldsymbol{W}} \boldsymbol{H}(\nu)_\qq=\boldsymbol{H}_\qq$ and $\Gr_{0}^{\boldsymbol{W}} \boldsymbol{H}(\nu)=\qq(0)$. We denote the
Hodge filtration of $\hh(\nu)$ by $\ff^\bullet$. We start by taking a
reference fiber $H(\nu)$ of $\boldsymbol{H}(\nu)$ and augmenting our 
chosen $\qq$-symplectic integral basis of $H$ by an $a_0 \in H(\nu)$ lifting the
canonical generator of $\zz(0)$. 

Since by assumption,  $\hh(\nu)$ is an admissible variation of polarized
mixed Hodge structures, the relative weight filtration
$M'_\bullet$ on our reference fiber $H(\nu)$ exists. Let $N'$ be an
element of the open monodromy cone of $H(\nu)$ such that $N=N'|_{H}$. We
will now proceed to determine the matrix shape of $N'$ on the
basis $(a_0,a\Delta ^{-1},b)$ of $H(\nu)$. As $N'^2=0$,
the filtration associated to $N'$ on $H(\nu)$ is 
\[ L_\bullet \colon \quad 0 \subset L_{-1} \subset L_0 \subset
  L_1=H(\nu)_\qq \, , \]
with $L_{-1}=\Im(N')$, $L_0=\Ker(N')$. As the monodromy action on
$\Gr_0^W =\qq(0)$ is trivial, we have that $\Im(N') \subset H_\qq$, so
that $N'^{-1}H_\qq=H(\nu)_\qq$ (here by $N'^{-1}H_\qq$ we denote the
inverse image of $H_\qq$ under $N'$). As $W_\bullet$ has length two, and as by
admissibility the weight filtration of $N'$ relative to $W_\bullet$
exists, as we noted above it follows that $N'$ is strict. Explicitly,
we have that $H(\nu)_\qq=N'^{-1}H_\qq=H_\qq+\Ker(N')$. The equality
$H(\nu)_\qq=H_\qq+\Ker(N')$ implies that $\Ker(N') \supsetneqq
\Ker(N)$ and hence that $\Im(N')=\Im(N)$.

The period domain associated to $(H(\nu),W_\bullet)$ can be realized
as $\cc^g \times \mathbb{H}_g$. The group $G_{\rr}$ in this case is
\[ G_{\rr} = \left\{ \begin{pmatrix}[c|c|c]
1 & 0 & 0 \\ \hline
m & A & B \\ \hline
n & C & D
\end{pmatrix}  \, :  \, m, n \in \rr^g \, , \, \begin{pmatrix}[c|c]
A & B \\ \hline C & D \end{pmatrix} \in \Sp(2g,\rr)
\right\} \, .  \]
The action of $G_\rr$ on $\cc^g \times \mathbb{H}_g$ is given by
\[ \begin{pmatrix}[c|c|c]
1 & 0 & 0 \\ \hline
m & A & B \\ \hline
n & C & D
\end{pmatrix} (v,M) = (v+m+Mn,(AM+B)(CM+D)^{-1}) \, , \, v \in \cc^g \, , \, M \in \mathbb{H}_g \, .
\]

Varying $x \in X=(\Delta^*)^k \times
 \Delta^{n-k}$ and then taking $F^0$ we obtain a period map
associated to the variation $\hh(\nu)$ 
\[ (\delta,\Omega) \colon X \to \Gamma \setminus (\cc^g \times
  \mathbb{H}_g) \] 
that is given by
\[
(\delta (x),\Omega (x))=(\Delta \delta _{H(\nu (x))}, \Delta \Omega _{Y_{x}}).
\]
We denote by
\[ (\tilde \delta,\tilde \Omega) \colon \bb{H}^
k\times \Delta ^{n-k} \to \cc^g \times \mathbb{H}_g \]
the lift of the period map along the map $(e,\mathrm{id}) \colon \mathbb{H}^k \times \Delta^{n-k} \to X$, where we recall that we denote by $e\colon \bb{H}^{k}\to (\Delta
^{\ast})^{k} $ the map
\begin{displaymath}
  e(z_{1},\dots,z_{k})=(\exp(2\pi i z_{1}),\dots,\exp(2\pi i z_{k})).
\end{displaymath}
\begin{theorem} \label{asympt} 
There exist a holomorphic map $\psi \colon \Delta^n \to S_g(\cc)$, a
holomorphic map $\alpha \colon \Delta^n \to \cc^g$,  
and vectors $c_1,\ldots, c_k \in \qq^g$ with $\Delta ^{-1}A_jc_j \in
\zz^g$ for $j=1,\ldots,k$ such that for $(z,t) \in \mathbb{H}^k \times
\Delta^{n-k}$ with $e(z)$ sufficiently close to zero the equalities
\[ \tilde{\Omega}(z,t) = \sum_{j=1}^k z_j A_j  + \psi(e(z),t) \, ,
  \quad \tilde{\delta}(z,t) = \sum_{j=1}^k z_j A_jc_j  + \alpha(e(z),t)
\] 
hold in $S_g(\cc)$ resp.\ $\cc^g$. 
\end{theorem}
\begin{proof} Let $N_j$ denote the local monodromy operator of $H$
  around the branch of $D$ determined by $q_j=0$. We have
\[ \exp(z_jN_j)=T_j^{z_j} = \begin{pmatrix}[c|c] 1 & z_jA_j \\
\hline 
0 & 1 \\
\end{pmatrix} \]
and hence
  $\exp(z_jN_j).M=z_jA_j+M$ for each $M \in \mathbb{H}_g$, $z_j
  \in U$, and 
  $j=1,\ldots,k$ (here $U$ is an open subset of $\bb{H}$ consisting of
  points with sufficiently large imaginary part). Denote by
  $\bb{P}_{g}$ the compact dual of 
  $\bb{H}_{g}$. The untwisted period map $\psi \colon
  \Delta^n \to \mathbb{P}_g$ obtained by Theorem \ref{nilpotentorbit}
  extending $\exp(-\sum_{j=1}^k
  z_jN_j).\tilde{\Omega}(z,t)$ factors through $S_g(\cc)\subset
  \mathbb{P}_g$. We obtain the equalities
\[ \tilde{\Omega}(z,t) = \exp(\sum_{j=1}^k z_jN_j).\psi(e(z),t) =
\sum_{j=1}^k z_j A_j + \psi(e(z),t) \]
in $S_g(\cc)$.

Let $N'_j$ denote the local monodromy operator of $H(\nu)$ around the
branch of $D$ determined by $q_j=0$. The equality
$\Im(N'_j)=\Im(N'_j|_{H_\qq})$ on $H(\nu)_\qq$ that follows from our
above considerations
shows that $N'_j$ has a matrix
\[  \begin{pmatrix}[c|c|c]
0 & 0 & 0 \\ \hline
\Delta ^{-1}A_jc_j & 0 & \Delta ^{-1}A_j \\ \hline
0 & 0 & 0
\end{pmatrix} \]
on the integral basis $(a_0,a_1,\ldots,a_g,b_1,\ldots,b_g)$, for some
$c_j \in \qq^g$. Since the monodromy is integral in such basis, we
deduce that $\Delta ^{-1}A_jc_j$ has to be integral. In the $\qq$-basis
$(a_{0},a\Delta ^{-1},b)$, the matrix of   $N'_j$ is
\[  \begin{pmatrix}[c|c|c]
0 & 0 & 0 \\ \hline
A_jc_j & 0 & A_j \\ \hline
0 & 0 & 0
\end{pmatrix} \]

Then for $(v,M) \in \cc^g \times \mathbb{H}_g$ and $z_j \in U$
we have $\exp(z_jN'_j).(v,M)=(v+z_jA_jc_j,M+z_jA_j)$. Let
$(\alpha,\psi) \colon \Delta^n \to \cc^g \times \mathbb{P}_g$ denote
the untwisted period map. We find the equalities
\[ \tilde{\delta}(z,t) = \exp(\sum_{j=1}^k z_jN_j').\alpha(e(z),t) =
  \sum_{j=1}^k z_j A_jc_j + \alpha(e(z),t)   \] 
in $\cc^g$.
\end{proof}

\medskip
\noindent
\emph{The norm of a section.}
Denote by $\pp$ the
Poincar\'e bundle on $Y \times_X Y^\lor$ with its canonical $C^\infty$
hermitian metric as described in Section
\ref{sec:poincare-bundle}. Given two admissible normal functions $\nu
,\mu \colon X \to Y$ we will denote
\begin{displaymath}
  \pp_{\nu ,\mu }=(\nu ,\lambda \mu )^{\ast}\pp,\qquad \pp_{\nu
  }=\pp_{\nu ,\nu }, 
\end{displaymath}
where $\lambda \colon Y\to Y^{\lor}$ is the isogeny provided by the
polarization. We are interested in studying the singularities of the
metric of $\pp_{\nu ,\mu }$ when we approach the boundary of $X$. 

Consider the five maps
\begin{displaymath}
    m, \, p_{1,3}, \, p_{1,4}, \, p_{2,3}, \, p_{2,4} \colon Y \times_X Y \times_X
    Y^\lor \times_X Y^\lor \longrightarrow Y \times_X
    Y^\lor \, ,
\end{displaymath}
where $m(x,y,z,t)=(x+y,z+t)$ and $p_{i,j}$ is the projection over
the factors $i,j$. Then we have a canonical isomorphism
  \begin{equation} \label{biextproperty}
    m^{\ast}\pp   \isom p_{1,3}^{\ast}\pp \otimes p_{1,4}^{\ast}\pp
    \otimes p_{2,3}^{\ast}\pp \otimes p_{2,4}^{\ast}\pp 
  \end{equation}
of holomorphic line bundles over $Y \times_X Y \times_X
    Y^\lor \times_X Y^\lor$, in other words, the Poincar\'e bundle is
    a biextension on $Y \times_X Y^\lor$ in the sense of
    \cite[Expos\'e VII]{SGA7.1}. The explicit description of
    the cocycle $b_{\pp}$ in equation \eqref{eq:5} and of the metric
    of the Poincar\'e bundle in Remark \ref{formulanorm} shows that
    the canonical isomorphism (\ref{biextproperty}) is in fact an
    isometry for the canonical induced metrics on left and right hand
    side. We obtain in particular 
\begin{lemma}\label{lemm:1}
Let $\nu_1, \nu_2, \mu_1, \mu_2$ be holomorphic sections of the family
$Y \to X$. Then we have a canonical isometry
\begin{displaymath}
  (\nu_1+\nu_2,\lambda(\mu_1+\mu_2))^*\pp \isom
(\nu_1,\lambda \mu_1)^*\pp \otimes 
(\nu_1,\lambda \mu_2)^*\pp \otimes 
(\nu_2,\lambda \mu_1)^*\pp \otimes 
(\nu_2,\lambda \mu_2)^*\pp 
\end{displaymath}
of hermitian line bundles on $X$. 
\end{lemma}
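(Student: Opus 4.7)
The plan is to deduce the lemma by pulling back the biextension isometry \eqref{biextproperty} along a well-chosen section of the fourfold fiber product $Y\times_X Y \times_X Y^\lor \times_X Y^\lor \to X$. The biextension isomorphism is already stated to be an isometry in the text preceding the lemma (as a consequence of the explicit cocycle \eqref{eq:5} and the metric formula \eqref{eq:20}), so no further analytic work should be required; the lemma is essentially a formal consequence combined with the fact that $\lambda$ is a group homomorphism.

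Concretely, I would introduce the morphism
\[
\sigma\colon X \longrightarrow Y\times_X Y \times_X Y^\lor \times_X Y^\lor, \qquad x \longmapsto \bigl(\nu_1(x),\nu_2(x),\lambda\mu_1(x),\lambda\mu_2(x)\bigr),
\]
and check the compositions with $m$ and with the projections $p_{i,j}$. Since $\lambda\colon Y \to Y^\lor$ is a homomorphism of abelian schemes over $X$, one has $m\circ\sigma=(\nu_1+\nu_2,\lambda\mu_1+\lambda\mu_2)=(\nu_1+\nu_2,\lambda(\mu_1+\mu_2))$, while $p_{1,3}\circ\sigma=(\nu_1,\lambda\mu_1)$, $p_{1,4}\circ\sigma=(\nu_1,\lambda\mu_2)$, $p_{2,3}\circ\sigma=(\nu_2,\lambda\mu_1)$, and $p_{2,4}\circ\sigma=(\nu_2,\lambda\mu_2)$. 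Applying $\sigma^*$ to \eqref{biextproperty} and using that pullback commutes with tensor products of line bundles then yields precisely the canonical isomorphism of hermitian line bundles claimed.

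For the isometry statement, I would appeal to the fact that pullback along any holomorphic map preserves isometries; the real content is that \eqref{biextproperty} is an isometry, which is already recorded in the paragraph immediately preceding the lemma. If one wanted to verify this by hand in local trivializations, one checks that the exponent $4\pi\,\Im w\, (\Im\Omega)^{-1}\,\Im z$ appearing in \eqref{eq:20} is bilinear in $(z,w)$, so that setting $z=z_1+z_2$ and $w=w_1+w_2$ and expanding gives the required additive splitting, which after exponentiation turns into the tensor-product decomposition on the right-hand side. There is no genuine obstacle here; the only issue is careful bookkeeping of indices and tensor factors, and the verification that $\lambda\mu_1+\lambda\mu_2=\lambda(\mu_1+\mu_2)$, which is just $\qq$-linearity of $\lambda$.
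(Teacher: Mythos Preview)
Your proposal is correct and is exactly the argument the paper has in mind: the lemma is stated immediately after the observation that \eqref{biextproperty} is an isometry, with the words ``We obtain in particular'', and your pullback along $\sigma=(\nu_1,\nu_2,\lambda\mu_1,\lambda\mu_2)$ is precisely how one ``obtains'' it. The only tiny slip is calling the identity $\lambda(\mu_1+\mu_2)=\lambda\mu_1+\lambda\mu_2$ ``$\qq$-linearity''; it is just the additivity of the isogeny $\lambda$, which you already stated correctly earlier.
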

Let $f \colon Y \times_X Y^\lor \to Y \times_X Y^\lor$ be the map given by $(x,\ell) \mapsto (\lambda^\lor(\ell),\lambda(x))$. Then we have a canonical isometry $f^*\ca{P} \isom \ca{P}^{\otimes d}$, where $d=\det \Delta$ is the degree of the polarization $\lambda$. This leads to canonical isometries
\[  \ca{P}_{\nu,\mu}^{\otimes d} \isom \ca{P}_{\mu,\nu}^{\otimes d}  \quad \textrm{and} \quad
\ca{P}_{\nu,\mu}^{\otimes 2d} \isom \ca{P}_{\nu+\mu}^{\otimes d} \otimes \ca{P}_\nu^{\otimes (-d)} \otimes \ca{P}_\mu^{\otimes (-d)} \, . \]
Hence, in order to study the singularities of
the metric on
$\pp_{\nu,\mu}$ it suffices to study the singularities of the metric
on $\pp_\nu$, $\pp_\mu$ and $\pp_{\nu+\mu}$. In particular,  for the
purpose of proving our main results, it suffices to focus on the
diagonal cases $\pp_\nu$.

Let $\nu$ be an admissible normal function 
of the family $Y \to X$. Let $s$ be a section on $X$ of $\pp_{\nu }$.
Interpreting the Poincar\'e bundle
as parametrizing biextension variations of mixed Hodge structures (see
\cite{hainbiext}) we can canonically associate to the section $s$  a biextension variation of
mixed Hodge structures over $X$. We say that the section $s$ is
\emph{admissible} if this variation is admissible.

 Let $x_0$ be a
point of $\Xbar$. The purpose of the present section is to give an asymptotic
expansion of the logarithm of the norm of an admissible section of
$\pp_\nu$ near $x_0$. From equation
\eqref{eq:23} it follows that it suffices to give asymptotic
expansions of the period matrix of the family $Y \to X$, and of the
period vector (see below) associated to $\nu$.

We use now Theorem \ref{asympt} to
obtain an expression of the norm of the section $s$. 
Let $\aabs{-}$ denote the canonical metric on
$\ca{P}_\nu=\nu^*\ca{P}$. Continuing the notation from Theorem \ref{asympt}, 
let $a = 2\pi \Im \alpha$ and $B=2\pi \Im \psi$. For
$j=1,\ldots,k$ let $x_j = -\log|t_j|$.
\begin{corollary} \label{explicitnorm} 
For every admissible section $s$ of $\ca{P}_\nu$ on $ (\Delta^*)^
{k} \times \Delta^{n-k}$ there exists a meromorphic function $h$ on
$\Delta^n$ which is holomorphic on $ (\Delta^*)^  {k} \times
\Delta^{n-k}$, such that the
identity 
\begin{equation}\label{eq:def_of_f_eta}
-\log\aabs{s} = -\log\abs{h} + \left(\sum_{j=1}^kx_jA_jc_j + a
\right)^t \left(\sum_{j=1}^kx_jA_j + B \right)^{-1}
\left(\sum_{j=1}^kx_jA_jc_j + a \right)
\end{equation}
holds on $ (\Delta^*)^{k} \times \Delta^{n-k}$. 
\end{corollary}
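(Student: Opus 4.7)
The plan is to combine the explicit formula for the biextension metric from Theorem~\ref{explicitmetricPoinc} with the asymptotic data from Theorem~\ref{asympt}.

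First, I would re-express Theorem~\ref{explicitmetricPoinc} in the symplectic $\qq$-basis $(a\Delta^{-1},b)$ used throughout \S\ref{sec:families} to define $\Omega=\Delta\Omega_{Y_x}$ and $\delta=\Delta\delta_{H(\nu)}$. A direct matrix manipulation, using the Riemann relation $\Omega_{Y_x}^t\Delta=\Delta\Omega_{Y_x}$, shows that $\Im(z)^t\Delta(\Im\Omega_{Y_x})^{-1}\Im(z)=(\Im\delta)^t(\Im\Omega)^{-1}\Im\delta$ after the substitution $z=\Delta^{-1}\delta$. Consequently, in the natural holomorphic frame on $\nu^{*}\ca{P}$ pulled back to the universal cover $\mathbb{H}^k\times\Delta^{n-k}$ (coming from the holomorphic cocycle $b_{\ca{P}}$ of Proposition~\ref{prop:1} together with the lifts $\tilde\delta,\tilde\Omega$), the squared norm is
\[
\exp\bigl(-4\pi\,(\Im\tilde\delta)^t(\Im\tilde\Omega)^{-1}\,\Im\tilde\delta\bigr).
\]

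Second, I would substitute the asymptotic expressions from Theorem~\ref{asympt} into this exponent. Since $t_j=\exp(2\pi i z_j)$ we have $\Im z_j=x_j/(2\pi)$, hence $\Im\tilde\Omega=(2\pi)^{-1}(\sum_j x_jA_j+B)$ and $\Im\tilde\delta=(2\pi)^{-1}(\sum_j x_jA_jc_j+a)$ with $a=2\pi\Im\alpha$ and $B=2\pi\Im\psi$. The powers of $2\pi$ combine to give
\[
-4\pi\,(\Im\tilde\delta)^t(\Im\tilde\Omega)^{-1}\Im\tilde\delta
= -2\bigl(\textstyle\sum_j x_jA_jc_j+a\bigr)^t\bigl(\sum_j x_jA_j+B\bigr)^{-1}\bigl(\sum_j x_jA_jc_j+a\bigr),
\]
and taking $-\tfrac{1}{2}\log$ produces precisely the quadratic form in the statement.

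Third, I would identify the function $h$. An arbitrary trivializing section $s$ of $\ca{P}_\nu$ on $(\Delta^{*})^k\times\Delta^{n-k}$ differs, when pulled back to the universal cover, from the canonical holomorphic frame above by a nowhere-vanishing holomorphic function. The multi-valuedness of this function under the deck transformations $z_j\mapsto z_j+1$ is exactly matched by the polynomial contribution $\sum_j z_jA_j$ (resp.\ $\sum_j z_jA_jc_j$) which Theorem~\ref{asympt} absorbs into the quadratic exponent; so the ratio descends to a single-valued holomorphic nowhere-vanishing function $h$ on $(\Delta^{*})^k\times\Delta^{n-k}$. Its meromorphic extension across $D$ is then obtained by comparing $s$ with a rational section of the Lear extension $\Lear{\ca{P}_\nu^{\otimes N},\aabs{-}}_{\ov X}$ (whose existence is recalled in \S\ref{sec:lear-extensions}): the entire non-meromorphic contribution to $-\log\aabs{s}$ is carried by the explicit quadratic term, so the remaining factor is of finite order along $D$.

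The routine part is the purely algebraic substitution in the first two steps. The main subtlety, and the only step requiring care, is the descent in step three: one must verify that the twist introduced by the cocycle $b_{\ca{P}}$ along the monodromy generators is cancelled by the polynomial corrections $\sum_j z_jA_j$ and $\sum_j z_jA_jc_j$ in Theorem~\ref{asympt}, so that the comparison function $h$ is single-valued on $X$ and meromorphic on $\Delta^n$; granted this, the Corollary follows by the calculation sketched above.
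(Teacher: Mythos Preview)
Your steps (1) and (2) are exactly the paper's proof: the change from the integral basis $(a,b)$ to the symplectic $\qq$-basis $(a\Delta^{-1},b)$, the substitution of Theorem~\ref{asympt} into the metric formula of Theorem~\ref{explicitmetricPoinc}, and the $2\pi$-bookkeeping all agree. The paper is correspondingly terse and simply asserts the existence of the meromorphic $h$, so your step (3) attempts to supply what the paper leaves implicit.

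That attempt, however, has a circularity problem. You justify the meromorphic extension of $h$ by comparing $s$ with a rational section of the Lear extension, citing \S\ref{sec:lear-extensions}. But \S\ref{sec:lear-extensions} only \emph{defines} Lear extensions; in this paper the existence of $\Lear{\ca{P}_\nu,\aabs{-}}_{\ov X}$ is re-established in \S4.2 \emph{from} Corollary~\ref{explicitnorm}. Invoking it here either makes the argument circular or renders the later reproof vacuous. Moreover, even granting Lear's theorem as external input, it only controls $\aabs{s}$, and you still need a linear bound on the quadratic term $Q$ in $x_j=-\log|q_j|$ to conclude that $\log|h|$ has at most linear growth; you do not make this step explicit.

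A self-contained argument for step (3) is already latent in your descent discussion. From equation~\eqref{eq:5} and the matrices of the $N_j'$ computed in Theorem~\ref{asympt} (in particular the integrality $\Delta^{-1}A_jc_j\in\zz^g$), the automorphy factor of the canonical holomorphic frame of $\ca{P}_\nu$ under the deck transformation $z_j\mapsto z_j+1$ is a monomial in $q_1,\ldots,q_k$. Hence $\ca{P}_\nu$ itself extends to a holomorphic line bundle on $\Delta^n$ with an explicit frame, and $h$ is just $s$ divided by that frame---manifestly meromorphic on $\Delta^n$ and holomorphic and nonvanishing on $(\Delta^*)^k\times\Delta^{n-k}$.
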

\begin{proof} The vector $z$ and the matrix $\Omega $ in Theorem
  \ref{explicitmetricPoinc} are expressed in the integral basis
  $(a,b)$, while $\delta (x)$ and $\Omega (x)$ are expressed in the
  $\qq$-basis $(a\Delta ^{-1},b)$. Writing $z=\Delta ^{-1}\delta (x)$
  and $\Omega =\Delta ^{-1}\Omega (x)$, we obtain that 
\[ -\log\aabs{s(x)} = -\log\abs{h(x)} + 2\pi (\Im \delta(x))^t (\Im
  \Omega(x))^{-1} (\Im \delta(x))   \]
for a suitable meromorphic function $h$ on $\Delta^n$ which is
holomorphic on $(\Delta^*)^  {k} \times \Delta^{n-k}$. 
Note that, even though $\Omega(x), \delta(x)$ are multivalued, their
imaginary parts are single valued. From Theorem \ref{asympt} we
obtain, noting that $\Im z_j = -\frac{1}{2\pi}\log|t_j|$,
\[  
\Im \Omega (x)= -\frac{1}{2\pi} \sum_{j=1}^k A_j \log|t_j| + \Im
\psi \, , \quad 
\Im \delta (x)= -\frac{1}{2\pi} \sum_{j=1}^k A_jc_j \log|t_j| + \Im \alpha
\, .  
\]
Combining we find equation (\ref{eq:def_of_f_eta}).
\end{proof}

\section{Normlike functions}
 \label{technical}

\newcommand{\tmatrix}{A}
\newcommand{\tvector}{c}
\newcommand{\tmatfun}{B}
\newcommand{\tvectfun}{a}

The purpose of this section is to carry out a systematic study of the functions
\[ \varphi = \left(\sum_{j=1}^kx_jA_jc_j + a \right)^t \left(\sum_{j=1}^kx_jA_j + B \right)^{-1} \left(\sum_{j=1}^kx_jA_jc_j + a \right)   \] 
that appear on the right hand side of the equality in Corollary
\ref{explicitnorm}. We call such functions \emph{normlike}
functions.

Let $f\colon \rr^{n}\to \rr$ be a function. The \emph{recession function} of
$f$ is the function
\begin{displaymath}
  \on{rec}(f)(x)=\lim_{\lambda \to +\infty}\frac{1}{\lambda }f(\lambda  x).
\end{displaymath}
If the recession function exists, it is homogeneous of weight one, in
the sense that
\begin{displaymath}
  \on{rec}(f)(\mu x)=\mu \on{rec}(f)(x), \text{ for } \mu\in \rr_{>0}.
\end{displaymath}

We show that normlike functions $\varphi$ have a well-defined recession
function $\on{rec}(\varphi)$ with respect to the variables $x_j$, and
we are able to calculate $\on{rec} (\varphi)$ explicitly. In our main
technical lemma Theorem
\ref{thm:main_technical} we give bounds for the difference $\varphi -
\on{rec} (\varphi)$ and, in the case where $k=1$, for the first and
second order derivatives of $\varphi - \on{rec} (\varphi)$. The bound on
the difference will be key to the proof of our first main result
Theorem \ref{singbiext}, the bounds on the derivatives will be used in
our proof of Theorem \ref{localint}. In section
\ref{propertiesnormlike} we prove, among other things, that the
recession functions $\on{rec}( \varphi)$ are convex. This will lead to
the effectivity statement in Theorem \ref{theorem:effectivity}.

\subsection{Some definitions} 

Recall that we have denoted by $M_{r}(\bb{R})$
the space of $r$-by-$r$ matrices with real coefficients, by
$S_r^+(\bb{R})\subset M_{r}(\bb{R})$ the cone of 
symmetric positive semidefinite real matrices inside $M_r(\bb{R})$,
and by $S_r^{++}(\bb{R})\subset S^+_r(\bb{R})$ the cone of symmetric
positive definite real matrices.

\begin{lemma} \label{lem:simultaneous} Let $N_{1},\dots,N_{k}$ be a
  finite set of positive semidefinite symmetric real $g$-by-$g$ matrices such that $N_{1}+\dots+N_{k}$ has rank $r$. Then there
  exists an orthogonal matrix $u \in O_g(\bb{R})$ such that, upon writing $M_{i}=u^tN_iu$ for $i=1,\dots,k$, we have
\begin{equation*}
M_i = \begin{pmatrix}[c|c] M_i' & 0\\
\hline 
0 & 0 \\
\end{pmatrix}, 
\end{equation*}
with all $M_i'\in S_{r}^+(\bb{R})$ and $\sum M_i'\in S_{r}^{++}(\bb{R})$.
\end{lemma}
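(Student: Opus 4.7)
The plan is to use the fact that for positive semidefinite matrices, the kernel of a sum equals the intersection of kernels, and then to diagonalize by an orthonormal basis adapted to this common kernel.

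First I would set $N = N_1 + \cdots + N_k$ and establish the identity
\[
\ker N = \bigcap_{i=1}^k \ker N_i .
\]
The inclusion $\supseteq$ is immediate. For $\subseteq$, if $Nv=0$ then $v^t N v = 0$, so $\sum_{i=1}^k v^t N_i v = 0$; since each $N_i$ is positive semidefinite each summand is nonnegative, hence each equals $0$. Writing $N_i = C_i^t C_i$ (Cholesky or spectral decomposition), $v^t N_i v = \lvert C_i v\rvert^2 = 0$ forces $C_i v = 0$, hence $N_i v = 0$.

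Next I would choose an orthonormal basis $(e_1,\ldots,e_g)$ of $\mathbb{R}^g$ such that $(e_1,\ldots,e_r)$ is an orthonormal basis of $(\ker N)^\perp$ and $(e_{r+1},\ldots,e_g)$ is an orthonormal basis of $\ker N$; this is possible because $N$ has rank $r$, so $\dim \ker N = g-r$. Let $u \in O_g(\bb{R})$ be the orthogonal matrix whose columns are $e_1,\ldots,e_g$. For each $i$ and each $j > r$, the vector $u e_j = e_j$ lies in $\ker N$, hence in $\ker N_i$ by the identity above; therefore the $j$-th column of $N_i u$ vanishes, so the $j$-th column of $M_i \defeq u^t N_i u$ vanishes. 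Since $N_i$ is symmetric, $M_i$ is symmetric as well, and so its $j$-th row also vanishes for $j>r$. This exhibits $M_i$ in the required block form with upper-left block $M_i' \in S_r^+(\bb{R})$ (the restriction of a positive semidefinite form is positive semidefinite).

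Finally, I would verify that $\sum_{i=1}^k M_i' \in S_r^{++}(\bb{R})$. Indeed, $\sum_i M_i' $ is the upper-left $r$-by-$r$ block of $u^t N u$, and represents the quadratic form $v \mapsto v^t N v$ restricted to $(\ker N)^\perp$. Since $\ker\bigl(N|_{(\ker N)^\perp}\bigr) = (\ker N)^\perp \cap \ker N = 0$, this restricted form is positive definite, which is the required conclusion. No step is really an obstacle; the only substantive point is the kernel identity in the first paragraph, which rests on positive semidefiniteness of the $N_i$.
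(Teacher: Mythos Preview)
Your proof is correct and follows essentially the same approach as the paper: both arguments hinge on showing that $\ker N = \bigcap_i \ker N_i$ (the paper phrases this as ``$Q_i(f_\alpha,f_\alpha)=0 \Rightarrow Q_i(f_\alpha,f_\beta)=0$'' via a Cauchy--Schwarz inequality, while you use a factorization $N_i=C_i^tC_i$), and then pass to an orthonormal basis adapted to $\ker N$. The only cosmetic difference is that the paper invokes the full spectral theorem to diagonalize $N$, whereas you observe that any orthonormal basis splitting off $\ker N$ suffices.
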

\begin{proof} It will be convenient to use the language of bilinear forms. If $Q$ is a symmetric positive semidefinite bilinear
  form on $\bb{R}^g$ and $f_1,\ldots, f_g$ is a basis of $\bb{R}^g$
  such that $Q(f_\alpha ,f_\alpha )=0$ for $\alpha =r+1,\ldots,g$,
  then $Q(f_\alpha ,f_\beta )=0$ 
  for $\beta =1,\ldots,g$ and $\alpha =r+1,\ldots,g$. Indeed, for all $\lambda
  \in \bb{R}$ we have  $Q(\lambda f_\alpha -f_\beta ,\lambda f_\alpha  - f_\beta ) \geq 0$,
  that is \[ -2\lambda Q(f_\alpha ,f_\beta ) + Q(f_\beta ,f_\beta ) \geq 0 \, . \]
  Since this inequality is satisfied for all $\lambda$ we
  deduce that $Q(f_\alpha ,f_\beta )=0$.

Let $N=N_1+\cdots+N_k$, and denote by $Q$ the symmetric positive semidefinite bilinear form that $N$ defines on the standard basis $(e_1,\ldots,e_g)$ of $\bb{R}^g$. Note that $Q$ has rank $r$. By the spectral theorem, upon replacing the basis $(e_1,\ldots,e_g)$ of $\bb{R}^g$ by $(f_1,\ldots,f_g)=(e_1,\ldots,e_g)u$ for some orthogonal matrix $u$ we can assume that 
the expression of $Q$ in the basis $(f_1,\ldots,f_g)$ is  
\begin{equation*}
  M= \begin{pmatrix}[c|c] A & 0\\
    \hline 
    0 & 0 \\
\end{pmatrix}, 
\end{equation*}
with $A\in S^{+}_{r}(\bb{R})$ invertible and diagonal. In particular,
$Q(f_\alpha ,f_\alpha )=0$ for $\alpha =r+1,\dots, g$. For $i=1,\ldots,k$ let $Q_i$ denote the symmetric positive semi-definite bilinear form that $N_i$ defines on the standard basis $(e_1,\ldots,e_g)$ of $\bb{R}^g$. Note that $Q=Q_1+\cdots+Q_k$. Since all the $Q_{i}$ are
positive semidefinite, we deduce that $Q_{i}(f_\alpha ,f_\alpha )=0$ for
$i=1,\dots ,k$. 
Note that $M_i=u^tN_iu$ is the
expression of $Q_{i}$ in the basis $(f_1,\ldots,f_g)$. By the previous discussion we have
\[ M_i= \begin{pmatrix}[c|c] M_i' & 0 \\
\hline 
0 & 0 \\
\end{pmatrix} \, ,\]
with $M'_{i}\in S_{r}^+(\bb{R})$ and $\sum M_i'=A\in
S_{r}^{++}(\bb{R})$, proving the lemma.
\end{proof}

Suppose we are given the following data:
\begin{itemize}
\item[-] three integers $k \ge 0$, $m \ge 0$, $g \ge 0$;
\item[-] a real number $\kappa \ge 0$;
\item[-] a compact subset $K \sub \bb{R}^m$;
\item[-] matrices \label{item:1} $\tmatrix_1, \ldots, \tmatrix_k \in S_g^+(\bb{R})$ all
  of rank $\ge 1$;
\item[-] vectors $\tvector_1, \ldots, \tvector_k \in \bb{R}^g$;
\item[-] functions $\tvectfun\colon K \ra \bb{R}^g$ and $\tmatfun\colon K \ra
  S_g(\bb{R})$ which are  restrictions of smooth functions on some open
  neighbourhood of $K$; 
\end{itemize}
such that for all $(x_1,\ldots,x_k, \lambda) \in \bb{R}_{>\kappa}^k
\times K$, we have that 
\begin{equation}\label{equation:positivity}
P(x_1,\ldots,x_k,\lambda) \defeq \sum_{i=1}^kx_i\tmatrix_i + \tmatfun(\lambda) >0 \, . 
\end{equation}
Note that if $g=0$, then necessarily $k=0$.

To these data we associate a smooth function $\varphi\colon
\bb{R}_{>\kappa}^k\times K 
\ra \bb{R}$ by
\begin{multline}
  \varphi(x_1, \ldots, x_k, \lambda) =\\
\left( \sum_{i=1}^kx_i\tmatrix_i\tvector_i + \tvectfun(\lambda) \right)^t 
\left( \sum_{i=1}^kx_i\tmatrix_i + \tmatfun(\lambda) \right)^{-1}
\left( \sum_{i=1}^kx_i\tmatrix_i\tvector_i + \tvectfun(\lambda) \right) .
\end{multline}
By condition (\ref{equation:positivity}),  the function $\varphi$ is
well-defined and its values are non-negative. We call $\varphi$ the
\emph{normlike} function associated to the $4$-tuple
$((\tmatrix_i), (\tvector_i), \tvectfun, \tmatfun)$. We call the natural number $k$ the
\emph{dimension} of $\varphi$.  Write
$r = \on{rk} \sum_{i=1}^k x_i \tmatrix_i$ for some (hence all)
$(x_1, \ldots, x_k) \in \bb{R}^k_{>\kappa}$. Note that
$r \geq 1$ if $k >0$. 

Let $u \in O_g(\bb{R})$. Replacing the vector $\tvector_i$ by
$u^{-1}\tvector_i$, $\tvectfun$ by $u^{-1}\tvectfun$, 
the matrix $\tmatfun$ by $u^{t}\tmatfun u$ and $\tmatrix_i$ by $u^{t}\tmatrix_i u$ one checks that the function $\varphi$ remains unchanged.
By Lemma \ref{lem:simultaneous} we can thus restrict to considering normlike functions where the $A_i$ have the shape
\begin{equation*}
\tmatrix_i = \begin{pmatrix}[c|c] \tmatrix_i'& 0_{r,g-r}\\
\hline 
0_{g-r,r} & 0_{g-r,g-r}\\
\end{pmatrix}, 
\end{equation*}
with each $\tmatrix'_i \in S_r^+(\bb{R})$ and such that $\sum x_i
\tmatrix'_i\in S^{++}_r(\bb{R})$ for all $(x_1,\ldots,x_k) \in
\bb{R}_{>\kappa}^k$ (hence for all 
$(x_1,\ldots,x_k)\in\bb{R}_{>0}^k$).

From now on we assume that the matrices $A_i$ indeed have this shape.
We write
\begin{equation*}
\tvector_i = \mat{\tvector_i'\\
\hline
\star_{g-r}}, \;\;\; 
\tvectfun = \mat{\tvectfun_1\\
\hline
\tvectfun_2}, \;\;\; 
\text{and} \;\;\; \tmatfun =  \begin{pmatrix}[c|c] \tmatfun_{11}& \tmatfun_{12}\\
\hline 
\tmatfun_{21} & \tmatfun_{22}\\
\end{pmatrix}
\end{equation*}
where $c_i'$ and $a_1$ have size $r$, and $B_{11}$ is an $r$-by-$r$ matrix. 
The second block of the vector $\tvector_{i}$ is marked with an asterisk
because the function $\varphi$ is independent of its value.
 Condition \ref{equation:positivity} implies that
$\tmatfun_{22}(\lambda)$ is positive definite for all
$\lambda \in K$, and the symmetry of $\tmatfun$ implies that $\tmatfun_{21}=\tmatfun_{12}^t$.

We define another smooth function $  f \colon
\bb{R}_{>\kappa}^k\times K \ra \bb{R}$ by 
\begin{equation} \label{recessionexplicit} 
  f(x_1, \ldots, x_k, \lambda) =
  \left(\sum_{i=1}^kx_i\tmatrix'_i\tvector'_i\right)^t
  \left(\sum_{i=1}^kx_i\tmatrix'_i\right)^{-1}
  \left(\sum_{i=1}^kx_i\tmatrix'_i\tvector'_i\right). 
\end{equation}
This function $f$ is well defined as $\sum_{i=1}^kx_i\tmatrix'_i$ is positive definite on $\bb{R}_{> 0}^k$.
The function $f$ depends trivially on $\lambda$ and is clearly
homogeneous of degree 1 in the $x_i$, and so defines a smooth
function $\bb{R}_{> 0}^k\ra \bb{R}$, which we also call $f$. Again, the values of $f$ are non-negative. By convention, if $k=0$, the function $f$ is zero.

Finally, the ``recession'' of $\varphi$ is defined as the pointwise limit
\begin{equation*}
\begin{matrix}
\on{rec}(\varphi)\colon & \bb{R}_{>\kappa}^k\times K & \ra &\bb{R}\\
&(x_1, \ldots, x_k, \lambda) & \mapsto &\on{lim}_{\mu \ra \infty}
\frac{1}{\mu} \varphi(\mu x_1, \ldots, \mu x_k, \lambda),  
\end{matrix}
\end{equation*}
if it exists. Again, if $k=0$, then $\on{rec}(\varphi)=0$. 

\subsection{Statement of the technical lemma}

We can now state the ``main technical lemma'':
\begin{theorem} \label{thm:main_technical}
In the notation of the previous section, write
$\varphi_{0}=\varphi-f$. Note that $\varphi_0$ is a smooth function on $\bb{R}_{>\kappa} \times K$. Then
\begin{enumerate}
\item \label{main_technical_bounddiff} the function $\abs{\varphi_{0}}$ is
  bounded on $\bb{R}_{>\kappa'}^k \times K$ for some $\kappa' \ge
  \kappa$. The recession of $\varphi $ exists and is equal to $f$. In
  particular, $\on{rec}( \varphi)$ is independent of the
  parameter~$\lambda$;
\item \label{main_technical_fbound} the function $f$ is bounded on the
  open simplex $\Delta^0 = \{(x_1, \ldots, x_k) \in \bb{R}_{>0}^k :
  \sum_{i=1}^k x_i = 1\}$;
\item \label{main_technical_k=1} when $k=1$, 
\begin{enumerate}
\item the function $\varphi_0 \colon \bb{R}_{>\kappa} \times K \ra \bb{R}$
  extends continuously to a function from $\ov{\bb{R}_{>\kappa}}
  \times K$ to $\bb{R}$, where by $\ov{\bb{R}_{>\kappa}}$ we denote
  $\bb{R}_{>\kappa} \sqcup \{\infty\}$ with the natural topology; 
\item the derivatives of $\varphi_{0}$ satisfy the estimates
\begin{equation*}
\frac{\partial \varphi_0}{\partial x_1} = O(x_1^{-2}) \quad \textrm{and} \quad 
\frac{\partial^2 \varphi_0}{\partial x_1^2} = O(x_1^{-3}),
\end{equation*}
as $x_1 \to \infty$, where the implicit constant is uniform in $K$.
\end{enumerate}
\end{enumerate}
\end{theorem}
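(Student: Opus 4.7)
The plan is to reduce the general situation, via a Schur-complement computation, to the ``rank-$r$'' piece $\phi(x,\lambda) := (v+\tilde a_1)^t(S+\tilde B_{11})^{-1}(v+\tilde a_1)$, and then to attack the three parts: (2) by a variational argument for $f$, (1) by ray compactification coupled with a continuous extension of the ``recession direction vector'' $u^*(y) := S_y^{-1} v_y$ to the closed simplex $\overline{\Delta^0}$, and (3) by an explicit Neumann-series expansion in $x_1^{-1}$. Starting from the block form given by Lemma~\ref{lem:simultaneous}, write $P = \sum x_i A_i + B$ in blocks with respect to the decomposition $\bb{R}^g = \bb{R}^r \oplus \bb{R}^{g-r}$. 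The lower-right block $B_{22}$ is positive definite on $K$ by the positivity assumption~\eqref{equation:positivity}, and performing a Schur complement with respect to it yields, by a direct computation,
\[ \varphi(x,\lambda) = \phi(x,\lambda) + a_2^t B_{22}^{-1} a_2, \]
with $\tilde a_1 := a_1 - B_{12}B_{22}^{-1} a_2$ and $\tilde B_{11} := B_{11} - B_{12}B_{22}^{-1}B_{21}$. The second summand is continuous on the compact set $K$, hence bounded, so the analysis of $\varphi_0 = \varphi - f$ reduces to that of $\phi - f$.

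For part~(2), I would exploit the Fenchel-type identity
\[ f(x) = v^t S^{-1} v = \max_{u \in \bb{R}^r} \sum_i x_i \bigl((c_i')^t A_i' c_i' - (u - c_i')^t A_i' (u - c_i')\bigr). \]
Since each bracketed term is at most $(c_i')^t A_i' c_i'$, one obtains $f(x) \le \sum_i x_i (c_i')^t A_i' c_i'$ on all of $\bb{R}^k_{>0}$, and restricting to $\Delta^0$ gives $f \le \max_i (c_i')^t A_i' c_i'$.

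For part~(1), the recession statement follows from scaling $x = \mu y$ with $y \in \Delta^0$ and the resolvent identity $(S+\tilde B_{11})^{-1} = S^{-1} - S^{-1}\tilde B_{11} (S+\tilde B_{11})^{-1}$, which yield $\mu^{-1} \phi(\mu y,\lambda) \to v_y^t S_y^{-1} v_y = f(y)$ as $\mu \to \infty$. To establish boundedness of $\varphi - f$ on $\bb{R}^k_{>\kappa'}$ for some $\kappa' \ge \kappa$, I would parametrise this set by $(\mu, y) := (\sum x_i, x/\sum x_i)$ and compactify to the compact region $K' := \{(\mu, y) : \mu \in [k\kappa', \infty],\ y \in \overline{\Delta^0},\ \mu y_i \ge \kappa' \text{ for all } i\}$; it then suffices to show that $\varphi - f$ extends continuously to $K' \times K$. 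The interior case is routine by smoothness of $\varphi$, and on the face $\mu = \infty$ the same resolvent expansion gives the limit
\[ 2 u^*(y)^t \tilde a_1(\lambda) - u^*(y)^t \tilde B_{11}(\lambda) u^*(y) + a_2^t B_{22}^{-1} a_2, \qquad u^*(y) := S_y^{-1} v_y. \]
The main technical point is that $u^*(y)$ extends continuously from $\Delta^0$ to $\overline{\Delta^0}$. The key observation is that $v_y \in \on{Im}(S_y)$ for every $y \in \overline{\Delta^0}$, because $A_i' c_i' \in \on{Im}(A_i')$ for each $i$. On each closed face $\{y : y_i = 0 \text{ for } i \notin I\}$ one applies Lemma~\ref{lem:simultaneous} to the restricted family $\{A_i' : i \in I\}$; a block change of basis then reduces the problem to an analogous one of strictly smaller rank, and induction on $r$ produces the continuous extension of $u^*$.

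For part~(3), when $k = 1$ the matrix $S = x_1 A_1'$ is invertible and
\[ (x_1 A_1' + \tilde B_{11})^{-1} = x_1^{-1}(A_1' + x_1^{-1}\tilde B_{11})^{-1} \]
admits a Neumann-series expansion in powers of $x_1^{-1}$ for large $x_1$, uniformly in $\lambda \in K$. Substituting into $\phi$ yields
\[ \varphi_0(x_1, \lambda) = \bigl(2(c_1')^t \tilde a_1 - (c_1')^t \tilde B_{11} c_1' + a_2^t B_{22}^{-1} a_2\bigr) + O(x_1^{-1}), \]
uniformly in $\lambda \in K$; continuous extension to $x_1 = \infty$ follows, and the derivative bounds $\partial_{x_1}\varphi_0 = O(x_1^{-2})$, $\partial_{x_1}^2 \varphi_0 = O(x_1^{-3})$ are obtained by differentiating the expansion termwise. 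The main obstacle throughout is the continuous extension of $u^*$ at $\partial \overline{\Delta^0}$ in part~(1): the degeneration of $S_y$ there must be compensated by the containment $v_y \in \on{Im}(S_y)$, and the clean inductive reduction via Lemma~\ref{lem:simultaneous} stratum by stratum is what makes the argument go through.
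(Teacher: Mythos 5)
Your Schur-complement reduction to the rank-$r$ piece $\phi$ is correct (it is equivalent to the block-inverse computation in the paper), and your variational proof of part~(2) via the Fenchel-type identity
\[
v^t S^{-1} v = \max_{u\in\bb{R}^r}\ \bigl(2u^tv - u^tSu\bigr) = \max_{u}\sum_i x_i\bigl((c_i')^tA_i'c_i'-(u-c_i')^tA_i'(u-c_i')\bigr) \le \sum_i x_i (c_i')^t A_i' c_i'
\]
is a clean and genuinely different route: the paper instead sorts the variables, reduces to a ``flag condition'' via the substitution $y_1=x_1$, $y_i=x_i-x_{i-1}$, and proves a hands-on determinant/cofactor bound (its Lemma~3.7) for the entries of $(\sum x_iA_i')^{-1}$, which is heavier but then also drives part~(1). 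Your part~(3) is essentially the paper's argument (a Neumann expansion of $(x_1A_1'+\tilde B_{11})^{-1}$ in powers of $x_1^{-1}$, uniformly in $\lambda$).

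However, your proposed proof of part~(1) has a genuine gap. You want to compactify along rays by $(\mu,y)=(\sum x_i,\ x/\sum x_i)$ and argue that $\varphi-f$ extends continuously to the face $\{\infty\}\times\overline{\Delta^0}$, with limit $2u^*(y)^t\tilde a_1-u^*(y)^t\tilde B_{11}u^*(y)+a_2^tB_{22}^{-1}a_2$ where $u^*(y)=S_y^{-1}v_y$. This fails: neither $u^*(y)$ nor the limit of $\varphi-f$ is continuous across the strata of $\partial\overline{\Delta^0}$, because the pseudoinverse $S_y^{-1}$ (and with it $S_y^{-1}v_y$) is not continuous where the rank of $S_y$ drops. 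A concrete counterexample, already with $m=0$ and $\tilde B_{11}=0$: take $k=3$, $r=2$,
\[
A_1'=\begin{pmatrix}1&0\\0&0\end{pmatrix},\quad
A_2'=\begin{pmatrix}0&0\\0&1\end{pmatrix},\quad
A_3'=\begin{pmatrix}1&1\\1&1\end{pmatrix},\quad
c_1'=c_2'=0,\ c_3'=\begin{pmatrix}1\\0\end{pmatrix},\ a=\begin{pmatrix}0\\1\end{pmatrix}.
\]
Then $v=x_3(1,1)^t$, $S^{-1}=(x_1x_2+x_1x_3+x_2x_3)^{-1}\bigl(\begin{smallmatrix}x_2+x_3&-x_3\\-x_3&x_1+x_3\end{smallmatrix}\bigr)$, and $\varphi-f=2a^tS^{-1}v+a^tS^{-1}a$. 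Along $x_1=n,\ x_2=2,\ x_3=1$ one finds $\varphi-f\to 1$ as $n\to\infty$, while along $x_1=n,\ x_2=1,\ x_3=2$ one finds $\varphi-f\to 5/3$; both sequences have $y\to(1,0,0)$ in $\overline{\Delta^0}$ and $\mu\to\infty$, so $\varphi-f$ does \emph{not} extend continuously to your compactified region. (Consistently, $u^*(y)$ has path-dependent limits at the vertex $(1,0,0)$.) Your ``inductive reduction via Lemma~\ref{lem:simultaneous}, stratum by stratum'' cannot repair this, because the requisite basis change itself jumps discontinuously across strata. What the paper proves is strictly weaker than your intermediate claim --- only \emph{boundedness} of $\varphi-f$, not continuous extendability (its Example~3.4 is a warning in this direction, in a coarser compactification, but as the example above shows the failure persists in the ray compactification as well) --- and the paper obtains this boundedness from uniform entrywise estimates on $(\sum x_iA_i')^{-1}$, via the sorting/flag reduction plus determinant and cofactor bounds, with no appeal to continuity at the boundary of the simplex. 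You would need to replace your extension argument by a direct, uniform estimate of that kind.

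Finally, a smaller point: the observation that $v_y\in\on{Im}(S_y)$ for all $y\in\overline{\Delta^0}$ (which holds because $\on{Im}(S_y)=\sum_{y_i>0}\on{Im}(A_i')$ for positive semidefinite summands) does make $S_y^{-1}v_y$ \emph{well-defined} on each stratum via the pseudoinverse, and guarantees $\ov f$ is finite on $\overline{\Delta^0}$; but it does not imply continuity of $y\mapsto S_y^{-1}v_y$ across strata, which is the crux. Continuity of $\ov f$ itself is what the paper deduces indirectly, from convexity and Rockafellar's theorem, not from any continuity of the minimizer/recession-direction vector.
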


\begin{example}\label{exm:1} When $k>1$, in general we can not extend
  $\varphi_{0}$ to a continuous function on $\ov{\bb{R}_{>\kappa}}^{k}
  \times K$ as the following example shows. Put $g=1$, $k=2$, $m=0$,
  $\tmatrix _{1}=\tmatrix _{2}=1$, $\tvector_{1}=1$, $\tvector_{2}=2$, $\tmatfun=0$, $\kappa=1$ and
  $\tvectfun=1$. Then
  \begin{displaymath}
    \varphi_{0}=\varphi-f=\frac{2(x_{1}+2x_{2})+1}{x_1+x_2}.
  \end{displaymath}
  The sequences $\{(n,n)\}_{n\ge 1}$ and $\{(n,2n)\}_{n\ge 1}$
converge, when $n\to \infty$, to the point $(\infty,\infty)\in
\ov{\bb{R}_{>1}}^{2}$. Nevertheless
\begin{displaymath}
  \lim_{n\to \infty} \varphi_{0}(n,n)=3,\qquad 
  \lim_{n\to \infty} \varphi_{0}(n,2n)=\frac{10}{3},
\end{displaymath}
showing that $\varphi_{0}$ can not be continuously extended to
$\ov{\bb{R}_{>1}}^{2}$. 
\end{example}

Before starting the proof of Theorem \ref{thm:main_technical} we recall a few easy statements related to
Schur complements and inverting a symmetric block matrix. For a
symmetric block matrix
\[ M = \begin{pmatrix}[c|c] A & B \\
\hline 
B^t & C \\
\end{pmatrix} \]
with $C$ invertible we call $A - BC^{-1}B^t$ the \emph{Schur
  complement} of the block $C$ in $M$. We have a product decomposition 
\[ M = \begin{pmatrix}[c|c] A & B \\
\hline 
B^t & C \\
\end{pmatrix} = \begin{pmatrix}[c|c] 1 & BC^{-1} \\
\hline 
0 & 1 \\
\end{pmatrix} 
\begin{pmatrix}[c|c] A - BC^{-1}B^t & 0 \\
\hline 
0 & C \\
\end{pmatrix} 
\begin{pmatrix}[c|c] 1 & 0 \\
\hline 
C^{-1}B^t & 1 \\
\end{pmatrix}
\, .  \]
In particular, $M$ is invertible if and only if $A-BC^{-1}B^t$ is invertible, and if these conditions are satisfied we have
\[ M^{-1} = \begin{pmatrix}[c|c] (A-BC^{-1}B^t)^{-1} & -(A-BC^{-1}B^t)^{-1}BC^{-1}  \\
\hline 
-C^{-1}B^t(A-BC^{-1}B^t)^{-1} & C^{-1} + C^{-1}B^t(A-BC^{-1}B^t)^{-1}BC^{-1} \\
\end{pmatrix} \, . \]
Also, if $M$ is positive semidefinite, then so is the Schur complement 
$A - BC^{-1}B^t$.

\subsection{Proof of the technical lemma}

First we observe that, if $k=0$, then $\varphi$ is a continuous function on a compact set, hence is bounded. Moreover, the
function $f$ is zero. Thus the statements are trivially true and we
are reduced to the case $k>0$ and hence $g>0$.

Assume that we have already shown that $\abs{\varphi-f}$ is bounded on
$\bb{R}_{>\kappa'}^k \times K$. Then, for each
$(x_1,\ldots,x_k,\lambda) \in \bb{R}_{>\kappa'}^k  \times K$ we have
\[ \lim_{\mu \to \infty} \frac{1}{\mu} \varphi(\mu x_1,\ldots,\mu
x_k,\lambda) = \lim_{\mu \to \infty} \frac{1}{\mu} f(\mu
x_1,\ldots,\mu x_k) \, . \] 
The latter limit exists and is equal to $f(x_1,\ldots,x_k)$ by
weight-one-homogeneity of $f$. Thus the recession function of
$\varphi$ exists and agrees with $f$. In consequence, in order to
prove Theorem \ref{thm:main_technical}.\ref{main_technical_bounddiff} and
\ref{thm:main_technical}.\ref{main_technical_fbound} we only need to 
show the boundedness of $\abs{\varphi-f}$ and of $f$ on the required
subsets. 

We next show that we can assume a simplifying hypothesis.

\begin{definition}\label{def:1} We say that the set of symmetric positive semidefinite matrices
  $\tmatrix _{1},\dots, \tmatrix _{k}$ satisfies the \emph{flag condition}
  if $\on{Ker}(\tmatrix_i) \subseteq \on{Ker}(\tmatrix_{i+1})$, for
  $i=1,\dots,k-1$.
\end{definition}

Consider the subset
\[ U = \{ 0 < x_1 \leq x_2 \leq \cdots \leq x_k \} \subset \bb{R}_{>0}^k \, .\]
Since
\[ \bb{R}_{>\kappa}^k = \bigcup_{\sigma \in \mf{S}_k} \left( \sigma^{-1}U \cap \bb{R}_{>\kappa}^k \right) \]
and
\[ \Delta^0 = \bigcup_{\sigma \in \mf{S}_k} \left( \sigma^{-1}U \cap \Delta^0 \right) \, , \]
by symmetry it is enough to prove the boundedness of $|\varphi-f|$ in
$U \cap \bb{R}_{>\kappa}^k$ and of $f$ in $U \cap \Delta^0$. Writing
$y_1=x_1$, $y_i=x_i-x_{i-1}$ for $i=2,\ldots,k$ we find that
$x_i = \sum_{j=1}^i y_j$ and that $U \cap \bb{R}_{>\kappa}^k$ is
parametrized by the set $y_1 > \kappa$, $y_2, \ldots, y_k \geq 0$.

Note that
\begin{displaymath}
  \sum_{i=1}^k x_i \tmatrix_i = \sum_{i=1}^k y_i \sum_{j=i}^k \tmatrix_j
\quad\text{and}\quad
\sum_{i=1}^k x_i \tmatrix_i\tvector_i = \sum_{i=1}^k y_i \sum_{j=i}^k
\tmatrix_j\tvector_j.
\end{displaymath}

\begin{lemma}
Writing $\tilde{\tmatrix}_i = \sum_{j=i}^k \tmatrix_j$ we have that
$\mathrm{Ker}\, \tilde{\tmatrix}_i \subseteq \mathrm{Ker}\,
\tilde{\tmatrix}_{i+1}$.
Moreover we have
$\Im(\tilde \tmatrix_{i})=\sum_{j=i}^{k} \Im( \tmatrix_j )$. 
\end{lemma}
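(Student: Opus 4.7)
The plan is to reduce both claims to the single classical fact that for symmetric positive semidefinite matrices $A, B \in S_g^+(\bb{R})$, one has
\[
\on{Ker}(A+B) = \on{Ker}(A) \cap \on{Ker}(B).
\]
The nontrivial inclusion is seen as follows: if $(A+B)v = 0$, then $v^t A v + v^t B v = 0$, and since both terms are nonnegative they both vanish. For a symmetric positive semidefinite matrix, $v^t A v = 0$ forces $Av=0$ (write $A = L^tL$ via a Cholesky-type factorization, so $\|Lv\|^2 = 0$, hence $Lv=0$, hence $Av=0$), so $v \in \on{Ker}(A) \cap \on{Ker}(B)$.

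First I would iterate this identity. Since $\tilde{\tmatrix}_i = \tmatrix_i + \tmatrix_{i+1} + \cdots + \tmatrix_k$ is a sum of positive semidefinite matrices, an induction on $k-i$ gives
\[
\on{Ker}(\tilde{\tmatrix}_i) = \bigcap_{j=i}^{k} \on{Ker}(\tmatrix_j).
\]
The first assertion of the lemma, namely $\on{Ker}(\tilde{\tmatrix}_i) \subseteq \on{Ker}(\tilde{\tmatrix}_{i+1})$, is then immediate from
\[
\bigcap_{j=i}^{k} \on{Ker}(\tmatrix_j) \subseteq \bigcap_{j=i+1}^{k} \on{Ker}(\tmatrix_j).
\]

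For the second assertion, which I read as $\Im(\tilde{\tmatrix}_i) = \sum_{j=i}^{k} \Im(\tmatrix_j)$, I would take orthogonal complements. For any real symmetric matrix $M$, one has $\Im(M) = \on{Ker}(M)^{\perp}$. Applying this together with the kernel formula above and the standard identity $(\bigcap_j V_j)^\perp = \sum_j V_j^\perp$ for subspaces of a Euclidean space, we get
\[
\Im(\tilde{\tmatrix}_i) = \on{Ker}(\tilde{\tmatrix}_i)^\perp = \Bigl(\bigcap_{j=i}^{k} \on{Ker}(\tmatrix_j)\Bigr)^\perp = \sum_{j=i}^{k} \on{Ker}(\tmatrix_j)^\perp = \sum_{j=i}^{k} \Im(\tmatrix_j),
\]
as required.

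There is essentially no obstacle here, the whole proof being short and elementary; the only point that demands care is the use of positive semidefiniteness, which is essential since for general symmetric matrices one only has $\on{Ker}(A) \cap \on{Ker}(B) \subseteq \on{Ker}(A+B)$ and the reverse inclusion can fail.
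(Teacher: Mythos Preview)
Your proof is correct and follows essentially the same route as the paper: both establish $\Ker(\tilde{A}_i)=\bigcap_{j\ge i}\Ker(A_j)$ from the fact that $v^tAv=0\Rightarrow Av=0$ for positive semidefinite $A$, deduce the kernel inclusion, and then pass to images via orthogonal complements. The only cosmetic difference is that you invoke a Cholesky factorization for the implication $v^tAv=0\Rightarrow Av=0$, whereas the paper uses the inequality $0\le (y+\lambda x)^tA(y+\lambda x)$ for all $\lambda$; and you are right that the second assertion should read $\Im(\tilde{A}_i)=\sum_{j\ge i}\Im(A_j)$, as the paper's own proof confirms.
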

\begin{proof} We first observe that, if $\tmatrix$ is a symmetric positive
  semidefinite real matrix, then $\tmatrix x=0$ if and only if
  $x^t\tmatrix x=0$. Indeed, clearly $\tmatrix x=0$ implies
  $x^{t}\tmatrix x=0$. Conversely, 
  assume that $x^{t}\tmatrix x=0$ and let $y$ be any vector. Then, for all
  $\lambda \in \rr$, 
  \begin{displaymath}
    0\le (y+\lambda x)^{t}\tmatrix (y+\lambda x)=y^{t}\tmatrix
    y+2\lambda y^{t}\tmatrix x  
  \end{displaymath}
  which implies that $y^{t}\tmatrix x=0$. Therefore $\tmatrix x=0$. 

  We show that this observation implies that $\Ker \tilde
  \tmatrix _{i}=\bigcap_{j=i}^{k}\Ker \tmatrix _{j}$. We have $x\in \Ker \tilde
  \tmatrix _{i}$ if and only if
  \begin{displaymath}
    0=x^{t}\tilde \tmatrix _{i}x=\sum_{j=i}^{k}x^{t}\tmatrix _{j}x.
  \end{displaymath}
  Since the matrices $\tmatrix _{j}$ are positive semidefinite this implies
  that $x^{t}\tmatrix _{j}x=0$, $j=i,\dots,k$. Therefore $x\in
  \bigcap_{j=i}^{k}\Ker \tmatrix _{j}$. The converse is clear. As a result
  \begin{displaymath}
    \mathrm{Ker}\, \tilde{\tmatrix}_i =\bigcap_{j=i}^{k}\Ker \tmatrix
    _{j} \subseteq \bigcap_{j=i+1}^{k}\Ker \tmatrix _{j} = \mathrm{Ker}\,
\tilde{\tmatrix}_{i+1}.  
  \end{displaymath}

  Since, for a symmetric positive semidefinite matrix  $\tmatrix $, the image
  $\Im (\tmatrix)$ is the orthogonal complement of $\Ker(\tmatrix )$ we deduce
  \begin{displaymath}
    \Im(\tilde{\tmatrix _{i}})=\Ker(\tilde{\tmatrix _{i}})^{\perp}=\Big(\bigcap_{j=i}^{k}\Ker( \tmatrix
    _{j})\Big)^{\perp}=\sum_{j=i}^{k}\Ker( \tmatrix_{j})^{\perp}=
    \sum_{j=i}^{k}\Im( \tmatrix_{j}) \, .
  \end{displaymath}
This proves the lemma.
\end{proof}
It follows from the Lemma that
there exist vectors $\tilde{\tvector}_i \in \bb{R}^g$ such that
\[ \sum_{j=i}^k \tmatrix_j\tvector_j = \tilde{\tmatrix}_i \tilde{\tvector}_i \, . \]
Replacing $\tmatrix_i$ by $\tilde{\tmatrix}_i$, $x_i$ by $y_i$ and $\tvector_i$
by $\tilde{\tvector}_i$ we are reduced to proving the boundedness of $\abs{\varphi-f}$ on
$\bb{R}_{>\kappa} \times \bb{R}_{\ge 0}^{k-1} \times K$ and of $f$ on
the set
\[ \{ (x_1,\ldots,x_k) \in \bb{R}_{\ge 0}^k : \, x_1 >0 \, , \, x_i \geq 0
\  \textrm{for all} \ i>1,\ \sum _{i=1}^{k}(k-i+1)x_{i}=1 \} \]  
under the extra hypothesis that the matrices $\tmatrix
_{1},\dots,\tmatrix_{k}$ satisfy the flag condition from Definition
\ref{def:1}. Clearly, by the homogeneity of $f$ it is enough to prove the
boundedness of $ f$ on the set
\[ H=\{ (x_1,\ldots,x_k) \in \bb{R}_{\ge 0}^k : \, x_1 >0 \, , \, x_i \geq 0
\  \textrm{for all} \ i>1,\ \sum _{i=1}^{k}x_{i}=1 \}. \]  

From now on we assume the flag condition and we write
$r_i = \mathrm{rk}(\tmatrix_i)$. Then $r=r_1 \ge \cdots \ge r_k\ge 1$.
Thanks to the flag condition, we can assume furthermore that the
basis of $\rr^{g}$ has been chosen 
in such a way that
\begin{equation}
\tmatrix'_i = \begin{pmatrix}[c|c] \tmatrix_i''& 0\\
\hline 
0 & 0\\
\end{pmatrix},
\end{equation}
with $\tmatrix_i''\in S^{++}_{r_{i}}(\rr)$.

 The following is our main estimate.
\begin{lemma} \label{main_estimate} There exists a constant $c$ such
  that for all $1 \leq \alpha, \beta \leq r$ and all $(x_1,\ldots,x_k)
  \in \bb{R}_{>0} \times \bb{R}_{\geq 0}^{k-1}$ we have the following
  bound on the $\alpha, \beta$-entry in the inverse of the $r$-by-$r$     matrix
  $\sum_{i=1}^k x_i \tmatrix'_i$:
  \[ \left|\Big( \sum_{i=1}^k x_i \tmatrix'_i \Big)^{-1}_{\alpha \beta}\right| \leq
  \frac{c}{\displaystyle \sum_{j \colon r_j \geq \min(\alpha,\beta)} x_j} \leq
  \frac{c}{x_1} . \] 
\end{lemma}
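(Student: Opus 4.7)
I would induct on $k$. For $k=1$, $S(x)=x_1A'_1$ with $A'_1$ positive definite on $\bb{R}^r$, so $(S(x)^{-1})_{\alpha\beta}=x_1^{-1}((A'_1)^{-1})_{\alpha\beta}$, which is $O(1/x_1)=O(1/X_{\min(\alpha,\beta)})$. At the inductive step, if $r_k=r_1$, every $A'_i$ is positive definite on $\bb{R}^r$, so $S(x)\ge \lambda\sum_i x_i\, I_r$ for some $\lambda>0$ and the claim follows immediately. The substantive case is $r_k<r_1$; here I would partition $\{1,\dots,r\}=T\sqcup B$ with $T=\{1,\dots,r_k\}$, so that $A'_k$ is supported on $T\times T$.

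With this partition, writing $\Sigma=\sum_{i<k}x_iA'_i$, we have $S_{BB}=\Sigma_{BB}$, $S_{TB}=\Sigma_{TB}$, and $S_{TT}=\Sigma_{TT}+x_kA_k^{(0)}$ with $A_k^{(0)}\in S^{++}_{r_k}(\bb{R})$. The bottom-right blocks $(A'_i)_{BB}$ for $i<k$ inherit the flag condition and have ranks $r_i-r_k$, so induction applies to $S_{BB}$. Likewise, the family $A'_1,\dots,A'_{k-1}$ still satisfies the flag condition on $\bb{R}^r$, so induction applies to $\Sigma$. I would thus obtain entrywise inductive bounds $|(\Sigma^{-1})_{\gamma\delta}|\le c_1/X^{\Sigma}_{\min(\gamma,\delta)}$ and $|(S_{BB}^{-1})_{\alpha'\beta'}|\le c_2/X_{\min(\alpha,\beta)}$ for $\alpha,\beta\in B$, noting that when $\gamma\le r_k$ one has $X^{\Sigma}_\gamma=\sum_{i<k}x_i$, and that for $\alpha,\beta>r_k$ the $B$-side sum involving $r_i-r_k$ recovers the original $X_{\min(\alpha,\beta)}$.

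Writing $S(x)^{-1}$ in the same block form with blocks $\tilde P$, $\tilde Q$, $\tilde R$, the Schur-complement formulas give $\tilde P=(\Sigma_{\rm sc}+x_kA_k^{(0)})^{-1}$ with $\Sigma_{\rm sc}=\Sigma_{TT}-\Sigma_{TB}\Sigma_{BB}^{-1}\Sigma_{BT}$, and $\tilde Q=-\tilde P\,\Sigma_{TB}\Sigma_{BB}^{-1}$, and $\tilde R=S_{BB}^{-1}+S_{BB}^{-1}\Sigma_{BT}\,\tilde P\,\Sigma_{TB}S_{BB}^{-1}$. To bound $\tilde P$ entrywise by $c/\sum_i x_i$, I would use the identity $\Sigma_{\rm sc}^{-1}=(\Sigma^{-1})_{TT}$ together with the inductive bound to control the trace of $\Sigma_{\rm sc}^{-1}$, obtaining $\lambda_{\min}(\Sigma_{\rm sc})\ge c\sum_{i<k}x_i$; adding $x_kA_k^{(0)}\ge c'x_k I_{r_k}$ then yields $\Sigma_{\rm sc}+x_kA_k^{(0)}\ge \lambda\sum_i x_i\,I_{r_k}$. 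For $\tilde Q$ and the correction term of $\tilde R$, the essential auxiliary fact is that the transfer matrix $\Sigma_{TB}\Sigma_{BB}^{-1}$ is entrywise bounded uniformly in $x$. I would obtain this by rewriting $\Sigma_{TB}\Sigma_{BB}^{-1}=-\Sigma_{\rm sc}\,(\Sigma^{-1})_{TB}$ and combining the inductive bound $|(\Sigma^{-1})_{\alpha\delta'}|=O(1/\sum_{i<k}x_i)$ with $|(\Sigma_{\rm sc})_{\gamma\alpha}|=O(\sum_{i<k}x_i)$ (which follows from $0\le\Sigma_{\rm sc}\le\Sigma_{TT}$ and the fact that $(\Sigma_{TT})_{\gamma\gamma}$ is linear in the $x_i$).

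The main obstacle is precisely the off-diagonal $T$-to-$B$ block $\tilde Q$: the naive Cauchy--Schwarz bound $|(S(x)^{-1})_{\alpha\beta}|^2\le(S(x)^{-1})_{\alpha\alpha}(S(x)^{-1})_{\beta\beta}$ yields only $c/\sqrt{X_\alpha X_\beta}$, which is strictly weaker than the required $c/X_{\min(\alpha,\beta)}=c/\max(X_\alpha,X_\beta)$ whenever $X_\alpha$ and $X_\beta$ differ substantially. Forcing the correct asymmetric rate requires the structural Schur identity $\tilde Q=-\tilde P\,\Sigma_{TB}\Sigma_{BB}^{-1}$ together with the uniform-in-$x$ boundedness of $\Sigma_{TB}\Sigma_{BB}^{-1}$; establishing this uniform boundedness is the most delicate point of the argument.
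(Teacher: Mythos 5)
Your proposal is correct but takes a genuinely different route from the paper. The paper's proof is a direct determinant--cofactor estimate: it bounds $\det\bigl(\sum_i x_iA'_i\bigr)$ from below by $\epsilon^r\prod_{j=1}^r\sum_{i:r_i\geq j}x_i$ (by writing $A'_i=(A'_i-\epsilon J_i)+\epsilon J_i$ and using monotonicity of $\det$ on the positive-semidefinite cone), bounds each cofactor from above by a product $\prod_{\alpha'\neq\min(\alpha,\beta)}\sum_{i:r_i\geq\alpha'}x_i$ (using the entrywise bound $|A_{\alpha'\beta'}|\leq c\sum_{i:r_i\geq\max(\alpha',\beta')}x_i$ coming from the flag shape, and crucially exploiting symmetry $A_{\alpha'\sigma(\alpha')}=A_{\sigma(\alpha')\alpha'}$ to pass from $\alpha$ to $\min(\alpha,\beta)$), and then divides. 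Your argument instead inducts on $k$ via nested Schur complements, pulling out the smallest-rank matrix $A'_k$ supported on $T\times T$ and reducing the $T$- and $B$-blocks to smaller instances of the same problem. Both approaches reach the estimate, and both rely on the flag-normalized basis; the paper's version is shorter and more self-contained (just linear algebra of determinants), while yours is more structural and arguably clarifies \emph{why} the off-diagonal entries decay at the rate $1/X_{\min(\alpha,\beta)}$ rather than $1/\sqrt{X_\alpha X_\beta}$: the transfer matrix $\Sigma_{TB}\Sigma_{BB}^{-1}=-\Sigma_{\mathrm{sc}}(\Sigma^{-1})_{TB}$ stays bounded, so the $T$--$B$ coupling is controlled by $\tilde P$ alone. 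Your identification of this as the crux of the matter, and the specific mechanism $\Sigma_{TB}\Sigma_{BB}^{-1}=-\Sigma_{\mathrm{sc}}(\Sigma^{-1})_{TB}$ together with $0\leq\Sigma_{\mathrm{sc}}\leq\Sigma_{TT}$ and the inductive bound on $(\Sigma^{-1})_{TB}$, is a correct and nontrivial insight that the paper's cofactor trick gets for free by symmetry of the permutation sum.

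One small technical gap you should patch: in the inductive step you feed $\bigl((A'_i)_{BB}\bigr)_{i<k}$ back into the lemma with ranks $r_i-r_k$, but if $r_i=r_k$ for some $i<k$ (which the flag condition permits) then $(A'_i)_{BB}=0$, violating the ``rank $\geq 1$'' hypothesis. The fix is to apply the induction only to the subfamily $\{(A'_i)_{BB}:r_i>r_k\}$ --- by monotonicity of the ranks this is $\{i<j\}$ for some cutoff $j$ --- and observe that the discarded variables $x_j,\ldots,x_{k-1}$ do not appear in $S_{BB}$ and do not contribute to $\sum_{i:r_i\geq m}x_i$ for $m>r_k$, so the resulting bound still reads $c/X_{\min(\alpha,\beta)}$ after re-indexing. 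This is routine but needs to be said, since otherwise the inductive hypothesis is not strictly applicable.
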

\begin{proof} This follows immediately from two intermediate results:
\begin{inclaim}
There exists a constant $c_{1}>0$ such that for all $(x_1,\ldots,x_k)
\in \bb{R}_{>0} \times \bb{R}_{\geq 0}^{k-1}$ we have the bound 
\begin{equation*}
\on{det}\Big(\sum_{i=1}^k x_i \tmatrix_i' \Big)\ge c_{1} 
\prod_{j=1}^r \sum_{i:r_i \ge j}x_i  > 0. 
\end{equation*}
\end{inclaim}

To prove this claim, define the $r$-by-$r$ matrix
\begin{equation}
J_i = \begin{pmatrix}[c|c] \on{Id}_{r_i}& 0\\
\hline 
0 & 0\\
\end{pmatrix}. 
\end{equation}
Since $\tmatrix''_{i}$ is positive definite, there exists $\epsilon >0$
such that for all $i$, we have that $\tmatrix_i' - \epsilon J_i$ is
positive semidefinite. Then 
\begin{equation}\label{eq:18}
\sum_i x_i\tmatrix_i'  = \sum_i x_i(\tmatrix_i' - \epsilon J_i) + \sum_i x_i \epsilon J_i.
\end{equation}
The proof of the following lemma is left to the reader.

\begin{lemma} \label{lemm:2}
Let $A$, $B$ be positive semidefinite symmetric $r \times r$
matrices. Then $\det(A + B) \ge \det(A)+ \det(B)$. \hfill $\square$
\end{lemma}

Combining Lemma \ref{lemm:2} with equation \eqref{eq:18}, we deduce
\begin{equation*}
\on{det} \Big(\sum_i x_i\tmatrix_i'\Big)  \ge \on{det}  \Big(\sum_i x_i
\epsilon J_i\Big)  = \epsilon^r \prod_{j=1}^r \sum_{i:r_i \ge j}x_i  >
0 
\end{equation*}
as required. 
The second intermediate result is as follows:
\begin{inclaim}
There exists a constant $c_{2} > 0$ such that for all $1 \le \alpha,
\beta \le r$ and all $(x_1,\ldots,x_k) \in \bb{R}_{>0} \times
\bb{R}_{\geq 0}^{k-1}$ we have the following bound on the cofactors of
the matrix $\sum_{i=1}^k x_i \tmatrix'_i$: 
\begin{equation*}
\left|\on{cof}_{\alpha, \beta}\Big(\sum_{i=1}^k x_i\tmatrix'_i\Big)
\right|\le c_{2} \prod_{\stackrel{\alpha '=1}{\alpha ' \neq
    \min(\alpha, \beta)}}^r \sum_{i : r_i \ge \alpha ' } x_i.
\end{equation*}
\end{inclaim}
To prove this second claim, write $A=\sum_i x_i \tmatrix_i'$. Then there
is a constant $c_3$ such that for $1
\leq \alpha' ,\beta'  \leq r$ one has 
\[ \big| A_{\alpha' ,\beta' } \big| \leq c_{3} \sum_{i : r_i \geq \max(\alpha' ,\beta' )} x_i \leq
c_{3} \sum_{i : r_i \geq \alpha' } x_i . \]
Let $\sigma \colon \{ 1,\ldots,\hat{\alpha},\ldots,r\} \isom 
\{ 1,\ldots,\hat{\beta},\ldots,r\}$ be a bijection (the $\hat{}$ means ``omit''). Then
\[ \prod_{\alpha ' \neq \alpha} \big| A_{\alpha ',\sigma(\alpha
  ')}\big | \leq
c_{3}^{r-1} \prod_{\alpha ' \neq \alpha} \sum_{i : r_i \geq \alpha '} x_i \]
and since $A_{\alpha ',\sigma(\alpha ')}=A_{\sigma(\alpha '),\alpha '}$ we also have
\[ \prod_{\alpha ' \neq \alpha} \big| A_{\alpha ',\sigma(\alpha ')}\big| \leq
c_{3}^{r-1} \prod_{\alpha ' \neq \beta} \sum_{i : r_i \geq \alpha '} x_i \, . \]
Choosing the smaller upper bound of the two we find
\[ \prod_{\alpha ' \neq \alpha}\big| A_{\alpha ',\sigma(\alpha ')} \big| \leq
c_{3}^{r-1} \prod_{\alpha ' \neq \min(\alpha,\beta)} \sum_{i : r_i \geq \alpha '} x_i \]
and hence
\[ \big|\on{cof}_{\alpha, \beta}(A)\big| \leq (r-1)!c_{3}^{r-1} 
\prod_{\alpha ' \neq \min(\alpha,\beta)} \sum_{i : r_i \geq \alpha '} x_i  \, . \]
This proves the second claim and, consequently, Lemma
\ref{main_estimate}.
\end{proof}
\begin{proof}[Proof of Theorem
  \ref{thm:main_technical}~\eqref{main_technical_fbound}] 
From Lemma \ref{main_estimate} we deduce the existence of a constant
$c_{4}>0$ such that, for all $1\le \alpha ,\beta \le r$, 
\begin{equation*} \begin{aligned}
 \left|\left( \sum x_i \tmatrix'_i \tvector'_i \right)_\alpha^t \left( \sum x_i
     \tmatrix'_i\right)^{-1}_{\alpha,\beta} \left( \sum x_i \tmatrix'_i
     \tvector'_i \right)_{\beta} \right| 
 & \leq c_{4} \cdot \frac{\left(\sum_{j \colon r_j \geq \alpha} x_j\right) \left(  \sum_{i
     \colon r_i \geq \beta} x_i \right) }{\sum_{j \colon r_j \geq
     \min(\alpha,\beta)} x_j } \\ 
 & = c_{4} \cdot \sum_{j \colon r_j \geq \max(\alpha,\beta)} x_j 
\end{aligned} 
\end{equation*}
and hence
\[   0 \leq f   = \sum_{\alpha,\beta} \left( \sum x_i \tmatrix'_i \tvector'_i \right)_\alpha^t \left( \sum x_i \tmatrix'_i\right)^{-1}_{\alpha,\beta} \left( \sum x_i \tmatrix'_i \tvector'_i \right)_{\beta} \leq c_{4} \sum_{\alpha,\beta}
\sum_{j \colon r_j \geq \max(\alpha,\beta)} x_j \, . 
\]
This is clearly bounded on $H$. This proves Theorem
\ref{thm:main_technical}~\eqref{main_technical_fbound}. 
\end{proof}
\begin{proof}[Proof of Theorem \ref{thm:main_technical}~\eqref{main_technical_bounddiff}] We start by noting that
\[ P = \begin{pmatrix}[c|c] \sum x_i \tmatrix_i' + \tmatfun_{11} & \tmatfun_{12}\\
\hline 
\tmatfun_{21} & \tmatfun_{22} \\
\end{pmatrix} \, , \]
with $\tmatfun_{22}$ invertible. Moreover, as $P$ is invertible, so is
the Schur complement $\sum x_i \tmatrix'_i + \tmatfun_{11} -
\tmatfun_{12}\tmatfun_{22}^{-1}\tmatfun_{21}  $ of $\tmatfun_{22}$ in $P$. If we put
\[ Q = \left( \sum x_i \tmatrix'_i + \tmatfun_{11} - \tmatfun_{12}\tmatfun_{22}^{-1}\tmatfun_{21}  \right)^{-1} \]
then we get
\begin{equation} \label{oominverse}  P^{-1} = \begin{pmatrix}[c|c] Q& -Q\tmatfun_{12}\tmatfun_{22}^{-1} \\
\hline 
-\tmatfun_{22}^{-1}\tmatfun_{21}Q & \tmatfun_{22}^{-1} + \tmatfun_{22}^{-1}\tmatfun_{21}Q\tmatfun_{12}\tmatfun_{22}^{-1} \\
\end{pmatrix} \, . 
\end{equation}
Write $A = \sum x_i \tmatrix'_i$ and $M= \tmatfun_{11} - \tmatfun_{12}\tmatfun_{22}^{-1}\tmatfun_{21}$ so that $Q = (A+M)^{-1}$. Recall that $A$ is invertible, so that $Q=(\mathrm{Id}_r+A^{-1}M)^{-1}A^{-1}$.
\begin{claim}\label{claim:seriesQ} There exists a $\kappa' > \kappa$
  such that the series 
\[ A^{-1} - A^{-1}MA^{-1} + A^{-1}MA^{-1}MA^{-1} + \cdots + (-1)^m (A^{-1}M)^mA^{-1} + \cdots \]
converges to $Q$ on $\bb{R}_{>\kappa'} \times \bb{R}_{\geq 0}^{k-1} \times K$.
\end{claim}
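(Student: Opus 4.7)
The plan is to recognise the proposed series as a Neumann (geometric) series and to verify convergence by bounding the operator norm of $A^{-1}M$ in terms of $x_{1}$. First I would use the factorisation
\[ Q = (A+M)^{-1} = \bigl(\mathrm{Id}_r + A^{-1}M\bigr)^{-1}A^{-1}, \]
which is legitimate because $A = \sum x_i A_i'$ is invertible on $\bb{R}_{>0}\times \bb{R}_{\ge 0}^{k-1}$ by the flag-condition reductions preceding Lemma~\ref{main_estimate}. Thus it suffices to prove that $\mathrm{Id}_r + A^{-1}M$ is invertible with inverse given by the classical Neumann series $\sum_{m\ge 0}(-1)^m (A^{-1}M)^m$ on the relevant region; right-multiplying by $A^{-1}$ then yields the claimed expansion for $Q$.

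Next I would fix any convenient matrix norm $\|\cdot\|$ on $M_r(\bb{R})$, say the one induced by the max-norm on $\bb{R}^r$. By Lemma~\ref{main_estimate} every entry of $A^{-1}$ is bounded by $c/x_1$ on $\bb{R}_{>0}\times \bb{R}_{\ge 0}^{k-1}$, so $\|A^{-1}\| \le c'/x_1$ for some constant $c'$ depending only on $r$ and $c$. On the other hand, $M(\lambda) = B_{11}(\lambda) - B_{12}(\lambda)B_{22}(\lambda)^{-1}B_{21}(\lambda)$ is a continuous function of $\lambda\in K$ (since $B_{22}$ is positive definite throughout the compact set $K$, so $B_{22}^{-1}$ is continuous there). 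Hence $\|M(\lambda)\|$ is bounded by some constant $c''$ uniformly on $K$. Combining, $\|A^{-1}M\| \le c'c''/x_1$.

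Choose $\kappa' = \max(\kappa, 2c'c'')$; then on $\bb{R}_{>\kappa'}\times \bb{R}_{\ge 0}^{k-1}\times K$ we have $\|A^{-1}M\| \le 1/2$. The standard Neumann series argument then gives that $\mathrm{Id}_r + A^{-1}M$ is invertible with
\[ \bigl(\mathrm{Id}_r + A^{-1}M\bigr)^{-1} = \sum_{m=0}^{\infty} (-1)^m (A^{-1}M)^m, \]
the series converging absolutely (in fact geometrically) in our chosen matrix norm, uniformly on any subset of the region where $x_1$ is bounded below by a constant larger than $\kappa'$. Right-multiplication by $A^{-1}$ commutes with the (absolutely convergent) sum and yields the expansion for $Q$ stated in the claim. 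The only step that requires any care is ensuring that the bound on $\|M\|$ is uniform in $\lambda$, which follows from the compactness of $K$ combined with the continuity of $B$ and the invertibility of $B_{22}$ on $K$; this is the only place where the compactness hypothesis on $K$ is used.
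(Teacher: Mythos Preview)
Your proof is correct and follows essentially the same approach as the paper: both use Lemma~\ref{main_estimate} to bound the entries (equivalently, the norm) of $A^{-1}$ by $c/x_1$, use compactness of $K$ to bound $M$ uniformly, and then choose $\kappa'$ large enough to make the Neumann series converge. The paper works entry-wise while you phrase the bound via an operator norm, but the substance is the same.
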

\begin{proof} The entries of the matrix $M$ are continuous functions
  on the compact set $K$, hence bounded. Let $c$ be the constant of Lemma
  \ref{main_estimate}, choose \[\kappa' > \max( c r^2 \max(M_{\alpha
    \beta}),c,\kappa ) \]  
and put $\epsilon = c r^2\max(M_{\alpha\beta})/\kappa'$. Note that $0
< \epsilon < 1$. Moreover, by Lemma \ref{main_estimate} and the
condition $x_1\geq \kappa'$,
\[ 
\left( \abs{(A^{-1}M)^mA^{-1}} \right)_{\alpha \beta}  
\leq \frac{c}{x_1} \frac{(c\max(M_{\alpha \beta})r^{2})^{m}}{\kappa'^m} < \epsilon^m . \]
It follows that the series converges absolutely. By construction, the limit of the series is $(A+M)^{-1}=Q$ finishing the proof of the claim.
\end{proof}

Write $M_{1}=(\mathrm{Id}_r+MA^{-1})^{-1}$ and
$M_{2}=(\mathrm{Id}_r+A^{-1}M)^{-1}$. Then $Q = A^{-1}M_1 = M_2
A^{-1}$. An argument similar to that of Claim \ref{claim:seriesQ}
shows that the entries of  $M_1$ and $M_2$ are bounded on the set
$\bb{R}_{> \kappa'} \times \bb{R}_{\ge 0}^{k-1} \times K$.  

We deduce from Lemma \ref{main_estimate} that there is a constant
$c_{2}$ such that 
\[\abs{ Q_{\alpha \beta}} \le \frac{c_{2}}{\sum_{j \colon r_j \geq
    \min(\alpha,\beta)}x_j} \] 
on the same set. It follows that
\[ \left| \Big( Q (\sum x_i \tmatrix'_i \tvector'_i) \Big)_\beta\right| \quad \textrm{and}\quad
\left| \Big( (\sum x_i \tmatrix'_i \tvector'_i)^t Q \Big)_\alpha\right| \]
are bounded on $\bb{R}_{>\kappa'} \times \bb{R}_{\ge 0}^{k-1} \times
K$. Moreover, since $Q-A^{-1} = A^{-1}M_3 A^{-1}$ with again $M_3$ having
bounded entries, we deduce that there is another constant $c_{3}$ such
that 
\[ \left|\left( Q- A^{-1} \right)_{\alpha \beta} \right|\le \frac{c_{3}}{ \left( \sum_{j \colon r_j \ge \alpha} x_j \right)\left( \sum_{i \colon r_i \ge \beta} x_i\right) }, \]
and consequently
\[ \left| \left( \sum x_i \tmatrix'_i \tvector'_i \right)^t \left( Q-A^{-1}
  \right) \left( \sum x_i \tmatrix'_i \tvector'_i \right) \right| \]
is bounded. Finally, to prove that $|\varphi-f|$ is bounded we compute
\begin{multline*}
  \varphi-f=  \left( \sum x_i \tmatrix'_i \tvector'_i \right)^t \left(
               Q-A^{-1} \right) \left( \sum x_i \tmatrix'_i \tvector'_i
               \right) 
  + 2 \tvectfun_{1}^{t}Q\left( \sum x_i \tmatrix'_i \tvector'_i
               \right)\\ + \tvectfun_{1}^{t}Q\tvectfun_{1}
   - 2 \tvectfun_{2}^{t}\tmatfun_{22}^{-1}\tmatfun_{21}Q\left( \sum x_i \tmatrix'_i \tvector'_i
               \right)\\
   - 2 \tvectfun_{2}^{t}\tmatfun_{22}^{-1}\tmatfun_{21}Q\tvectfun_1
  + \tvectfun_{2}^{t}(\tmatfun_{22}^{-1} + \tmatfun_{22}^{-1}\tmatfun_{21}Q\tmatfun_{12}\tmatfun_{22}^{-1})\tvectfun_{2}
\end{multline*}
 and we use the previously obtained bounds. This proves Theorem 
\ref{thm:main_technical}~\eqref{main_technical_bounddiff}.
\end{proof}
\begin{proof}[Proof of Theorem \ref{thm:main_technical}~\eqref{main_technical_k=1}]
From now on we assume that $k=1$ so we have $\varphi \colon \bb{R}_{>\kappa}
\times K \to \bb{R}$ and $f \colon \bb{R}_{>0} \to
\bb{R}$. Explicitly,  
\[ \varphi(x_1,\lambda) = (\tmatrix_1x_1\tvector_1 + \tvectfun)^t P^{-1}(\tmatrix_1x_1\tvector_1+\tvectfun) \, \]
with $P = \tmatrix_1x_1 + \tmatfun$, and $f=\tvector_1^t \tmatrix_1 \tvector_1x_1$. Recall
that we write $\varphi_0 = \varphi-f$. Put $w_0 = \tvectfun-\tmatfun\tvector_1$.
\begin{lemma} \label{simplifyg}
We have
\begin{equation*}
\varphi_0(x_1, \lambda) = 2\tvectfun^t \tvector_1 - \tvector_1^t\tmatfun\tvector_1  + w_0^tP^{-1}w_0. 
\end{equation*}
\end{lemma}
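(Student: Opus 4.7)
The proof is a direct algebraic computation. The core idea is to rewrite the vector appearing inside $\varphi$ in terms of $P$ and the "defect'' $w_0$.

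The plan is first to observe the key identity
\[ \tmatrix_1 x_1 \tvector_1 + \tvectfun = (\tmatrix_1 x_1 + \tmatfun)\tvector_1 + (\tvectfun - \tmatfun\tvector_1) = P\tvector_1 + w_0 \, , \]
which simply adds and subtracts $\tmatfun\tvector_1$. Substituting this into the defining expression for $\varphi$ and expanding, using that $P$ (and hence $P^{-1}$) is symmetric so that the cross-terms $\tvector_1^t P^t P^{-1} w_0$ and $w_0^t P^{-1} P \tvector_1$ both simplify to $w_0^t \tvector_1$, one obtains
\[ \varphi(x_1,\lambda) = \tvector_1^t P \tvector_1 + 2 w_0^t \tvector_1 + w_0^t P^{-1} w_0 \, . \]

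Next, I would expand $\tvector_1^t P \tvector_1 = x_1\, \tvector_1^t \tmatrix_1 \tvector_1 + \tvector_1^t \tmatfun \tvector_1 = f + \tvector_1^t \tmatfun \tvector_1$, so that
\[ \varphi_0 = \varphi - f = \tvector_1^t \tmatfun \tvector_1 + 2 w_0^t \tvector_1 + w_0^t P^{-1} w_0 \, . \]
Finally, using the definition of $w_0$ and the symmetry of $\tmatfun$, I would compute
\[ 2 w_0^t \tvector_1 = 2(\tvectfun - \tmatfun\tvector_1)^t \tvector_1 = 2\tvectfun^t \tvector_1 - 2 \tvector_1^t \tmatfun \tvector_1 \, , \]
and combining this with the previous display gives the stated formula
\[ \varphi_0(x_1, \lambda) = 2\tvectfun^t \tvector_1 - \tvector_1^t \tmatfun \tvector_1 + w_0^t P^{-1} w_0 \, . \]

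There is no real obstacle here; the only thing to be careful about is tracking the symmetry of $P$ and $\tmatfun$ when combining the cross terms. The value of the lemma is the resulting form of $\varphi_0$: the dependence on $x_1$ is entirely concentrated in the bounded-looking term $w_0^t P^{-1} w_0$ (the remaining piece $2\tvectfun^t \tvector_1 - \tvector_1^t \tmatfun \tvector_1$ depends only on $\lambda$ and is continuous on the compact $K$), which is the form needed to read off the derivative bounds and the continuous extension to $x_1 = \infty$ claimed in Theorem~\ref{thm:main_technical}.\ref{main_technical_k=1}.
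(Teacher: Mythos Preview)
Your proof is correct and follows essentially the same approach as the paper: both rewrite $A_1 x_1 c_1 + a = P c_1 + w_0$, expand the quadratic form using the symmetry of $P$, and then simplify $2c_1^t w_0 + c_1^t P c_1 - f$ to $2a^t c_1 - c_1^t B c_1$. The only difference is cosmetic ordering of the algebra.
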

\begin{proof} We compute
\begin{equation*} \begin{split}
\varphi_0(x_1, \lambda) & = (\tmatrix_1x_1\tvector_1 + \tvectfun)^t
P^{-1}(\tmatrix_1x_1\tvector_1+\tvectfun) - 
\tvector_1^t \tmatrix_1 \tvector_1x_1 \\
& = (w_{0}+P \tvector_{1})^{t}  P^{-1} (w_{0}+P
\tvector_{1})-\tvector_1^t \tmatrix_1 \tvector_1x_1\\
&= w_{0}^{t}  P^{-1} w_{0}+ 2 \tvector_{1}^{t}w_0+ \tvector_1^t P \tvector_1-\tvector_1^t
\tmatrix_1 \tvector_1x_1\\
&= w_{0}^{t}  P^{-1} w_{0}+2 \tvector_{1}^{t}\tvectfun-\tvector_1^t \tmatfun \tvector_1.
\end{split}
\end{equation*}
\end{proof}

We continue to assume that $k=1$. It follows that $\tmatrix_1'$ is invertible.
\begin{lemma} \label{expandoominverse}
In the above notation and with $k=1$, we have
\begin{equation*}
P^{-1} = \begin{pmatrix}[c|c] 0&0\\
\hline 
0 & \tmatfun_{22}^{-1}\\
\end{pmatrix}  +  \frac{1}{x_1}
\begin{pmatrix}[c|c] \tmatrix'^{-1}_1 & -\tmatrix'^{-1}_1 \tmatfun_{12}\tmatfun_{22}^{-1} \\
\hline 
-\tmatfun_{22}^{-1}\tmatfun_{21} \tmatrix'^{-1}_1 & \tmatfun_{22}^{-1} \tmatfun_{21} \tmatrix'^{-1}_1 \tmatfun_{12} \tmatfun_{22}^{-1} \\
\end{pmatrix}
+ O(x_1^{-2}) 
\end{equation*}
and
\begin{equation*}
P^{-1}\tmatrix_1 = \frac{1}{x_1} \begin{pmatrix}[c|c] 1&0\\
\hline 
-\tmatfun_{22}^{-1}\tmatfun_{21} & 0 \\
\end{pmatrix} + O(x_1^{-2}) 
\end{equation*}
as $x_1 \to \infty$, where the implicit constants are uniform in $K$. 
\end{lemma}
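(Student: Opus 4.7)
The proof will proceed by direct substitution into the block-matrix inverse formula \eqref{oominverse}, using the Neumann series expansion for $Q$ obtained in Claim \ref{claim:seriesQ}. Since $k=1$, the matrix $A = x_1 A_1'$ is by assumption positive definite (hence invertible), and $A^{-1} = x_1^{-1} A_1'^{-1}$. Writing $M = B_{11} - B_{12} B_{22}^{-1} B_{21}$ as before, so that $Q = (A+M)^{-1}$, the series in Claim \ref{claim:seriesQ} reads
\[
Q = \frac{1}{x_1} A_1'^{-1} - \frac{1}{x_1^2} A_1'^{-1} M A_1'^{-1} + \sum_{m \ge 2} \frac{(-1)^m}{x_1^{m+1}} (A_1'^{-1} M)^m A_1'^{-1}.
\]
The entries of $M$ are continuous functions on the compact set $K$, hence bounded; combined with the geometric-series estimate established in the proof of Claim \ref{claim:seriesQ}, this shows that the tail $\sum_{m \ge 1} (-1)^m x_1^{-m-1} (A_1'^{-1} M)^m A_1'^{-1}$ is $O(x_1^{-2})$ as $x_1 \to \infty$, with implicit constants uniform in $\lambda \in K$. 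Thus
\[
Q = \frac{1}{x_1} A_1'^{-1} + O(x_1^{-2}).
\]

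I would then substitute this into formula \eqref{oominverse}. Since $B_{12}, B_{21}, B_{22}^{-1}$ are bounded on $K$, we obtain
\[
-Q B_{12} B_{22}^{-1} = -\frac{1}{x_1} A_1'^{-1} B_{12} B_{22}^{-1} + O(x_1^{-2}),
\]
and similarly for $-B_{22}^{-1} B_{21} Q$ and for the $(2,2)$-block $B_{22}^{-1} + B_{22}^{-1} B_{21} Q B_{12} B_{22}^{-1}$. Collecting terms yields the first claimed expansion of $P^{-1}$, where the constant piece $\begin{psmallmatrix} 0 & 0 \\ 0 & B_{22}^{-1} \end{psmallmatrix}$ comes from the isolated $B_{22}^{-1}$ in the $(2,2)$-block, the $x_1^{-1}$ piece from substituting the leading term of $Q$, and everything else is absorbed into $O(x_1^{-2})$.

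For the second formula, I would multiply block-wise using
\[
P^{-1} A_1 = P^{-1} \begin{pmatrix} A_1' & 0 \\ 0 & 0 \end{pmatrix} = \begin{pmatrix} Q A_1' & 0 \\ -B_{22}^{-1} B_{21} Q A_1' & 0 \end{pmatrix}.
\]
The expansion of $Q$ gives $Q A_1' = x_1^{-1} \mathrm{Id}_r + O(x_1^{-2})$ and $-B_{22}^{-1} B_{21} Q A_1' = -x_1^{-1} B_{22}^{-1} B_{21} + O(x_1^{-2})$, which assembles into the stated formula.

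There is no conceptual obstacle here; the lemma is essentially a bookkeeping exercise built on the block-inverse identity and the already-established Neumann series. The only point requiring a bit of care is the uniformity of the $O(x_1^{-2})$ error in $\lambda \in K$, but this is immediate from the uniform bound on the entries of $M$ (and of $B_{12}, B_{21}, B_{22}^{-1}$) on the compact set $K$ that was used to prove Claim \ref{claim:seriesQ} in the first place.
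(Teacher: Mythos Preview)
Your proposal is correct and follows essentially the same approach as the paper: both derive $Q = x_1^{-1} A_1'^{-1} + O(x_1^{-2})$ from the relation $Q = (\mathrm{Id}_r + A^{-1}M)^{-1}A^{-1}$ with $A = x_1 A_1'$ and $M$ bounded on $K$, then substitute into the block-inverse formula \eqref{oominverse} and multiply by the block form of $A_1$ to obtain the two expansions. The paper is slightly terser, simply citing the form of $Q$ rather than writing out the Neumann series, but the argument is the same.
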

\begin{proof} From equation (\ref{oominverse}) we obtain
\begin{equation*} P^{-1} = 
\begin{pmatrix}[c|c] 0&0\\
\hline 
0 & \tmatfun_{22}^{-1}\\
\end{pmatrix} +  \begin{pmatrix}[c|c] Q & -Q\tmatfun_{12}\tmatfun_{22}^{-1} \\
\hline 
-\tmatfun_{22}^{-1}\tmatfun_{21} Q & \tmatfun_{22}^{-1} \tmatfun_{21} Q \tmatfun_{12} \tmatfun_{22}^{-1} \\
\end{pmatrix} \,  .
\end{equation*}
Also recall that $Q = (\mathrm{Id}_r + A^{-1}M)^{-1} A^{-1}$ with $A =
\tmatrix_1'x_1$ and $M$ bounded. This yields $Q =x_1^{-1}
\tmatrix'^{-1}_1+ O(x_1^{-2})$ as $x_1 \to \infty$. The first estimate
readily follows. Upon recalling that
\[ \tmatrix_1 = \begin{pmatrix}[c|c] \tmatrix_1'&0\\
\hline 
0 & 0 \\
\end{pmatrix} \]
the second estimate also follows. 
\end{proof}
To finish the proof of Theorem \ref{thm:main_technical}.\ref{main_technical_k=1}, note that by combining 
Lemma \ref{simplifyg} and Lemma \ref{expandoominverse} that
\[ \varphi_0(x_1,\lambda) = 2\tvectfun^t \tvector_1-\tvector_1^t\tmatfun\tvector_1 + w_0^t \begin{pmatrix}[c|c] 0&0\\
\hline 
0 & \tmatfun_{22}^{-1}\\
\end{pmatrix} w_0 + O(x_1^{-1})  \]
as $x_1 \to \infty$.
From this it is immediate that $\varphi_0$ extends continuously to a
function from $\ov{\bb{R}_{>\kappa}} \times K$ to $\bb{R}$.
Next, from Lemma \ref{simplifyg} we have
\[  \frac{ \partial \varphi_0}{\partial x_1} = -w_0^t P^{-1}
\tmatrix_1 P^{-1} w_0 , \quad \frac{ \partial^2
  \varphi_0}{\partial x_1^2} =
2w_0^t P^{-1} \tmatrix_1 P^{-1} \tmatrix_1 P^{-1} w_0 . \]
Combining this with Lemma \ref{expandoominverse} we find the estimates
\[ \frac{ \partial \varphi_0}{\partial x_1} = O(x_1^{-2}) , \quad
\frac{ \partial^2 \varphi_0}{\partial x_1^2} = O(x_1^{-3}) ,     \]
completing the proof of Theorem \ref{thm:main_technical}.\ref{main_technical_k=1}.
\end{proof}

\subsection{On the recession function of a normlike function}
\label{propertiesnormlike}

Let $f \colon \bb{R}^k_{>0} \to \bb{R}$ be the recession function
of the normlike function $\varphi$ associated to 
$((\tmatrix_i), (\tvector_i), \tvectfun, \tmatfun)$ as above. The purpose of this section is
to list a number of useful properties of $f$. 

\begin{proposition}\label{prop:convex}
The function $f$ is convex, that is, for all $x, y \in \bb{R}_{>0}^k$
and all $\lambda \in [0,1]$
we have $f(\lambda x + (1-\lambda)y) \leq \lambda f(x) + (1-\lambda)f(y)$.   
\end{proposition}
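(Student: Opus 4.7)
The plan is to exhibit $f$ as the composition of a linear map with a jointly convex ``matrix fractional function,'' from which convexity is automatic. Concretely, recall from equation \eqref{recessionexplicit} that
\[
f(x_1, \ldots, x_k) = V(x)^{t}\, M(x)^{-1}\, V(x),
\qquad
V(x) := \sum_{i=1}^k x_i A_i' c_i',
\quad
M(x) := \sum_{i=1}^k x_i A_i',
\]
and that $M(x) \in S^{++}_r(\bb{R})$ for every $x \in \bb{R}^k_{>0}$. The map $x \mapsto (V(x), M(x))$ is linear from $\bb{R}^k_{>0}$ into $\bb{R}^r \times S^{++}_r(\bb{R})$, so it suffices to prove that the function
\[
g \colon \bb{R}^r \times S^{++}_r(\bb{R}) \to \bb{R}, \qquad g(v,M) = v^{t} M^{-1} v
\]
is jointly convex.

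The key observation is the variational identity
\[
g(v,M) = \sup_{w \in \bb{R}^r} \bigl\{ 2 w^{t} v - w^{t} M w \bigr\},
\]
valid for any $M \in S^{++}_r(\bb{R})$ and any $v \in \bb{R}^r$. Indeed, for fixed $M \in S^{++}_r(\bb{R})$ the function $w \mapsto 2w^{t}v - w^{t}Mw$ is strictly concave in $w$, and its unique critical point $w = M^{-1}v$ yields the value $v^{t}M^{-1}v$.

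For each fixed $w \in \bb{R}^r$ the function $(v,M) \mapsto 2 w^{t} v - w^{t} M w$ is affine, hence convex, on $\bb{R}^r \times S_r(\bb{R})$. Since the pointwise supremum of a family of convex functions is convex, the restriction of $g$ to $\bb{R}^r \times S^{++}_r(\bb{R})$ is convex. Composing with the linear map $x \mapsto (V(x), M(x))$ yields the convexity of $f$ on $\bb{R}^k_{>0}$, as required. There is no substantive obstacle here; the only point to be mindful of is that one stays inside the open cone $S^{++}_r(\bb{R})$ throughout, which is guaranteed because a convex combination of points in $\bb{R}^k_{>0}$ remains in $\bb{R}^k_{>0}$.
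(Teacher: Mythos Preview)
Your proof is correct and follows essentially the same route as the paper: both factor $f$ as a linear map into $\bb{R}^r \times S^{++}_r(\bb{R})$ followed by the matrix fractional function $(v,M)\mapsto v^t M^{-1}v$. The only difference is that the paper cites \cite[Example~3.4, p.~90]{boyd} for the joint convexity of this function, whereas you supply a self-contained proof via the variational identity $v^t M^{-1} v = \sup_w\{2w^t v - w^t M w\}$.
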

\begin{proof}
Example~3.4 on p.~90 of \cite{boyd} states that the function $h_g
\colon \bb{R}^g \times S^{++}_{g}(\bb{R})\ra 
\rr$ given by $
h_g(x,Y) = x^t Y^{-1} x$ is
convex. The function $f \colon \bb{R}_{>0}^k \to \bb{R}$ is the
composition of $h_g$ with the linear map 
\[ \bb{R}_{>0}^k \to \bb{R}^g \times S_g^{++}(\bb{R}) , \quad
(x_1,\ldots,x_k) \mapsto \left( \sum_{i=1}^k x_i \tmatrix_i'\tvector_i'\ ,\,
  \sum_{i=1}^k x_i\tmatrix'_i \right). \]
Since the composition of a linear map followed by a convex function is
again convex, we deduce that $f$ is convex.
\end{proof}
\begin{proposition} \label{extendconvex}
The function $f$ extends to a continuous function $\ov{f} \colon
\bb{R}^k_{\ge 0} \to \bb{R}_{\ge 0}$. The function $\ov{f}$ is
homogeneous of weight one and convex.
\end{proposition}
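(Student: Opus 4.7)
The plan is to construct $\ov{f}$ explicitly via the Moore--Penrose pseudoinverse, obtain convexity and lower semicontinuity from a variational formula, and then deduce continuity using a convexity argument. The main obstacle will be continuity at the boundary of $\bb{R}^k_{\geq 0}$, where the pseudoinverse does not vary continuously in $M$.

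Writing $p(x) = \sum_{i=1}^k x_i A_i' c_i'$ and $M(x) = \sum_{i=1}^k x_i A_i'$, both linear in $x$, one has $f(x) = p(x)^t M(x)^{-1} p(x)$ on $\bb{R}^k_{>0}$. I would define $\ov{f}(x) := p(x)^t M(x)^+ p(x)$ on $\bb{R}^k_{\geq 0}$, where $M^+$ is the Moore--Penrose pseudoinverse. This is well-defined and nonnegative because $p(x) \in \sum_{i\,:\,x_i>0} \Im(A_i') \subseteq \Im(M(x))$ for every $x \in \bb{R}^k_{\geq 0}$. On $\bb{R}^k_{>0}$ the matrix $M(x)$ is positive definite, so $M(x)^+ = M(x)^{-1}$ and $\ov{f}$ agrees with $f$. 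Homogeneity of weight one follows at once from $(\lambda M)^+ = \lambda^{-1} M^+$ for $\lambda > 0$.

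Next, I would record the variational identity
\[
\ov{f}(x) = \sup_{y \in \bb{R}^r}\bigl(2\, p(x)^t y - y^t M(x) y\bigr),
\]
valid precisely because $p(x) \in \Im(M(x))$ (the supremum is then attained at $y = M(x)^+ p(x)$). For each fixed $y$ the expression inside is affine, and hence continuous, in $x$, so $\ov{f}$ is a supremum of affine functions; this yields convexity and lower semicontinuity of $\ov{f}$ on $\bb{R}^k_{\geq 0}$ simultaneously. Combining Theorem \ref{thm:main_technical}.\ref{main_technical_fbound} with homogeneity gives a linear bound $f(x) \leq C \sum_i x_i$ on $\bb{R}^k_{>0}$, and lower semicontinuity transfers this bound to $\ov{f}$ on $\bb{R}^k_{\geq 0}$.

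The crux is upper semicontinuity of $\ov{f}$ at boundary points, which I would prove by the following convexity argument. Let $y_n \to x_0$ in $\bb{R}^k_{\geq 0}$ and set $d_n := \|y_n - x_0\|$. Define
\[
t_n^* := \max_{i \,:\, x_{0,i}>0} \bigl(1 - y_{n,i}/x_{0,i}\bigr)_+,
\]
so that $z_n(t) := x_0 + t^{-1}(y_n - x_0)$ has all coordinates nonnegative iff $t \geq t_n^*$; note $t_n^* \to 0$ as $y_n \to x_0$. Setting $t_n := \max(\sqrt{d_n},\, t_n^*)$ and $z_n := z_n(t_n)$, one has $t_n \to 0$ and $\|z_n - x_0\| = d_n/t_n \leq \sqrt{d_n} \to 0$, so $z_n \to x_0$ in $\bb{R}^k_{\geq 0}$; in particular $\ov{f}(z_n)$ stays bounded by the linear estimate. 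Writing $y_n = (1-t_n)x_0 + t_n z_n$ and applying convexity of $\ov{f}$ gives
\[
\ov{f}(y_n) \leq (1-t_n)\ov{f}(x_0) + t_n \ov{f}(z_n),
\]
so $\limsup_n \ov{f}(y_n) \leq \ov{f}(x_0)$. Combined with lower semicontinuity this yields continuity of $\ov{f}$ on $\bb{R}^k_{\geq 0}$, completing the proof.
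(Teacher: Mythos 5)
Your proof is correct, but takes a genuinely different and more constructive route than the paper. The paper's proof defines $\ov{\ov{f}}$ on the closed simplex $\Delta$ as a lower semicontinuous hull of $f$ (an $\inf$--$\liminf$ over approximating sequences), which is automatically lower semicontinuous and convex, and then appeals to a theorem of Rockafellar (\cite[Theorem~10.2]{Roc70}) which guarantees that a lower semicontinuous convex function on a polytope is continuous; the extension to $\bb{R}^k_{\ge 0}$ is then by homogeneity. Your approach instead produces the extension in closed form via the Moore--Penrose pseudoinverse $\ov{f}(x) = p(x)^t M(x)^+ p(x)$, obtains convexity and lower semicontinuity simultaneously from the variational representation $\ov{f}(x) = \sup_y (2 p(x)^t y - y^t M(x) y)$ as a supremum of affine functions of $x$ (valid precisely because $p(x) \in \Im M(x)$ on $\bb{R}^k_{\ge 0}$), and establishes upper semicontinuity directly by a clean convexity-interpolation argument (writing $y_n = (1-t_n)x_0 + t_n z_n$ with $t_n \to 0$, $z_n \to x_0$ inside $\bb{R}^k_{\ge 0}$, and using boundedness of $\ov{f}(z_n)$). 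What your approach buys is self-containedness --- it replaces the citation of Rockafellar's continuity theorem with a short explicit argument --- plus an explicit formula for $\ov{f}$; and in fact that formula is consistent with the paper's later explicit description of $\ov{f}$ on the closed faces (Proposition~\ref{restrict_to_faces}). The paper's route is a bit shorter at the cost of invoking an external result. Both arguments use Theorem~\ref{thm:main_technical}.\ref{main_technical_fbound} to control $f$ near the boundary, yours via the induced linear bound $f \le C\sum_i x_i$, the paper via boundedness on $\Delta^0$.
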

\begin{proof} By Theorem
  \ref{thm:main_technical}~\eqref{main_technical_fbound} we know that
  the function $f$ is bounded 
  on the open standard simplex $\Delta^0$. Define
\begin{equation*}
\ov{\ov{f}} \colon \Delta \ra \bb{R}_{\ge 0}
\end{equation*}
by the formula
\begin{equation*}
\ov{\ov{f}}(x_1, \ldots, x_k)= \on{inf}_{(p_l)_l \ra (x_1, \ldots, x_k)} \on{liminf}_{l \ra\infty} f(p_l);
\end{equation*}
here the infimum is over sequences in $\Delta^0$ tending to the
point $(x_1, \ldots, x_k)$. This function $\ov{\ov{f}}$ is
well-defined because $f$ is bounded on $\Delta^0$. It follows easily
from the definition of $\ov{\ov{f}}$ that $\ov{\ov{f}}$ is convex
and lower semi-continuous. Since $\Delta$ is a convex polytope, it
follows from \cite[Theorem~10.2]{Roc70} that $\ov{\ov{f}}$ is
continuous. Now 
extend $\ov{\ov{f}}$ to $\bb{R}_{\ge 0}^k \setminus \{0\}$ by
homogeneity. By sending in addition $0$ to $0$ we obtain the required
continuous and convex function $\ov{f} \colon \bb{R}^k_{\ge 0} \to
\bb{R}_{\ge 0}$.   
\end{proof}
We can make the function $\ov{f}$ explicit as follows. Let $I \sub
\{1, \ldots, k\}$ be 
any subset, and set $J = \{1, \ldots, k \} \setminus I$. We
consider the restriction of $\ov{f}$ to the subset $\bb{R}_{> 0}^I
\sub \bb{R}_{\ge 0}^k$ given by setting $x_j$ equal to zero for all $j
\in J$. Let  
\begin{equation*}
r_I = \on{rk}\left( \sum_{i \in I} x_i \tmatrix_i\right) : x_i > 0, 
\end{equation*}
and for $i \in I$ set 
\begin{equation}
\tmatrix_i = \begin{pmatrix}[c|c] \tmatrix_i''& 0\\
\hline 
0 & 0\\
\end{pmatrix}
\end{equation}
where $\tmatrix_i''$ has size $r_I$, and similarly  
\begin{equation*}
\tvector_i = \mat{\tvector_i''\\
\hline
\star}, 
\end{equation*}
where $\tvector_i''$ has length $r_I$.

Note that, if $I  \not =  \emptyset$, then  $r_I \ge 1$. Let $K
\subset \bb{R}_{>0}^J$ be an arbitrary compact subset. Write $x_I =
(x_i)_{i \in I}$ and $x_J = (x_j)_{j \in J}$.
We define the function
\[ f_I \colon \bb{R}_{>0}^I \times K \to \bb{R} \, , \quad
(x_I;x_J) \mapsto f(x_1,\ldots,x_k) \, . \]
Write
\[ \tvectfun(x_J) = \sum_{j \in J} x_j\tmatrix_j' \tvector'_j \, , \quad \tmatfun(x_J) =
\sum_{j \in J} \tmatrix'_j x_j \, , \]
then we see that
\[ f_I(x_I;x_J) = \left( \sum_{i \in I} x_i \tmatrix'_i\tvector'_i + \tvectfun(x_J) \right)^t 
\left( \sum_{i \in I} x_i \tmatrix_i' + \tmatfun(x_J) \right)^{-1} \left(
  \sum_{i \in I} x_i \tmatrix'_i\tvector'_i + \tvectfun(x_J) \right) \] 
and hence by Theorem \ref{thm:main_technical} $f_I$ has a recession function
$\on{rec}( f_I) \colon \bb{R}^I_{>0} \to \bb{R}$ which can be written 
\[ 
\on{rec}( f_I(x_I)) = \left(\sum_{i\in I}x_i\tmatrix''_i\tvector''_i  \right)^t
\left(\sum_{i\in I}x_i\tmatrix''_i \right)^{-1} \left(\sum_{i\in
    I}x_i\tmatrix''_i\tvector''_i  \right),
\]
when $I\not = \emptyset$, and $\on{rec} (f_\emptyset)=0$. 
Note that $\on{rec}( f_I)$ is independent of the choice of $K$. Also
note that, by Theorem \ref{thm:main_technical}, $\abs{f_I-\on{rec}( f_I) }$ is
bounded on $\bb{R}_{>0}^I \times K$. 
\begin{proposition} \label{restrict_to_faces}
Let $I \sub \{1, \ldots, k\}$ be any subset. 
We have
\begin{equation*}
\ov{f}|_{\bb{R}_{>0}^I} = \on{rec}( f_I).
\end{equation*}
\end{proposition}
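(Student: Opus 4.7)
The plan is to combine three ingredients already in place: continuity of $\ov{f}$ on $\bb{R}_{\geq 0}^k$ (Proposition \ref{extendconvex}), weight-one homogeneity of $f$, and the existence of the recession of $f_I$ guaranteed by Theorem \ref{thm:main_technical}.\ref{main_technical_bounddiff} applied to $f_I$ itself as a normlike function.

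The case $I=\emptyset$ is immediate: $\ov{f}(0)=0$ by construction and $\on{rec} f_{\emptyset}=0$ by convention. So assume $I\neq\emptyset$, fix $x_I\in\bb{R}_{>0}^I$, and pick any auxiliary point $x_J^{(0)}\in\bb{R}_{>0}^J$. Take $K=\{x_J^{(0)}\}$ as the (compact, singleton) parameter set used in the definition of $f_I$.

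First I would use continuity of $\ov{f}$, combined with the fact that $\ov{f}=f$ on $\bb{R}_{>0}^k$, to write
\[ \ov{f}(x_I,0_J) \;=\; \lim_{\epsilon\to 0^+} f(x_I,\,\epsilon x_J^{(0)}). \]
Weight-one homogeneity of $f$, which is immediate from (\ref{recessionexplicit}) since $f$ is defined on $\bb{R}_{>0}^k$ by a ratio of a bilinear quadratic form over a linear form, gives
\[ f(x_I,\epsilon x_J^{(0)})\;=\;\epsilon\cdot f(\epsilon^{-1}x_I,\,x_J^{(0)})\;=\;\epsilon\cdot f_I(\epsilon^{-1}x_I;\,x_J^{(0)}). \]
Setting $\mu=1/\epsilon$ and passing to the limit converts the two displays into
\[ \ov{f}(x_I,0_J)\;=\;\lim_{\mu\to\infty}\frac{1}{\mu}\,f_I(\mu x_I;\,x_J^{(0)}). \]

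The last step is to recognise $f_I(\,\cdot\,;x_J^{(0)})$ as a normlike function in the $x_I$ variables, with matrices $(A'_i)_{i\in I}$, vectors $(c'_i)_{i\in I}$, constant auxiliary vector $a_{\ast}=\sum_{j\in J}x_j^{(0)}A'_jc'_j$ and constant auxiliary matrix $B_{\ast}=\sum_{j\in J}x_j^{(0)}A'_j$; the required positivity $\sum_{i\in I}x_iA'_i+B_{\ast}>0$ on $\bb{R}_{>0}^I$ holds because $x_J^{(0)}>0$. Theorem \ref{thm:main_technical}.\ref{main_technical_bounddiff} applied to this normlike function yields both the explicit formula for its recession (which, after the rotation diagonalising the $A'_i$ for $i\in I$, is exactly the formula displayed in the excerpt for $\on{rec} f_I$) and the boundedness of $|f_I-\on{rec} f_I|$ on $\bb{R}_{>\kappa'}^I$ for some $\kappa'$. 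Dividing by $\mu$ and using weight-one homogeneity of $\on{rec} f_I$ then gives
\[ \lim_{\mu\to\infty}\frac{1}{\mu}\,f_I(\mu x_I;\,x_J^{(0)})\;=\;\lim_{\mu\to\infty}\frac{1}{\mu}\,\on{rec} f_I(\mu x_I)\;=\;\on{rec} f_I(x_I), \]
and combining with the previous display finishes the proof. I expect no genuine obstacle; the only point requiring a line of justification is the compatibility between the rotation implicit in the statement of $\on{rec} f_I$ and the one produced by Theorem \ref{thm:main_technical} applied to $f_I$, but this is built into the statement of the theorem.
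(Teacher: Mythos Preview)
Your proof is correct and follows essentially the same route as the paper's own argument: fix an auxiliary point in $\bb{R}_{>0}^J$, use continuity of $\ov{f}$ to express the boundary value as a limit of interior values, convert this limit via weight-one homogeneity of $f$ into a recession limit for $f_I$, and then invoke Theorem \ref{thm:main_technical}.\ref{main_technical_bounddiff} (applied to $f_I$ as a normlike function) to identify that limit with $\on{rec} f_I$. The paper's writeup is slightly terser---it records the bound $|(\on{rec} f_I)(x_I)-f(x_I,c/\lambda)|\le \delta/\lambda$ directly from boundedness of $|f_I-\on{rec} f_I|$ together with homogeneity---but the substance is the same.
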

\begin{proof} When $I=\emptyset$ the equality is trivially true. We
  assume that $I\not=\emptyset$. Choose $c \in \bb{R}^J_{>0}$ and
  $x_I \in \bb{R}^I_{>0}$ arbitrarily. By Theorem
  \ref{thm:main_technical}(\ref{main_technical_bounddiff}) with $K=\{c\}$, there
  exists a constant $\delta >0$ depending on $c$ and $x_I$ such that
  for all $\lambda \in \bb{R}_{>0}$ we have
\[ \abs{(\on{rec} (f_I))(\lambda x_I) - f_I(\lambda x_I;c) } \leq \delta \, . \]
We deduce that for all $\lambda \in \bb{R}_{>0}$ we have
\begin{equation}\label{eq:1}
  \abs{(\on{rec} (f_I))(x_I) -  f  (x_I,\frac{c}{\lambda}) } \leq
  \frac{\delta}{\lambda} . 
\end{equation}

As $\ov{f}$ extends $f$ continuously we have
\[ \lim_{\lambda \to \infty} f(x_I,\frac{c}{\lambda}) = \ov{f}|_{\bb{R}_{>0}^I}(x_I)\]
independently of the choice of $c$. Combining with the bound
\eqref{eq:1}, we find that 
\[ (\on{rec}(f_I))(x_I) = \ov{f}|_{\bb{R}_{>0}^I}(x_I) , \]
as required.
\end{proof}
A special case of interest is when $|I|=1$. For each $1 \le i \le k$, set 
\begin{equation}
\tmatrix_i = \begin{pmatrix}[c|c] \tmatrix_i^{\on{e}}& 0\\
\hline 
0 & 0\\
\end{pmatrix}
\end{equation}
where $\tmatrix_i^{\on{e}}$ has size $r_i = \on{rk} \tmatrix_i$ and hence is positive definite; here $\on{e}$ is short for ``essential''. Similarly set 
\begin{equation*}
\tvector_i = \mat{\tvector_i^{\on{e}}\\
\hline
\star}, 
\end{equation*}
where $\tvector_i^{\on{e}}$ has length $r_i$. Define
\begin{equation*}
\mu_i = \tvector_i^t \tmatrix_i \tvector_i = (\tvector_i^{\on{e}})^t\tmatrix_i^{\on{e}}\tvector_i^{\on{e}} \, . 
\end{equation*}
Then $\mu_i \geq 0$ and we have for all $x_i >0$
\begin{equation*}
\ov{f}(0,\ldots, 0, x_i, 0, \ldots, 0) = (\on{rec} (f_{\{i\}})) (x_i)  
= x_i (\tvector_i^{\on{e}})^t (\tmatrix_i^{\on{e}})^t x_i^{-1}
(\tmatrix_i^{\on{e}})^{-1} x_i\tmatrix_i^{\on{e}} \tvector_i^{\on{e}} = x_i \mu_i
. 
\end{equation*}
In particular, the function $\ov{f}(0,\ldots, 0, x_i, 0, \ldots, 0) $
is homogeneous linear in $x_i$, and 
\[ \mu_i = \ov{f}( 0,\ldots, 0, 1, 0, \ldots, 0  ) \, . \]
We call $\mu_1,\ldots,\mu_k \geq 0$ the \emph{coefficients} associated
to $\varphi$.  

\section{Proofs of the main results}
\label{sec:proof-main-results}

In this section we prove our main results. We will continue to work with the ``diagonal
case'' where we consider the pullback Poincar\'e bundle $\ca{P}_\nu$
associated to a single admissible normal function $\nu$ of our family $\pi \colon Y \to
X$. As was explained at the beginning of Section
\ref{sec:norm-section}, by the biextension property of the Poincar\'e
bundle this is sufficient for the purpose of proving the main results
as stated in the introduction.

\subsection{Singularities of the biextension metric} \label{singularity}

In this section we will prove Theorem \ref{singbiext}. 
\begin{proof}[Proof of Theorem \ref{singbiext}]
Following Theorem \ref{asympt}, take 
\begin{itemize}
\item[-] a small enough $\epsilon >0$,
\item[-] matrices 
\begin{equation*}
\tmatrix_1, \ldots, \tmatrix_k \in S_g(\bb{R}) 
\end{equation*}
of positive rank such that $\Delta^{-1}\tmatrix_i \in M_g(\bb{Z})$ for $i=1,\ldots,k$,
\item[-] vectors 
\begin{equation*}
\tvector_1, \ldots, \tvector_k \in \bb{Q}^g
\end{equation*}
such that $\Delta^{-1}\tmatrix_i \tvector_i \in \bb{Z}^g$ for $i=1,\ldots,k$,
\item[-] bounded holomorphic maps $\alpha\colon  \Delta_\epsilon^n \ra \bb{C}^g$ and $\psi\colon  \Delta_\epsilon^n \ra \bb{P}^g$,
\end{itemize}
such that the multi-valued period mapping 
\begin{equation}
(\Omega,\delta) \colon U_\epsilon \cap X  \ra \mathcal{M} = \bb{H}_g \times \bb{C}^g
\end{equation}
of the variation of mixed Hodge structures $\hh(\nu)$ on $U_\epsilon$ is given by the formula
\begin{equation}
(\underline{q}) = (q_1, \ldots, q_n ) \mapsto \left(
  \sum_{j=1}^k\tmatrix_j\frac{\log q_j}{2\pi i} + \psi(\underline{q}),
  \sum_{j=1}^k\tmatrix_j\tvector_j\frac{\log q_j}{2\pi i} +
  \alpha(\underline{q}) \right)
\end{equation}
(recall that $U_\epsilon$ was defined in Section \ref{sec:statement_of_main}). 
Put $\tvectfun = 2\pi \Im \alpha$, $\tmatfun= 2\pi \Im \psi$, and define $\kappa \in
\bb{R}$ via $\kappa = -\log \epsilon$.
As above define the function $\varphi \colon \bb{R}_{>\kappa}^k \times
\Delta_{\epsilon}^n \to \bb{R}_{\ge 0}$ via 
\[ 
\varphi (x_1,\ldots,x_k; \underline{q} ) =
\left(\sum_{i=1}^kx_i\tmatrix_i\tvector_i + \tvectfun \right)^t
\left(\sum_{i=1}^kx_i\tmatrix_i + \tmatfun \right)^{-1}
\left(\sum_{i=1}^kx_i\tmatrix_i\tvector_i + \tvectfun \right). 
\] 

Choose any $0<\epsilon '<\epsilon $. The restriction of $\varphi$ to
$\bb{R}_{>\kappa}^k \times \overline{\Delta}_{\epsilon'}^n$ is then a
normlike function of dimension $k$. 
Let $f \colon \bb{R}^k_{>0} \to \bb{R}_{\ge 0}$ be the associated
recession function $f = \on{rec}( \varphi)$. Recalling the explicit expression
(\ref{recessionexplicit}) for $f$, the conditions 
\begin{equation*}
\Delta^{-1}\tmatrix_i \in S_g(\bb{R}) \cap M_g(\bb{Z}) \, , \quad
\Delta^{-1}\tmatrix_i \tvector_i \in \bb{Z}^g,
\end{equation*}
for each $i=1,\ldots,k$ imply that the $A_i$ and $A_ic_i$ are themselves integral and that $f$ is the quotient of two
homogeneous polynomials in $\bb{Z}[x_1,\ldots,x_k]$. In particular
$f \in \bb{Q}(x_1,\ldots, x_k)$. It is clear that $f$ is homogeneous
of weight one, and by Proposition \ref{prop:convex} the function $f$ is convex when seen as a real-valued function on $\bb{R}_{>0}^k$.

Let $s$ be an admissible  section of $\ca{P}_\nu$ over
$U_\epsilon \cap X$. Following Corollary \ref{explicitnorm} we have
\begin{equation*}
-\log\aabs{s} = -\log\abs{h} +
\varphi(-\log|q_1|,\ldots,-\log|q_k|;\underline{q})
\end{equation*}
on $U_\epsilon \cap X$ with $h$ a meromorphic function on
$U_\epsilon$, holomorphic on $U_\epsilon \cap X$. As $s$ is locally
generating over $U_\epsilon \cap X$ we have that $h$ has no zeroes or
poles on $U_\epsilon \cap X$. Hence there is a linear form
$l \in \bb{Z}[x_1,\ldots,x_k]$ and a holomorphic map
$u \colon U_\epsilon \to \bb{C}^*$ such that
\[ 
-\log|h| = l(-\log|q_1|,\ldots,-\log|q_k|) + \log|u| 
\]
on $U_\epsilon \cap X$. The image of $\overline
U_{\epsilon '}$ under the map $u$ is compact.

Put $f_s=f+l$ in
$\bb{Q}(x_1,\ldots,x_k)$. Then $f_s$ is again homogeneous of
weight one and convex as a function on $\bb{R}_{>0}^k$. Our claim is
that $f_s$ satisfies all the requirements of Theorem
\ref{singbiext}. We need to show first of all that $-\log \aabs{s} - f_s(-\log|q_1|,\ldots,-\log|q_k|)$ is bounded
on $\overline{U}_{\epsilon'} \cap X$ and extends continuously over
$\overline{U}_{\epsilon'} \setminus D^{\mathrm{sing}}$.

In order to see this, put $\varphi_0=\varphi-f$ on $\bb{R}_{>\kappa}^k \times
\Delta_\epsilon^n$. Then
\[ -\log \aabs{s}(\underline{q}) =
f_s(-\log|q_1|,\ldots,-\log|q_k|) + \log|u| +
\varphi_0(-\log|q_1|,\ldots,-\log|q_k|;\underline{q}) \]
on $U_\epsilon \cap X$. Note that $\log|u|$ extends in a continuous
and bounded manner over the whole of $\overline{U}_{\epsilon'}$. We
are reduced to 
showing that $\varphi_0(-\log|q_1|,\ldots,-\log|q_k|;\underline{q})$ is
bounded on $\overline{U}_{\epsilon'} \cap X$ and extends continuously over
$\overline{U}_{\epsilon'} \setminus D^{\mathrm{sing}}$.

For this we invoke Theorem \ref{thm:main_technical}.\ref{main_technical_bounddiff}. This readily gives the boundedness of
$\varphi_0$ via the map
\[ 
(-\log|\cdot|,\mathrm{id}) \colon (\Delta_\epsilon^*)^k \times
\Delta_\epsilon^n \to \bb{R}_{>\kappa}^k \times \Delta_\epsilon^n . 
\]
Let
$p \in (D \setminus D^\mathrm{sing})\cap \overline{U}_{\epsilon'}$. Up
to a change in the order of the variables, we can assume that the
coordinates of $p$ satisfy $q_1=0$, $q_i \neq 0$ for $i=2,\ldots,
N$.
We take a small polydisk $V_{\epsilon''} \subset
\overline{U}_{\epsilon'}$ of small 
radius $\epsilon''$ with center at $p$ such that $V_{\epsilon''} \cap X$
can be identified with
$\Delta_{\epsilon''}^* \times \Delta_{\epsilon''}^{n-1}$ and hence
$V_{\epsilon''} \cap D$ can be identified with the divisor $q_1=0$ on
$\Delta_{\epsilon''}^n$. Write
\[ \underline{r} = (r_2,\ldots,r_k) =
(-\log|q_2|,\ldots,-\log|q_k|) \]
for $\underline{q} \in V_{\epsilon''}$; then $\underline{r}$ can be
assumed to move through a compact subset $K' \subset
\bb{R}^{k-1}$.
Put $K'' = K' \times \overline{\Delta}_{\epsilon'}^n$. We define functions
$\varphi' \colon \bb{R}_{>\kappa} \times K'' \to \bb{R}_{\geq 0}$ and
$f' \colon \bb{R}_{>\kappa} \times K'' \to \bb{R}_{\ge 0}$ via
\[ \varphi'(x_1;\underline{r},\underline{q}) = \varphi(x_1,\underline{r};\underline{q}) \, , \quad f'(x_1;\underline{r}) = f(x_1,\underline{r}) \, . \]
Then both $\varphi'$, $f'$ are normlike of dimension one. Write
\[ \tmatrix_1 = \begin{pmatrix}[c|c] \tmatrix_1'& 0\\
\hline 
0 & 0\\
\end{pmatrix} \, , \quad \tmatrix_1' = \begin{pmatrix}[c|c] \tmatrix_1''& 0\\
\hline 
0 & 0\\
\end{pmatrix} \, , \]
with $\tmatrix_1'$ positive semidefinite of size $r$, and $\tmatrix_1''=\tmatrix_1^{\on{e}}$ positive definite of size and rank $r_1$. Then it is readily verified that both $\on{rec}( f')$ and $\on{rec} (\varphi')$ are equal to the linear function $x_1\mu_1 = x_1 \tvector_1^t \tmatrix_1 \tvector_1 = \ov{f}(x_1,0,\ldots,0)$. Note that
\[ \varphi_0(-\log|q_1|,\ldots,-\log|q_k|;\underline{q})
= \varphi'(-\log|q_1|;\underline{r},\underline{q}) - f'(-\log|q_1|;\underline{r}) \]
on $V_{\epsilon'} \cap X$. We are done once we show that $ \varphi' - f'$ extends continuously over $\ov{ \bb{R}_{>\kappa} } \times K''$. Following
Theorem \ref{thm:main_technical}.\ref{main_technical_k=1} we have that both
$ \varphi' - \on{rec}( \varphi')$ and $f' - \on{rec} (f')$ extend continuously over $\ov{ \bb{R}_{>\kappa} } \times K''$. As $\on{rec}(\varphi') = \on{rec}( f')$ we find the required extension result. 

The second item of Theorem \ref{singbiext} is clear. As $f_s$ is up to a linear form the recession function of a normlike function we have that $f_s$ is convex, and by Proposition \ref{extendconvex} that $f_s$ extends as a convex, continuous homogeneous weight one function $\ov{f}_s \colon \bb{R}_{\geq 0}^k \to \bb{R}$. This finally proves items (3) and (4) of Theorem \ref{singbiext}.
\end{proof}

\subsection{The Lear extension made explicit}

Let $s$ be an admissible section of $\ca{P} = \ca P_\nu$. We recall that this means that $s$
corresponds to an admissible biextension variation of mixed Hodge structures on $X$. Then $s$
can also be seen as a rational section of the Lear extension
$\Lear{ \ca{P},\aabs{-} }_{\ov{X}}$ of $\ca{P}$ over $\ov{X}$. We can now compute the global
$\bb{Q}$-divisor 
$\on{div}_{\ov{X}}(s)$ that represents 
$\Lear{ \ca{P},\aabs{-} }_{\ov{X}}$. We do
this after a little digression.

Write $U=U_{\epsilon'}$, $V=V_{\epsilon''}$ to reduce notation. 
Let $\mu_1,\ldots,\mu_k \in
\bb{Q}$ be the coefficients of $\varphi$ (see end of Section \ref{propertiesnormlike} for the definition), and $\nu_i = \mathrm{ord}_{D_i} h$
for $i=1,\ldots,k$, and $a_i = \mu_i + \nu_i$. Here $D_i$ is the
divisor on $\ov{X}$ given locally on $U$ by $q_i=0$. We obtain from
the above proof that 
\begin{equation} \label{definepsi} -\log \aabs{s}(\underline{q}) =
  -a_1 \log|q_1| + \psi_1(\underline{q})
\end{equation}
on $V \cap X$ where $\psi_1(\underline{q})$ extends continuously over
$V$. 

We say that $p \in \ov{X}$ is of \emph{depth} $k$ if $p$ is on precisely $k$
of the irreducible divisors $D_i$. The set $\Sigma_k$ of points of
depth $k$ on $\ov{X}$ is a locally closed subset of $\ov{X}$ and for
$k \ge 1$ they yield a stratification of $D = \ov{X} \setminus
X$.
For $p \in \Sigma_k$ take a coordinate neighborhood
$U \subset \ov{X}$ such that $p=(0,\ldots,0)$ and $D \cap X $ is
given by the equation $q_1\cdots q_k=0$. 
Theorem
\ref{singbiext} yields an 
associated homogeneous weight-one function 
$f_{p,s} \in \bb{Q}(x_1,\ldots,x_k)$.

\begin{lemma} \label{locconstant} The map $\Sigma_k \ra \bb{Q}(x_1,\ldots,x_k)$ given by $p
  \mapsto f_{p,s}$ is locally constant. 
\end{lemma}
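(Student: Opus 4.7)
The plan is to fix a point $p_0 \in \Sigma_k \setminus \overline{\on{div}_X(s)}$ and a sufficiently small coordinate neighborhood $U \cong \Delta^n$ of $p_0$ with $\ov U \cap \overline{\on{div}_X(s)} = \emptyset$ and $D \cap U = \{q_1 \cdots q_k=0\}$, and then show that $f_{p,s} = f_{p_0,s}$ for every $p \in U \cap \Sigma_k$. Any such $p$ has coordinates $(0,\dots,0,q_{k+1}(p),\dots,q_n(p))$, so the same $k$ local branches $D_1,\dots,D_k$ of $D$ meet $p$, and any small polydisk $U_p \subset U$ centered at $p$ serves as a coordinate chart defining $f_{p,s}$ with the same variables $x_1,\dots,x_k$ (no relabelling is needed).

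Following the construction in the proof of Theorem~\ref{singbiext}, I would write
\[
f_{p,s} \;=\; f_p \,+\, l_p \quad\in\; \bb{Q}(x_1,\dots,x_k),
\]
where $f_p = \on{rec}\varphi_p$ is the recession function of the normlike function \eqref{eq:def_of_f_eta} constructed from the monodromy matrices $A_j$ and vectors $c_j$ of Theorem~\ref{asympt}, while $l_p(x_1,\dots,x_k)=\sum_{i=1}^k (\on{ord}_{D_i} h_p)\,x_i$ records the vanishing orders along $D_i$ of the meromorphic function $h_p$ representing $s$ in a local holomorphic trivialization of $\ca{P}_\nu$.

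The key observation is that both $f_p$ and $l_p$ depend only on data which is locally constant on $\Sigma_k$ within $U$. The matrices $A_j$ and vectors $c_j$ are determined by the monodromy representations of $\boldsymbol{H}$ and $\boldsymbol{H}(\nu)$ on a small loop around $D_j$ together with a fixed symplectic $\bb{Q}$-basis of the reference fiber; this monodromy is invariant under deformation of the basepoint inside $U$, so both $(A_j)$ and $(c_j)$ are the same whether computed with respect to $U$ or with respect to any subpolydisk $U_p$. For the linear form, the holomorphic frame of $\ca{P}_\nu$ provided by the cocycle $b_{\ca{P}}$ of equation \eqref{eq:5} and the canonical rigidification is global over $U$, so the trivializing section on $U_p$ is the restriction of a single meromorphic function $h$ on $U$. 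Since $s$ is neither zero nor polar on $\ov U$ by hypothesis, $h$ is holomorphic and nowhere vanishing on $U \cap X$, and each $\on{ord}_{D_i} h_p$ equals the order of $h$ along the connected smooth divisor $D_i \cap U$, which is manifestly independent of~$p$.

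The main subtlety I anticipate is verifying that the decomposition $f_{p,s}=f_p+l_p$ is canonical: changing the lift $a_0 \in H(\nu)$ used in the definition of the $c_j$ (by an element of $H$) modifies the $c_j$ but is absorbed by a compensating change in the trivialization, hence in $l_p$, leaving $f_{p,s}$ invariant. This is precisely the content of the uniqueness statement in Theorem~\ref{singbiext}.(2). Combining the three observations, $f_{p,s} = f_{p_0,s}$ for every $p \in U \cap \Sigma_k$, yielding the desired local constancy.
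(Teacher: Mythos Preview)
Your proof is correct but takes a different route from the paper. You argue by decomposing $f_{p,s}=f_p+l_p$ and showing that each piece is separately locally constant: the recession part $f_p$ because the monodromy data $(A_j,c_j)$ are rigid under deformation of the basepoint in $U\cap\Sigma_k$, and the linear part $l_p$ because the meromorphic function $h$ of Corollary~\ref{explicitnorm} is a single function on the full polydisk, so its orders along the connected divisors $D_i\cap U$ are well-defined and independent of~$p$. The paper instead bypasses this decomposition entirely and uses only the \emph{characterization} of $f_{p,s}$: for a nearby point $y=(0,\ldots,0,y_{k+1},\ldots,y_n)$ the shifted coordinates $q_i'=q_i-y_i$ agree with $q_i$ for $i\le k$, so both $f_{p,s}$ and $f_{y,s}$ satisfy the boundedness condition of Theorem~\ref{singbiext}.(1) with respect to the \emph{same} variables $-\log|q_1|,\ldots,-\log|q_k|$; their difference is then a bounded homogeneous weight-one function on $\bb{R}_{>\kappa}^k$, hence zero. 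The paper's argument is shorter and uses nothing about the internal structure of $f_{p,s}$, while yours makes explicit what data actually determines $f_{p,s}$, which is informative but requires more bookkeeping (and your final paragraph about the choice of $a_0$ is a slight detour: once you fix all choices globally on $U$, as you do, the canonicity of the decomposition is irrelevant to the constancy of the sum).
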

\begin{proof} Take $p$, $U$ as above and let
  $y=(0,\ldots,0,y_{k+1},\ldots,y_n)\in U$ be another point of depth
  $k$. Let $q_i'=q_i$ for $i=1,\ldots, k$, $q_i'=q_i-y_i$ for
  $i=k+1,\ldots,n$. Then $\underline{q}'$ are coordinates centered
  around $y$ and we have 
\begin{equation*}
\begin{split}
 -\log \aabs{s} & =  f_{p,s}(-\log|q_1|,\ldots,-\log|q_k|) +
 \psi_p(\underline{q}) \\
 & = f_{y,s}(-\log|q_1'|,\ldots,-\log|q_k'|) + \psi_y(\underline{q}') \\
 & = f_{y,s}(-\log|q_1|,\ldots,-\log|q_k|) + \psi_y(q_i-y_i)
\end{split}
\end{equation*}
on $U \cap X$ with $\psi_p$, $\psi_y$ bounded on $U \cap X$. We find
that $f_{p,s}-f_{y,s}$ is bounded on $\bb{R}_{>\kappa}^k$ and,
being homogeneous of weight one, it vanishes identically.  
\end{proof}

In order to compute the divisor $\on{div}_{\ov X}(s)$ that
represents the Lear
extension of $\ca{P}_\nu$ over $\ov{X}$ we are interested in the behavior
of the function
$
f_s \colon \Sigma_1  \to
\bb{Q}(x)
$ obtained from Lemma \ref{locconstant} by restricting to $k=1$.
Note that $\Sigma_1 = D \setminus D^{\mathrm{sing}}$. Let $D=\bigcup
_{\alpha =1}^{d}D_{\alpha }$ be the decomposition of $D$ into irreducible
components. Take any irreducible component $D_{\alpha }$. Since
$D_{\alpha}\setminus D^{\mathrm{sing}}$ is connected, we deduce from Lemma
\ref{locconstant}  that the function 
$$f_{s,\alpha } \colon D_\alpha \setminus D^{\mathrm{sing}} \to \bb{Q}(x)$$ is constant.
Its value is a homogeneous linear function which we write as $f_{s,\alpha}(x)=a_\alpha x$, with $a_\alpha  \in \bb{Q}$. In this notation we find:
\begin{corollary} \label{learext_explicit}
Let $s$ be a section of $\ca{P}$ corresponding to an admissible biextension variation on $X$. Let
$L=\Lear{ \ca{P},\aabs{-} }_{\ov{X}}$ be the Lear extension of
$\ca{P}$ over $\ov{X}$. Then $L$ is represented by the
$\bb{Q}$-divisor
\[ 
\mathrm{div}_{\ov{X}}(s)= \sum_{\alpha=1}^d a_\alpha D_\alpha \]
on $\ov{X}$. 
\end{corollary}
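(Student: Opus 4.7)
The plan is to verify the claimed formula by computing the order of the rational section $s$ along each irreducible component of the boundary of $\ov{X}$, using the asymptotic description of $-\log\aabs{s}$ provided by Theorem \ref{singbiext} at generic points of the boundary.

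First I would set up the Lear extension on all of $\ov{X}$. The preceding discussion (around equation \eqref{definepsi}) already produced a continuous metric extending $\aabs{-}$ on $\ca{P}_\nu|_{\ov{X}\setminus D^{\mathrm{sing}}}$, and with it an underlying $\bb{Q}$-line bundle on $\ov{X}\setminus D^{\mathrm{sing}}$. Since $\ov{X}$ is smooth and $D^{\mathrm{sing}}$ has codimension at least two, this $\bb{Q}$-line bundle extends uniquely to a $\bb{Q}$-line bundle $L$ on $\ov{X}$ by standard reflexivity (equivalently, Weil $=$ Cartier on the smooth variety $\ov{X}$). This gives the existence of $\Lear{\ca{P}_\nu,\aabs{-}}_{\ov{X}}$, and by the uniqueness clause in the definition the construction is canonical.

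Next I would decompose $\on{div}_{\ov{X}}(s)$. As a $\bb{Q}$-Weil divisor it is determined by its orders along irreducible divisors of $\ov{X}$. Along any prime divisor meeting $X$, the order of $s$ as a rational section of $L$ equals the order of $s$ as a rational section of $\ca{P}_\nu$ on $X$, so these components sum to $\ov{\on{div}_X(s)}$. Thus it remains to show that for each irreducible boundary component $D_\alpha$ of $D$ one has $\on{ord}_{D_\alpha}(s)=a_\alpha$, where $a_\alpha$ is defined through the locally constant function $f_{s,\alpha}$ of Lemma \ref{locconstant}.

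To compute this order along $D_\alpha$, I would pick a point $p \in D_\alpha\setminus(D^{\mathrm{sing}}\cup \ov{\on{div}_X(s)})$ and a coordinate polydisk $V\subset \ov{X}$ centered at $p$ with $V\cap D$ cut out by $q_1=0$, small enough that $\ov V\cap \ov{\on{div}_X(s)}=\emptyset$ and $\ov V\cap D^{\mathrm{sing}}=\emptyset$. Then $s$ is a local generating section of $\ca{P}_\nu$ on $V\cap X$, and Theorem \ref{singbiext} applied at $p$ (with $k=1$) gives
\[
  -\log\aabs{s} \;=\; a_\alpha\cdot(-\log|q_1|) \;+\; b(\underline{q})
\]
on $V\cap X$, with $b$ extending continuously over $V$ since $V$ avoids $D^{\mathrm{sing}}$. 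On the other hand, since $p\notin D^{\mathrm{sing}}$, the Lear extension $L$ is genuinely metrized over $V$: after shrinking $V$ we can choose a local generator $e$ of $L^{\otimes r}$ (for some $r$ clearing denominators) whose logarithmic norm $-\log\aabs{e}$ is bounded and continuous on $V$. Writing $s^{\otimes r}=h\cdot e$ with $h$ a rational function on $V$ regular on $V\cap X$, we get
\[
  -\log|h| \;=\; r\,a_\alpha\cdot(-\log|q_1|) \;+\; (\text{continuous on }V).
\]
Since $h$ is a meromorphic function on the polydisk $V$ regular off $\{q_1=0\}$, this forces $\on{ord}_{D_\alpha}(h)=r\,a_\alpha$, hence $\on{ord}_{D_\alpha}(s)=a_\alpha$ in the $\bb{Q}$-divisorial sense. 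Combining the two contributions yields the asserted equality.

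The main (mild) obstacle is simply the translation between the two descriptions of the Lear extension near $D\setminus D^{\mathrm{sing}}$: on the one hand it is characterized through the continuity of the extended metric on a local generator, and on the other hand through the asymptotic development of $-\log\aabs{s}$ supplied by Theorem \ref{singbiext}. The two agree precisely because both isolate the same leading singular term $a_\alpha(-\log|q_1|)$, with everything else being a bounded continuous correction on $V$; once this comparison is made rigorous the rest is divisorial bookkeeping.
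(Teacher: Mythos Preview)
Your proposal is correct and follows essentially the same route the paper takes: the paper gives no separate proof of this corollary, treating it as an immediate consequence of the preceding discussion, namely equation \eqref{definepsi} and Lemma \ref{locconstant}, which together yield exactly the local asymptotic $-\log\aabs{s}=a_\alpha(-\log|q_1|)+(\text{continuous})$ at a generic point of $D_\alpha$ that you use. Your write-up simply makes explicit the divisorial bookkeeping (codimension-two extension, comparison with a local generator of $L^{\otimes r}$) that the paper leaves to the reader.
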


\subsection{Local integrability}

Our next task is to investigate $\partial \bar{\partial} \log \aabs{s}$
over curves.
\begin{proof}[Proof of Theorem \ref{theorem:local_integrability_over_curves}] 
We use the estimates from Theorem \ref{thm:main_technical}.\ref{main_technical_k=1}. We assume $k=n=1$, but otherwise keep the
notation and assumptions from Section \ref{singularity}. In particular
we have the normlike function $\varphi(x_1,q_1)$ on $\bb{R}_{>\kappa}
\times \Delta_\epsilon$ and the associated recession function
$f=\on{rec}( \varphi)$ on $\bb{R}_{>0}$. Put $\varphi_0=\varphi-f$. 
Put $\varphi_1(q_1)=\varphi_0(-\log|q_1|,q_1)$. By Corollary
\ref{explicitnorm} on $U_\epsilon \cap X$, noting that $f$ is linear,
we have 
\[ -\log \aabs{s}(q_1) = -\log|h|(q_1) + \varphi_1(q_1)  \]
for some meromorphic function $h$. Note that
\[ \partial \varphi_1 = -\frac{1}{2}\frac{\partial \varphi_0}{\partial
  x_1} \frac{d q_1}{q_1}
+ \frac{\partial \varphi_0}{\partial q_1} dq_1 . \]
Here $\partial \varphi_0/\partial q_1$ is smooth and bounded on
$\ov{U_{\epsilon'}}$, and by 
Theorem \ref{thm:main_technical}.\ref{main_technical_k=1} we have a constant $c_1$ such that 
\[ \left| \frac{\partial \varphi_0}{\partial x_1} \right| \leq c_1 \cdot
x_1^{-2} \, . \]
Hence for a smooth vector field $T$ with bounded coefficients we find a constant $c_2$ such that
\[ \abs{ \partial \varphi_1 (T) } \leq c_2 \cdot \frac{1}{(-\log|q_1|)^2|q_1|}  \]
on $U_\epsilon \cap X$.
A similar argument yields
\[ \abs{ \bar{\partial} \varphi_1 (T) } \leq c_2 \cdot \frac{1}{(-\log|q_1|)^2|q_1|}  \]
on $U_\epsilon \cap X$. In particular, there is a constant $c_3$ such that
\begin{displaymath}
  \left\| \int_{\partial U_{\epsilon }}\partial \varphi_1\right\|\le c_3
  \frac{\epsilon }{(\log \epsilon )^2 \epsilon }. 
\end{displaymath}
 Thus the residue $\on{res}_{0}(\partial \varphi_{1})$ of $\partial \varphi_{1}$ at zero is zero.

Next, there exists a smooth $(1,1)$-form $\zeta$ on $U_\epsilon$ such that
\[ \partial \bar{\partial} \varphi_1 = \frac{1}{4} \frac{\partial^2 \varphi_0}{\partial x_1^2} \frac {1}{|q_1|^2} dq_1 d \ov{q_1} + \zeta \, . \]
By Theorem \ref{thm:main_technical}.\ref{main_technical_k=1} we have a constant $c_4$ such that
\[ \left| \frac{\partial^2 \varphi_0}{\partial x_1^2} \right| \leq c_4 \cdot x_1^{-3} \, . \]
Hence for smooth vector fields $T, U$ with bounded coefficients we find a constant $c_5$ and an estimate
\[ \abs{ \partial \bar{\partial} \varphi_1 (T,U) } \leq c_5 \cdot \frac{1}{(-\log|q_1|)^3|q_1|^2}  \]
on $U_\epsilon \cap X$. This shows that $\partial \bar{\partial} \varphi_1$ is locally integrable on $U_\epsilon$. 
\end{proof}

\subsection{Effectivity of the height jump divisor}

In this section we prove Theorem \ref{theorem:effectivity}. 
We continue again with the notation as in Section \ref{singularity}. In particular we have $U=U_\epsilon$, we have $s$ an admissible section of $\mathcal{P}_\nu$ on $U \cap X$, and $f_s \colon \bb{R}_{>0}^k \to \bb{R}$ the associated homogeneous weight one function such that
\[ -\log\|s\| - f_s(-\log|q_1|,\ldots,-\log|q_k|) \]
is bounded on $U \cap X$ and extends continuously over $\ov X \setminus D^\mathrm{sing}$. Moreover $f_s$ extends as a convex homogeneous weight one function $\ov{f}_s \colon \bb{R}_{\geq 0}^k \to \bb{R}$ (cf. Theorem \ref{singbiext}). It is clear that a convex homogeneous weight one function is subadditive, hence we have the estimate
\begin{equation} \label{subadditivity}
\ov{f}_s(x_1, \ldots, x_k) \leq \sum_{i=1}^k \ov{f}_s(0,\ldots,0,x_i,0,\ldots,0) 
\end{equation}
on $\bb{R}^k_{\geq 0}$.

Now let $\ov \phi\colon\ov{C}\ra\ov{X}$ be a map from a smooth curve, sending a point $0$ in $\ov{C}$ to $p=(0,\ldots,0)$, and such that there exists an open neighbourhood $V$ of $0$ in $\ov{C}$ such that $\ov \phi$ maps $V$ into $U$. We also assume that $\ov \phi$ does not map $V$ into $D$. Then $\ov \phi$ is given locally at $0 \in \ov C$ by 
\begin{equation*}
\ov \phi(t) = (t^{m_1}u_1, \ldots, t^{m_i}u_i, \ldots) \, , 
\end{equation*}
where $t$ is a local coordinate on $\ov{C}$ at $0$, the $m_i$ are non-negative integers, and $u_i$ are units. Write $\phi$ for the restriction of $\ov \phi$ to $V \setminus \{0\}$.

\begin{proposition} \label{learpullback}
We have an equality of $\qq$-divisors on $V$:
\begin{equation*}
\on{div}\left( \phi^*s\right)|_V = \ov{f}_s(m_1, \ldots, m_k)\cdot[0] \, ,
\end{equation*}
where $\phi^*s$ is viewed as a rational section of the Lear extension $\Lear{\phi^*(\ca{P}_\nu, \aabs{-})}_V$. 
\end{proposition}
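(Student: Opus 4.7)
The plan is to reduce the claim to a one-variable asymptotic computation by invoking Theorem~\ref{singbiext} on $U$. Shrinking $U$ if necessary we write
\begin{displaymath}
-\log\aabs{s}(\underline q) = f_s(-\log|q_1|,\ldots,-\log|q_k|) + b(\underline q)
\end{displaymath}
on $U\cap X$, with $b$ bounded on $U\cap X$. Pulling back along $\ov\phi$ and using $-\log|q_i\circ\ov\phi(t)| = -m_i\log|t| - \log|u_i(t)|$ gives
\begin{displaymath}
-\log\aabs{\phi^* s}(t) = f_s\bigl(-m_1\log|t| - \log|u_1(t)|,\ldots,-m_k\log|t| - \log|u_k(t)|\bigr) + b(\ov\phi(t))
\end{displaymath}
on $V\setminus\{0\}$. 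Here $b\circ\ov\phi$ is bounded on $V\setminus\{0\}$, and the $\log|u_i(t)|$ are bounded on $V$ because the $u_i$ are units. Since $\ov\phi(0)=p$ lies on every $D_i$, each $m_i$ is a strictly positive integer.

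Next I would extract the logarithmic singularity from the $f_s$-term by exploiting homogeneity. Setting $x=-\log|t|$, weight-one homogeneity of $f_s$ gives
\begin{displaymath}
f_s\bigl(m_1 x - \log|u_1(t)|,\ldots,m_k x - \log|u_k(t)|\bigr) = x\cdot f_s\!\left(m_1 - \tfrac{\log|u_1(t)|}{x},\ldots,m_k - \tfrac{\log|u_k(t)|}{x}\right)
\end{displaymath}
for $t$ sufficiently close to $0$. Because $f_s = P_s/Q$ with $Q$ nowhere vanishing on $\bb{R}_{>0}^k$ and $(m_1,\ldots,m_k)\in\bb{Z}_{>0}^k$, the function $f_s$ is smooth in a neighbourhood of $(m_1,\ldots,m_k)$. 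A first-order Taylor estimate therefore gives $f_s(m_i - \log|u_i(t)|/x)_i = f_s(m_1,\ldots,m_k) + O(1/x)$ as $t\to 0$, and multiplying through by $x$ yields
\begin{displaymath}
f_s\bigl(m_1 x - \log|u_1(t)|,\ldots,m_k x - \log|u_k(t)|\bigr) = -\ov f_s(m_1,\ldots,m_k)\log|t| + O(1),
\end{displaymath}
using that $\ov f_s$ agrees with $f_s$ on the open positive orthant. Combining with the previous display, the function $-\log\aabs{\phi^* s}(t) + \ov f_s(m_1,\ldots,m_k)\log|t|$ is bounded on $V\setminus\{0\}$.

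To conclude, I would invoke the defining property of the Lear extension over the one-dimensional base $V$. Because $s$ is a local generating section of $\ca{P}_\nu$ on $U\cap X$ and $\ov\phi$ maps $V\setminus\{0\}$ into $U\cap X$ (using again that each $m_i\ge 1$ and $u_i(0)\ne 0$), the pullback $\phi^*s$ is a nowhere vanishing holomorphic section of $\phi^*\ca{P}_\nu$ on $V\setminus\{0\}$. Hence its divisor as a rational section of $\Lear{\phi^*(\ca{P}_\nu,\aabs{-})}_V$ is supported at $0$, and the boundedness of $\aabs{\phi^* s}\cdot|t|^{\ov f_s(m_1,\ldots,m_k)}$, both from above and away from zero, identifies the coefficient at $0$ with $\ov f_s(m_1,\ldots,m_k)$, yielding the asserted equality. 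The only substantive point is the Taylor estimate that converts the factor $x\cdot O(1/x)$ into an $O(1)$ error term; this rests on the strict positivity of every $m_i$, which is exactly the hypothesis $\ov\phi(0)\in D_1\cap\cdots\cap D_k$ and places $(m_1,\ldots,m_k)$ in the open cone where $f_s$ is smooth.
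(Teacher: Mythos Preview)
Your proof is correct and follows essentially the same route as the paper: pull back the boundedness statement from Theorem~\ref{singbiext}, identify the leading singularity in $-\log\aabs{\phi^*s}$ as $-\ov f_s(m_1,\ldots,m_k)\log|t|$, and then use the existence of the Lear extension over $V$ to pin down the coefficient of $[0]$. The paper's proof compresses your middle step (the homogeneity/Taylor argument showing $f_s(-m_1\log|t|-\log|u_1|,\ldots)=-\ov f_s(m_1,\ldots,m_k)\log|t|+O(1)$) into the phrase ``we obtain the boundedness by pullback along $\phi$''; your explicit treatment of this via smoothness of $f_s$ at the interior point $(m_1,\ldots,m_k)\in\bb R_{>0}^k$ is exactly the computation the paper leaves to the reader.
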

\begin{proof}  It suffices to show that
\[ -\log \aabs{\phi^*s} \sim -\ov{f}_s(m_1,\ldots,m_k)\log|t| \]
on $V \setminus \{ 0\}$, where $\sim$ denotes that the difference is bounded and extends continuously over $V$. As by Theorem \ref{singbiext}
\[-\log \aabs{s} - f_s(-\log|q_1|,\ldots,-\log|q_k|)   \] 
is bounded on $U \cap X$ we obtain the boundedness by pullback along $\phi$.
The continuous extendability over $V$ then follows from the boundedness combined with the existence of a Lear extension for $\phi^*(\ca{P}_\nu, \aabs{-})$.   
\end{proof}
\begin{proposition} \label{pullbacklear}
We have an equality of divisors on $V$:
\begin{equation*}
\on \phi^*(\on{div}_{\ov{X}}(s)) =  \sum_{i = 1}^k\ov{f}_s(0,\ldots,0,m_i,0,\ldots,0)\cdot [0] \, ,
\end{equation*}
where $s$ is viewed as a rational section of the Lear extension $\Lear{\ca{P}_\nu, \aabs{-}}_U$.
\end{proposition}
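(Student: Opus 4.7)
The plan is to combine Corollary \ref{learext_explicit}, which gives an explicit $\bb{Q}$-divisor representative of $\Lear{\ca{P}_\nu,\aabs{-}}_{\ov X}$, with an identification of its boundary coefficients in terms of $\ov f_s$. First I would note that, since $s$ was chosen to be a local generating section on $U\cap X$, the horizontal part $\ov{\mathrm{div}_X(s)}$ does not meet $U$, so on $V$ only the boundary contribution $\sum_\alpha a_\alpha D_\alpha$ of the decomposition $\mathrm{div}_{\ov X}(s)=\ov{\mathrm{div}_X(s)}+\sum_\alpha a_\alpha D_\alpha$ matters. Hence
\[ \phi^*\bigl(\mathrm{div}_{\ov X}(s)\bigr)\big|_V \;=\; \sum_\alpha a_\alpha\, \phi^*D_\alpha\big|_V. \]

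Next I would identify $a_\alpha$ for the components $D_\alpha$ meeting $U$. Index the local branches of $D\cap U$ as $D_1,\dots,D_k$ with $D_i=\{q_i=0\}$, and let $\alpha(i)$ be the global irreducible component of $D$ containing $D_i$. At a smooth point $y$ of $D_{\alpha(i)}$ lying on $D_i$ (away from $D^{\mathrm{sing}}$ and $\ov{\mathrm{div}_X(s)}$), the function $f_{y,s}$ from Theorem \ref{singbiext} is a homogeneous weight-one function of the single variable $x_i$, hence equals $a_{\alpha(i)}\,x_i$ for some $a_{\alpha(i)}\in\qq$ that by Lemma \ref{locconstant} depends only on $\alpha(i)$. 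Comparing the two expansions
\[ -\log\aabs{s} \;=\; f_{p,s}(-\log|q_1|,\dots,-\log|q_k|)+O(1) \quad\text{near } p, \]
\[ -\log\aabs{s} \;=\; a_{\alpha(i)}\,(-\log|q_i|)+O(1) \quad\text{near } y, \]
on a region where the coordinates $q_j$ with $j\neq i$, $j\leq k$ stay bounded away from $0$, and letting $y\to p$ along $D_i$, the continuous extendability of $\ov f_s$ over $\bb{R}^k_{\ge 0}$ together with its weight-one homogeneity force
\[ a_{\alpha(i)} \;=\; \ov f_s(0,\dots,0,1,0,\dots,0), \]
with the $1$ in the $i$-th slot. (This is essentially Proposition \ref{restrict_to_faces} applied with $|I|=1$, combined with the fact that the linear term coming from $h$ contributes to both sides in the same way.) This identification is the main, and really only, substantive point of the proof.

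Finally, the local pullback of each branch is immediate from the shape of $\ov\phi$: from $\ov\phi(t)=(t^{m_1}u_1(t),\dots,t^{m_k}u_k(t),q_{k+1}(t),\dots,q_n(t))$ with $u_i$ units we get $\phi^*D_i = m_i[0]$, and by additivity $\phi^*D_\alpha|_V=\sum_{i:\,D_i\subset D_\alpha} m_i[0]$. Reindexing the sum by local branches gives
\[ \phi^*\bigl(\mathrm{div}_{\ov X}(s)\bigr)\big|_V \;=\; \sum_{i=1}^k a_{\alpha(i)}\,m_i\,[0] \;=\; \sum_{i=1}^k m_i\,\ov f_s(0,\dots,0,1,0,\dots,0)\,[0], \]
and the homogeneity of $\ov f_s$ converts $m_i\cdot \ov f_s(0,\dots,1,\dots,0)$ into $\ov f_s(0,\dots,m_i,\dots,0)$, yielding the formula of the proposition. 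The only bookkeeping point is that a single global component $D_\alpha$ may meet $U$ in several local branches $D_i$; these are simply summed, so this causes no issue.
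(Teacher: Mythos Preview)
Your proposal is correct and follows the same route as the paper: invoke Corollary~\ref{learext_explicit}, identify the boundary coefficients $a_\alpha$ with $\ov f_s(0,\dots,1,\dots,0)$, and pull back using $\phi^*D_i=m_i[0]$. The paper's own proof is a single sentence (``This follows immediately from Corollary~\ref{learext_explicit}''), leaving the coefficient identification implicit; it is obtained by combining $a_i=\mu_i+\nu_i$ from Section~4.2, $\mu_i=\ov f(0,\dots,1,\dots,0)$ from the end of Section~\ref{propertiesnormlike}, and $f_s=f+l$ from the proof of Theorem~\ref{singbiext}, which is exactly what your parenthetical reference to Proposition~\ref{restrict_to_faces} plus the linear term from $h$ amounts to. One small remark: your phrase ``letting $y\to p$ along $D_i$'' together with Lemma~\ref{locconstant} does not by itself give the identification, since that lemma only asserts constancy \emph{within} each stratum; the rigorous justification is precisely the one you name in the parenthetical, so you might drop the heuristic limiting description and lead with that.
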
 
\begin{proof} This follows immediately from Corollary \ref{learext_explicit}.
\end{proof}
\begin{proof}[Proof of Theorem \ref{theorem:effectivity}]
Combining Propositions \ref{learpullback} and \ref{pullbacklear} one sees that the line bundle
\[ \Lear{\phi^*(\ca{P}_\nu, \aabs{-})}_{\ov{C}}^{\otimes -1} \otimes \ov{\phi}^*\Lear{\ca{P}_\nu, \aabs{-}}_{\ov{X}} \]
has a canonical non-zero rational section, whose divisor is
\[ \left( -\ov{f}_s(m_1, \ldots, m_k) + \sum_{i=1}^k \ov{f}_s(0,\ldots,0,m_i,0,\ldots,0) \right) \cdot[0] \]
on $V$, which is indeed independent of the choice of rational section $s$. This divisor is effective by the subadditivity of $f_s$ expressed by inequality (\ref{subadditivity}). In particular the section is global. 
\end{proof}

\section*{Acknowledgements}

We would like to thank R. Hain and
G. Pearlstein for several discussions and useful hints. We also are
very grateful to the referees 
for a number of important corrections and clarifications. Finally we would
like to thank the hospitality of the Mathematical Institute of Leiden
University and the Instituto de Ciencias Matem\'aticas where the
authors could meet to work on this paper.

The first author has been partially supported by the MINECO research projects
  MTM2013-42135-P and MTM2016-79400-P and ICMAT Severo Ochoa project
  SEV-2015-0554 and 
  the DFG project SFB 1085 ``Higher Invariants''.

\end{document}